\documentclass[12pt, leqno]{article}

\usepackage{amsmath,amsthm}
\usepackage{amssymb,latexsym}
\usepackage{amscd,amstext}
\usepackage{amsfonts}
\usepackage{latexsym}
\usepackage{t1enc}
\usepackage[latin1]{inputenc}
\usepackage[english]{babel}
\usepackage{bm}

\usepackage{graphicx}
\usepackage{cite}
\usepackage{color}

\pagestyle{myheadings}
\markboth{A.M. Caetano  and D.D. Haroske}{Traces for Besov spaces on fractal $h$-sets and dichotomy
  results}

\newtheorem{theorem}{Theorem}[section] 
\newtheorem{lemma}[theorem]{Lemma}
\newtheorem{proposition}[theorem]{Proposition} 
\newtheorem{corollary}[theorem]{Corollary} 
\newtheorem{conjecture}[theorem]{Conjecture} 

\theoremstyle{definition}
\newtheorem{definition}[theorem]{Definition} 
\newtheorem{example}[theorem]{Example}

\theoremstyle{remark}
\newtheorem{remark}[theorem]{Remark} 
\newtheorem{prep}[theorem]{Discussion}

\newcommand{\ignore}[1]{}

\allowdisplaybreaks[1]

\newcommand{\beq}{\begin{eqnarray}}
\newcommand{\eeq}{\end{eqnarray}}
\newcommand{\beqq}{\begin{eqnarray*}}
\newcommand{\eeqq}{\end{eqnarray*}}
\newcommand{\bit}{\begin{itemize}}
\newcommand{\eit}{\end{itemize}}
\newcommand{\ds}{\displaystyle}
\newcommand{\R}{\ensuremath{\mathbb R}}\newcommand{\real}{\R}
\newcommand{\Rn}{\ensuremath{{\mathbb R}^{n}}} \newcommand{\rn}{\Rn}
\newcommand{\N}{{\ensuremath{\mathbb N}}}\newcommand{\nat}{\N}
\newcommand{\No}{\ensuremath{\N_{0}}}\newcommand{\no}{\N_0}
\newcommand{\Z}{\mathbb Z} 
\newcommand{\Zn}{\Z^{n}} \newcommand{\zn}{\Zn}

\newcommand{\Dd}{\mathrm{D}}
\newcommand{\dint}{\;\mathrm{d}}

\newcommand{\B}{\ensuremath{B^s_{p,q}}}

\newcommand{\F}{\ensuremath{F^s_{p,q}}}

\newcommand{\supp}{\ensuremath{\mathrm{supp}\,}}
\newcommand{\bpr}{\begin{proof}}
\newcommand{\epr}{\end{proof}}
\newcommand{\bli}{\begin{list}{}{\labelwidth6mm\leftmargin8mm}}
\newcommand{\eli}{\end{list}}

\newcommand{\non}{\ensuremath{\mathbb{N}^n_0}}
\newcommand{\Non}{\non} 

\newcommand{\comp}{\ensuremath{\mathbb{C}}}

\newcommand{\HH}{\ensuremath{\mathbb H}}

\newcommand{\sul}{\underline{\mathfrak{s}}}
\newcommand{\sol}{\overline{\mathfrak{s}}}
\newcommand{\upind}[1]{\sol\left(\bm{#1}\right)}
\newcommand{\lowind}[1]{\sul\left(\bm{#1}\right)}
\newcommand{\dc}[1]{{#1}_{\Gamma}}
\newcommand{\paren}[1]{\bm{(}#1\bm{)}}
\newcommand{\whole}[1]{\ensuremath\left\lfloor #1 \right\rfloor} 
\newcommand{\wGamma}{\ensuremath{w_{\varkappa, \Gamma}}}
\newcommand{\dist}{{\mathbin{\mathrm{dist}\,}}}
\newcommand{\DRn}{\ensuremath{\mathcal{D}(\rn)}}
\newcommand{\DG}{\ensuremath{\mathcal{D}(\rn\setminus\Gamma)}}
\newcommand{\dicho}{\ensuremath{{\mathbb D}}}



\newcommand{\tr}[1]{\mathrm{tr}\rule[-0.5ex]{0ex}{1.2ex}_{#1}}

\newcommand{\Bsi}{B^{\bm{\sigma}}_{p,q}}
\newcommand{\Btau}{B^{\bm{\tau}}_{p,q}}

\newcommand{\Bsg}{{\mathbb{B}}^{\bm{\sigma}}_{p,q}(\Gamma)}

\newcommand{\Bsih}{B^{\bm{\sigma} \bm{h}^{1/p}\paren{n}^{1/p}}_{p,q}\!(\rn)}

\allowdisplaybreaks[1]
\numberwithin{equation}{section}

\frenchspacing

\textwidth=13.5cm
\textheight=23cm
\parindent=16pt
\topmargin=-0.5cm


\begin{document}

\title{Traces for Besov spaces on fractal $h$-sets and dichotomy results}

\author{Ant{\'o}nio M. Caetano \\  Center for R\&D in Mathematics and Applications,\\ Department of Mathematics, University of Aveiro \\ 3810-193 Aveiro, Portugal \\ Email: acaetano@ua.pt
\and
Dorothee D. Haroske\\ Institute of Mathematics, Friedrich-Schiller-University Jena\\
07737 Jena, Germany\\ Email: dorothee.haroske@uni-jena.de
}

\date{}

\maketitle

\renewcommand{\thefootnote}{}

\footnote{2010 \emph{Mathematics Subject Classification}: Primary 46E35; Secondary 28A80.}

\footnote{\emph{Key words and phrases}: fractal $h$-sets, traces, Besov spaces of generalised smoothness,  density of test functions, dichotomy.}

\footnote{\copyright 2015. Licensed under the CC-BY-NC-ND 4.0 license http://creativecommons.org/licenses/by-nc-nd/4.0/.}

\footnote{Formal publication: http://dx.doi.org/10.4064/sm8171-1-2016.}

\renewcommand{\thefootnote}{\arabic{footnote}}
\setcounter{footnote}{0}


\begin{abstract}
We study the existence of traces of Besov spaces on fractal $h$-sets $\Gamma$ with the special focus laid on necessary assumptions implying this existence, or, in other words, present criteria for the non-existence of traces. In that sense our paper can be regarded as an extension of \cite{bricchi-3} and a continuation of the recent paper \cite{Cae-env-h}. Closely connected with the problem of existence of traces is the notion of {\em dichotomy} in function spaces: We can prove that -- depending on the function space and the set $\Gamma$ -- there occurs an alternative: {\em either} the trace on $\Gamma$ exists, {\em or} smooth functions compactly supported outside $\Gamma$ are dense in the space. This notion was introduced by Triebel in \cite{T-dicho} for the special case of $d$-sets. 
\end{abstract}

\section{Introduction}

The paper is devoted to a detailed study of traces of regular distributions taken on fractal sets. 
Such questions are of particular interest in view of boundary value problems
of elliptic operators, where the solutions belong to some appropriate Besov (or Sobolev) space. One standard method is to start with assertions about traces on hyperplanes and then to transfer these
results to bounded domains with sufficiently smooth boundary
afterwards. Further studies may concern compactness or regularity results,
leading to the investigation of spectral properties. However, when it comes to irregular (or fractal) boundaries, one has to circumvent a lot of difficulties following that way, such that another method turned out to be more appropriate. This approach was proposed by Edmunds and Triebel in connection with smooth boundaries first in \cite{ET} and then extended to fractal $d$-sets in \cite{ET7,ET6,T-Frac}. Later the setting of $d$-sets was extended to $(d,\Psi)$-sets by Moura \cite{moura-diss} and finally to the more general $h$-sets by Bricchi \cite{bricchi-diss}. \\
The idea is rather simple to describe, but the details are much more complicated: at first one determines the trace spaces of certain Besov (or Sobolev) spaces as precisely as possible, studies (compact) embeddings of such spaces into appropriate target spaces together with their entropy and approximation numbers afterwards, and finally applies Carl's or Weyl's inequalities to link eigenvalues and entropy or approximation numbers. If one is in the lucky situation that, on one hand, one has atomic or wavelet decomposition results for the corresponding spaces, and, on the other hand, the irregularity of the fractal can be characterised by its local behaviour (within `small' cubes or balls), then there is some chance to shift all the arguments to appropriate sequence spaces which are usually easier to handle. This is one reason for us to stick to fractal $h$-sets and Besov spaces at the moment. But still the problem is not so simple and little is known so far. Dealing with spac!
 es on $h$-sets we refer to \cite{CL-2,CL-3,K-Z,lopes-diss}, and, probably closest to our approach here, 
 \cite[Chapter 8]{T-F3}. There it turns out that one first needs a sound knowledge about the existence and quality of the corresponding trace spaces. Returning to the first results in that respect in \cite{bricchi-diss}, see also \cite{bricchi-fsdona,bricchi-3,bricchi-4}, we found that the approach can (and should) be extended for later applications. More precisely, for a positive continuous and non-decreasing
function $h : (0,1]\to\real$ (a gauge function) with $\lim_{r\to 0} h(r)=0$, a non-empty compact set $\Gamma\subset\rn$ is called {\em $h$-set} if there exists a finite Radon measure $\mu$ in $\Rn$ with $\supp\mu=\Gamma$ and 
\[
\mu(B(\gamma,r)) \sim h(r),\qquad r\in(0,1],\ \gamma\in\Gamma,
\]
see  also \cite[Chapter~2]{Rogers} and \cite[p.~60]{mattila}. In the special case $h(r)=r^d$, $0< d< n$, $\Gamma$ is called $d$-set (in the sense of \cite[Def.~3.1]{T-Frac}, see also \cite{JW-1,mattila} -- be aware that this is different from \cite{falconer}). Recall that some self-similar fractals are outstanding examples of $d$-sets; for instance,
the usual (middle-third) Cantor set in $\real^1$ is a $d$-set for $d = \ln 
2/\ln 3$, and the Koch curve in $\real^2$ is a $d$-set for $d=\ln 4/\ln
3$. \\
The trace is defined by completion of pointwise traces of $\varphi\in\mathcal{S}(\rn)$, assuming that 
for $0<p<\infty$ we have in addition $\ds \| \varphi\raisebox{-0.5ex}[1ex][1ex]{$\big|_{\Gamma}$}~ | L_p(\Gamma) \| \lesssim \  \| \varphi | B^t_{p,q}(\rn)\|$ for suitable parameters $t\in\real$ and $0<q<\infty$. 
In case of a compact $d$-set $\Gamma$, $0<d<n$, this results in
\begin{equation}
\tr{\Gamma} B^{\frac{n-d}{p}}_{p,q}(\rn) = L_p(\Gamma)\quad \text{if} \quad 0<q\leq\min\{p,1\}
\label{Lp-trace-d}
\end{equation}
and for $s>\frac{n-d}{p}$,
\[
\tr{\Gamma}\B(\rn)  = {\mathbb B}^{s-\frac{n-d}{p}}_{p,q}(\Gamma),
\]
see \cite{T-Frac} with some later additions in \cite{T-func,T-F3}. Here $\B(\rn)$ are the usual Besov spaces defined on $\rn$. In the classical case $d=n-1$, $0<p<\infty$, $0<q\leq\min\{p,1\}$ this reproduces the well-known trace result $\tr{\real^{n-1}} B^{\frac1p}_{p,q}(\rn) = L_p(\real^{n-1})$. \\
In case of $h$-sets $\Gamma$ one needs to consider Besov spaces of generalised smoothness $\Bsi(\rn)$ which naturally extend $\B(\rn)$: instead of the smoothness parameter $s\in\real$ one now admits sequences $\bm{\sigma} = (\sigma_j)_{j\in\no}$ of positive numbers which satisfy $ \sigma_j  \sim  \sigma_{j+1}$, $j\in\no$. Such spaces are special cases of $B^{\bm{\sigma},N}_{p,q}(\rn)$ studied in \cite{F-L} recently, but they are known for a long time: apart from the interpolation approach 
(with a function parameter), see  \cite{merucci,C-F}, there is the rather abstract approach
(approximation by series of entire analytic functions and coverings) developed independently by Gol'dman and Kalyabin in the late 70's
and early 80's of the last century; we refer to the
survey \cite{Kal-Liz} and the appendix \cite{lizorkin} which cover the
extensive (Russian) literature at that time. We shall rely on the Fourier-analytical
approach as presented in \cite{F-L}. It turns out that the classical smoothness $s\in\real$ has to be replaced by certain regularity indices $\upind{\sigma}$, $\lowind{\sigma}$ of $\bm{\sigma}$. In case of $\bm{\sigma}= (2^{js})_j$ the spaces $\Bsi(\rn)$ and $\B(\rn)$ coincide and $\upind{\sigma}=\lowind{\sigma}=s$  in that case. Dealing with traces on $h$-sets $\Gamma$ in a similar way as for $d$-sets, one obtains
\[
\tr{\Gamma} \Btau(\rn)= \Bsg,
\]
where the sequence $\bm{\tau}$ (representing smoothness) depends on $\bm{\sigma}$, $h$ (representing the geometry of $\Gamma$) and the underlying $\rn$; in particular, with $\bm{h} :=
\left(h(2^{-j})\right)_j$, $ \bm{h_{p}} = \left(h(2^{-j})^\frac1p\
2^{j\frac{n}{p}}\right)_j$, the counterpart of \eqref{Lp-trace-d} reads as
\[
\tr{\Gamma} B^{\bm{h_{p}}}_{p,q}(\rn) = L_p(\Gamma), \qquad 0<p<\infty, \quad 0<q\leq\min\{p,1\}. 
\]
These results were already obtained in \cite{bricchi-3} under some additional restrictions. In \cite{Cae-env-h} we studied sufficient conditions for the existence of such traces again (in the course of dealing with growth envelopes, characterising some singularity behaviour) and return to the subject now to obtain `necessary' conditions, or, more precisely, conditions for the non-existence of traces. This problem is closely connected with the so-called {\em dichotomy}: Triebel coined this notion in \cite{T-dicho} for, roughly speaking, the following alternative: the existence of a trace on $\Gamma$ (by completion of pointwise traces) on the one hand, and the density of the set of smooth functions compactly supported outside $\Gamma$, denoted by $\mathcal{D}(\rn\setminus \Gamma)$, on the other hand. Though it is rather obvious that the density of $\mathcal{D}(\rn\setminus \Gamma)$ in some space prevents the existence of a properly defined trace, it is not clear (and, in fact,!
  not true in general) that there is some close connection vice versa. However, in some cases there appears an alternative that {\em either} we have an affirmative answer to the density question {\em or} traces exist. The criterion which case occurs naturally depends on the function spaces and the set $\Gamma$. Our main outcome in this respect, Theorem~\ref{coroleither}, establishes the following: if the $h$-set $\Gamma$ satisfies, in addition, some porosity condition, $\bm{\sigma}$ is an admissible sequence and either $1\leq p<\infty$, $0<q<\infty$, or $0<q\leq p<1$, then
\begin{align*}
\text{\upshape\bfseries either} &\quad \Bsg = \tr{\Gamma} B^{\bm{\sigma} \bm{h_{p}}}_{p,q}\!(\rn)\quad \mbox{exists}\\
\text{\upshape\bfseries or} &\quad  \mathcal{D}(\rn\setminus \Gamma) \quad \mbox{is dense in}\quad
B^{\bm{\sigma} \bm{h_{p}}}_{p,q}\!(\rn)\\
& \quad \mbox{and, therefore,}\quad \tr{\Gamma}B^{\bm{\sigma} \bm{h_{p}}}_{p,q}\!(\rn)\ \mbox{cannot exist}.
\end{align*}
This result is later reformulated in terms of the dichotomy introduced in \cite{T-dicho}. Note that there are further related approaches to trace and dichotomy questions  in \cite{CS-4,CS-trace-dom} for Besov spaces defined by differences, and in \cite{iwona-2,Ha-EE} referring to weighted settings.

The paper is organised as follows. In Section~\ref{sect-1} we collect some fundamentals about $h$-sets and  Besov spaces of generalised smoothness, including their atomic decomposition. In Section~\ref{sect-2} we  turn to trace questions with our main result being Theorem~\ref{coroleither}, before we finally deal with the dichotomy and obtain Corollary~\ref{Coro-2.28}. Throughout the paper we add remarks, discussions and examples to illustrate the (sometimes technically involved) arguments and results.

\section{Preliminaries}
\label{sect-1}
%
\subsection{General notation}
%
%

As usual, $\rn$ denotes the $n$-dimensional real Euclidean space,
$\nat$ the collection of all natural numbers and $\nat_0=\nat\cup
\{0\}$. We use the equivalence `$\sim$' in
$$
a_k \sim b_k \quad \mbox{or} \quad \varphi(x) \sim \psi(x)
$$
always to mean that there are two positive numbers $c_1$ and $c_2$
such that
$$
c_1\,a_k \leq b_k \leq c_2\, a_k \quad \mbox{or} \quad
c_1\,\varphi(x) \leq \psi(x) \leq c_2\,\varphi(x)
$$
for all admitted values of the discrete variable $k$ or the
continuous variable $x$, where $(a_k)_k$, $(b_k)_k$ are
non-negative sequences and $\varphi$, $\psi$ are non-negative
functions. If only one of the inequalities above is meant, we use the symbol $\, \lesssim \,$ instead.
Given two quasi-Banach spaces $X$ and $Y$, we write $X
\hookrightarrow Y$ if $X\subset Y$ and the natural embedding of
$X$ into $Y$ is continuous.\\
All unimportant positive constants will be denoted by $c$,
occasionally with additional subscripts within the same formula.
If not otherwise indicated, $\log$ is always taken with respect to
base 2. For some $\varkappa\in\real$ let 
\begin{equation}
\varkappa_+ = \max\{\varkappa , 0\}\quad\mbox{and}\quad \whole{\varkappa} =
\max\{ k\in\mathbb{Z} : k\leq \varkappa\} \, .   
\label{a+}
\end{equation}
Moreover, for $0<r\leq\infty$
the number $r'$ is given by $\, \frac{1}{r'}:=\left(1-\frac1r\right)_+ $ . 

For convenience,      
let both $\ \dint x\ $ and $\ |\cdot|\ $ stand for the
($n$-dimensional) Lebesgue measure in the sequel. The notation $\ |\cdot|\ $
is also used for the size of an $n$-tuple in $\Non$ and the Euclidean norm in $\Rn$, while $\ |\cdot|_\infty \ $ is
reserved for the corresponding infinity norm.

Given $x\in\rn$ and $r>0$, $B(x,r)$ denotes the closed ball
\begin{equation}
B(x,r) = \left\{y\in\rn : |y-x|\leq r\right\}.
\end{equation} 

%
\subsection{$h$-sets $\Gamma$}

A central concept for us is the theory of so-called $h$-sets and corresponding
measures, we refer to a comprehensive treatment of this concept in
\cite{Rogers}. Certainly one of the most prominent sub-class of these sets are
the famous $d$-sets, see also Example~\ref{ex-h-fnt} below, but it is also
well-known that in many cases more general approaches are necessary,
cf. \cite[p.~60]{mattila}. Here we essentially follow the presentation in
\cite{bricchi-diss,bricchi-fsdona,bricchi-3,bricchi-4}, see also
\cite{mattila} for basic notions and concepts.

\begin{definition}
\bli
\item[{\upshape\bfseries (i)\hfill}]
Let $\HH$ denote the class of all positive continuous and non-decreasing
functions $h : (0,1]\to\real$ {\em (gauge functions)} with $\lim\limits_{r\to 0} h(r)=0$. 
\item[{\upshape\bfseries (ii)\hfill}]
Let $h\in\HH$. A non-empty compact set $\Gamma\subset\rn$ is called
{\em $h$-set} if there exists a finite Radon measure $\mu$ in $\Rn$ with
\begin{align}
\supp\mu & =\Gamma,   \label{supp-h}   \\
\mu(B(\gamma,r))& \sim h(r),\qquad r\in(0,1],\ \gamma\in\Gamma.
\label{hset}
\end{align}
If for a given $h\in\HH$ there exists an $h$-set $\Gamma\subset\rn$, we call
    $h$ a {\em measure function (in~\rn)} and any related measure $\mu$
    with \eqref{supp-h} and~\eqref{hset} will be called {\em $h$-measure (related to $\Gamma$)}.
\eli
\label{defi-hset}
\end{definition}

We quote some results on $h$-sets and give examples afterwards; we refer to
the above-mentioned books and papers for proofs and a more detailed account on
geometric properties of $h$-sets. \\

In view of (ii) the question arises which $h\in\HH$ are measure functions. We
give a necessary condition first, see \cite[Thm.~1.7.6]{bricchi-diss}.

\begin{proposition}     \label{h-measfnt}
Let $h\in\HH$ be a measure function. Then there exists some $c>0$ such that for all $j,k\in\no$,
\begin{equation}    \label{main2}
\frac{h(2^{-k-j})}{h(2^{-j})}\ \geq \ c\ 2^{-kn}.
\end{equation}
\end{proposition}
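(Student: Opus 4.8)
The plan is to exploit the defining property $\mu(B(\gamma,r))\sim h(r)$ of an $h$-measure $\mu$ associated with some $h$-set $\Gamma$, and to compare the mass of a large ball of radius $2^{-j}$ with that of the small balls of radius $2^{-j-k}$ needed to cover it. Since we work in $\rn$, the number of such small balls is controlled by the Euclidean volume ratio $2^{kn}$, and this is precisely where the factor $2^{-kn}$ will enter.

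First I would fix $\gamma\in\Gamma$ and $j,k\in\no$ (the case $k=0$ being trivial) and choose a maximal $2^{-j-k}$-separated subset $\{\gamma_i\}_{i=1}^M$ of $\Gamma\cap B(\gamma,2^{-j})$, i.e. a set of points of $\Gamma\cap B(\gamma,2^{-j})$ that are pairwise at distance strictly greater than $2^{-j-k}$ and that cannot be enlarged. Note $M\geq 1$ since $\gamma$ itself belongs to the set. By maximality every point of $\Gamma\cap B(\gamma,2^{-j})$ lies within distance $2^{-j-k}$ of some $\gamma_i$, so the balls $B(\gamma_i,2^{-j-k})$ cover $\Gamma\cap B(\gamma,2^{-j})$. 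Because $\supp\mu=\Gamma$, subadditivity of $\mu$ together with $\mu(B(\gamma_i,2^{-j-k}))\sim h(2^{-j-k})$ (valid as $\gamma_i\in\Gamma$ and $2^{-j-k}\leq 1$) and $\mu(B(\gamma,2^{-j}))\sim h(2^{-j})$ yields
\[
h(2^{-j})\ \lesssim\ \mu\bigl(B(\gamma,2^{-j})\bigr)\ \leq\ \sum_{i=1}^M \mu\bigl(B(\gamma_i,2^{-j-k})\bigr)\ \lesssim\ M\, h(2^{-j-k}),
\]
with constants independent of $\gamma,j,k$.

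It remains to bound $M$ from above. Since the $\gamma_i$ are pairwise more than $2^{-j-k}$ apart, the closed balls $B(\gamma_i,\tfrac12 2^{-j-k})$ are pairwise disjoint, and they are all contained in $B(\gamma,2^{-j}+\tfrac12 2^{-j-k})\subseteq B(\gamma,2\cdot 2^{-j})$. Comparing $n$-dimensional Lebesgue volumes gives $M\,(\tfrac12 2^{-j-k})^n\leq (2\cdot 2^{-j})^n$, hence $M\leq 4^n\,2^{kn}$. Substituting this into the previous display produces $h(2^{-j})\lesssim 2^{kn}\, h(2^{-j-k})$, which is the claimed inequality $h(2^{-k-j})/h(2^{-j})\geq c\,2^{-kn}$.

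The only point requiring care is the uniformity of the constant $c$: all implicit constants above stem either from the single two-sided estimate $\mu(B(\cdot,r))\sim h(r)$ (whose constants are, by Definition~\ref{defi-hset}, independent of the centre in $\Gamma$ and of $r\in(0,1]$) or from the dimensional volume comparison, so they do not depend on $\gamma$, $j$ or $k$; tracking them through the two displays yields one admissible $c>0$. I expect no real obstacle beyond this bookkeeping, the geometric heart of the argument being the standard packing bound $M\lesssim 2^{kn}$ in $\rn$.
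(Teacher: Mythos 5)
Your proof is correct. The paper itself gives no proof of this proposition --- it only cites Thm.~1.7.6 of Bricchi's thesis --- so there is no in-paper argument to compare against; your covering/packing argument (cover $\Gamma\cap B(\gamma,2^{-j})$ by $\lesssim 2^{kn}$ balls of radius $2^{-j-k}$ centred at points of $\Gamma$, then combine the two-sided estimate $\mu(B(\cdot,r))\sim h(r)$ with $\supp\mu=\Gamma$ and subadditivity) is the standard route to this doubling-type inequality, and all constants are uniform exactly as you say. The only micro-point worth making explicit is the finiteness of $M$: your own volume comparison shows that \emph{any} $2^{-j-k}$-separated subset of $B(\gamma,2^{-j})$ has at most $4^n 2^{kn}$ points, so a maximal such set exists, is finite (e.g.\ by greedy selection), and the rest of the argument goes through verbatim.
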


\begin{remark}\label{rem-doubling}
Note that every $h$-set $\Gamma$ satisfies the {\em doubling condition}, i.e. 
there is some $c>0$ such that
\begin{equation}
\mu(B(\gamma,2r))  \ \leq \ c\ \mu(B(\gamma,r)),\qquad r\in(0,1],\ \gamma\in\Gamma.
\label{doubling}
\end{equation}
Obviously one can regard \eqref{main2} as a refined version of \eqref{doubling}
for the function~$h$, in which the 
dimension~$n$ of the underlying space \rn~ is taken into account (as expected). \\
The complete characterisation for functions
$h\in\HH$ to be measure functions is given in \cite{bricchi-fsdona}: 
There is a compact set $\Gamma$ and a Radon measure $\mu$ with \eqref{supp-h}
and \eqref{hset}
if, and only if, there are constants $0<c_1\leq c_2<\infty$ and a function $h^\ast \in \HH$ such that
\[
c_1 h^\ast(t) \leq h(t) \leq c_2 h^\ast(t),\quad t\in (0,1],
\]
and
\begin{equation}    \label{main2a}
h^\ast (2^{-j}) \leq 2^{kn} h^\ast(2^{-k-j}), \qquad \text{for all}\quad j,k\in\no.
\end{equation}
\end{remark}

\begin{proposition} \label{T:h-sets}
Let $\Gamma$ be  an $h$-set in \rn. 
All $h$-measures
$\mu$ related to $\Gamma$ are equivalent to ${\mathcal H}^h|\Gamma$,
where the latter stands for the restriction to~$\Gamma$ of the generalised
Hausdorff measure with respect to the gauge function~$h$.
\end{proposition}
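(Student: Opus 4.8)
The plan is to reduce the statement to a single two-sided comparison: since $\mathcal{H}^h|\Gamma$ is a fixed object depending only on $h$ and $\Gamma$, it suffices to show that an \emph{arbitrary} $h$-measure $\mu$ related to $\Gamma$ satisfies
\[
c_1\,\mathcal{H}^h(A)\ \le\ \mu(A)\ \le\ c_2\,\mathcal{H}^h(A)\qquad\text{for every Borel set }A\subset\Gamma,
\]
with constants $0<c_1\le c_2<\infty$; the asserted equivalence of all $h$-measures to $\mathcal{H}^h|\Gamma$ (and hence to one another) is then immediate. By \eqref{hset} I fix constants with $C_1\,h(r)\le\mu(B(\gamma,r))\le C_2\,h(r)$ for all $\gamma\in\Gamma$, $r\in(0,1]$, and I record that $h$ is non-decreasing (by $h\in\HH$) and doubling (a consequence of Remark~\ref{rem-doubling}, or directly of \eqref{main2}); both facts will be used.

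For the upper estimate $\mu(A)\le C_2\,\mathcal{H}^h(A)$ I would argue straight from the definition of the generalised Hausdorff measure. Given any cover $A\subset\bigcup_i U_i$ by sets with $\operatorname{diam} U_i\le\delta\le1$, I discard the $U_i$ missing $A$, pick $x_i\in U_i\cap A\subset\Gamma$, and use $U_i\subset B(x_i,\operatorname{diam} U_i)$ together with monotonicity of $h$ to get $\mu(U_i)\le\mu\bigl(B(x_i,\operatorname{diam} U_i)\bigr)\le C_2\,h(\operatorname{diam} U_i)$. Summing, taking the infimum over covers and then $\delta\to0$ yields $\mu(A)\le C_2\,\mathcal{H}^h(A)$. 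This direction uses only the upper bound in \eqref{hset} and no covering lemma.

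The lower estimate $\mu(A)\ge c_1\,\mathcal{H}^h(A)$ is the delicate part and is where I expect the main obstacle. Given $\varepsilon>0$, outer regularity of the finite Radon measure $\mu$ provides an open $V\supset A$ with $\mu(V)\le\mu(A)+\varepsilon$; for $0<\delta\le1$ the family $\{B(x,r):x\in A,\ 0<r<\delta/10,\ B(x,r)\subset V\}$ is then a fine cover of $A$. A covering lemma of Vitali/Besicovitch type extracts a countable disjoint subfamily $\{B(x_i,r_i)\}$ with $x_i\in\Gamma$, $B(x_i,r_i)\subset V$ and $A\subset\bigcup_i B(x_i,5r_i)$. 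Since $\operatorname{diam} B(x_i,5r_i)=10r_i<\delta$, the lower bound in \eqref{hset}, the doubling of $h$ (which absorbs the enlargement, $h(10r_i)\le c\,h(r_i)$) and the disjointness give
\[
\mathcal{H}^h_\delta(A)\ \le\ \sum_i h(10 r_i)\ \le\ c\sum_i h(r_i)\ \le\ \frac{c}{C_1}\sum_i\mu\bigl(B(x_i,r_i)\bigr)\ \le\ \frac{c}{C_1}\,\mu(V)\ \le\ \frac{c}{C_1}\bigl(\mu(A)+\varepsilon\bigr),
\]
where $\mathcal{H}^h_\delta$ is the size-$\delta$ approximating content. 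Letting first $\delta\to0$ and then $\varepsilon\to0$ yields $\mathcal{H}^h(A)\le(c/C_1)\,\mu(A)$, i.e. the lower bound with $c_1=C_1/c$.

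Finiteness and non-triviality of $\mathcal{H}^h|\Gamma$ then come for free: $\mathcal{H}^h(\Gamma)\le(c/C_1)\,\mu(\Gamma)<\infty$ since $\mu$ is finite, while $0<\mu(\Gamma)\le C_2\,\mathcal{H}^h(\Gamma)$ forces $\mathcal{H}^h(\Gamma)>0$, so $\mathcal{H}^h|\Gamma$ is indeed a finite Radon measure and the two displayed inequalities combine into the claimed equivalence. The only genuine difficulty is the selection and application of the covering lemma in the lower estimate, together with the bookkeeping that reconciles the radius-based balls it produces with the diameter-based gauge $h(\operatorname{diam}\,\cdot\,)$ entering $\mathcal{H}^h$; both enlargement factors are controlled precisely because $h$ is doubling. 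A self-contained treatment along these lines is standard in geometric measure theory (cf. \cite{mattila,Rogers}).
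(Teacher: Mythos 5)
Your proposal is correct, but note that the paper itself contains no proof of this proposition: Remark~\ref{rem-dim} simply defers it to \cite[Thm.~1.7.6]{bricchi-diss}, so there is no internal argument to compare against. What you have written is the standard self-contained proof (essentially the mass distribution principle in both directions, as in \cite{mattila}): the upper bound $\mu(A)\lesssim \mathcal{H}^h(A)$ by passing from an arbitrary $\delta$-cover to balls centred in $A\subset\Gamma$ and invoking the upper half of \eqref{hset}, and the lower bound via outer regularity of the finite Radon measure $\mu$, the $5r$-covering lemma applied to the fine family of balls inside the open set $V$, disjointness, and the lower half of \eqref{hset}. Both halves are sound, including the reduction to a comparison with the fixed reference measure $\mathcal{H}^h|\Gamma$. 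Two small points deserve explicit care if this were written out in full: (a) covering sets of diameter zero in the upper estimate are harmless only because $\mu$ is non-atomic, which itself follows from \eqref{hset} and $h(r)\to 0$; (b) the doubling of $h$ that absorbs the factor $10$ in $h(10r_i)\le c\,h(r_i)$ is stated in the paper only at dyadic scales (via \eqref{main2}, or through \eqref{doubling} combined with \eqref{hset}), so one needs the monotonicity of $h$ to interpolate to general radii and to handle radii near the endpoint of the interval $(0,1]$ --- your choice $r_i<\delta/10\le 1/10$ keeps all evaluations of $h$ inside its domain, which is exactly the right bookkeeping. With these details filled in, your argument is a complete and correct proof of the proposition, and (judging from the cited source) it is also essentially the route taken in Bricchi's thesis.
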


\begin{remark}\label{rem-dim}
A proof of this result is given in \cite[Thm.~1.7.6]{bricchi-diss}. 
Concerning the theory of generalised 
Hausdorff measures ${\mathcal H}^h$ we refer to \cite[Chapter~2]{Rogers} and
\cite[p.~60]{mattila}; in particular, 
if $h(r)=r^d$, then ${\mathcal H}^h$ coincides 
with the usual $d$-dimensional Hausdorff measure.
\end{remark}

We recall a description of measure functions and explicate a
few examples afterwards.

\begin{proposition}~Let $n\in\nat$.\quad\\[-4ex]
\bli
\item[{\upshape\bfseries (i)\hfill}]
Let $\xi:(0,1]\to[0,n]$ be a measurable function. Then the function
\begin{equation}\label{h-xi}
h(r)=\exp\Bigl\{-\int_r^1\xi(s)\,\frac{\dint s}{s}\Bigr\},\qquad r\in(0,1]
\end{equation}
is a measure function.
\item[{\upshape\bfseries (ii)\hfill}]
Conversely, let $h$ be a given measure function. Then for any $\varepsilon>0$
there exists a measurable function $\xi:(0,1]\to [-\varepsilon, n+\varepsilon]$
  such that
\begin{equation}\label{h-xi-2}
h(r) \sim \exp\Bigl\{-\int_r^1\xi(s)\,\frac{\dint s}{s}\Bigr\},\qquad r\in(0,1].
\end{equation}
\eli
\label{measure-xi}
\end{proposition}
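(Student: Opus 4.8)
The plan is to treat the two implications separately, in both cases reducing matters to the dyadic comparison conditions already recorded in Proposition~\ref{h-measfnt} and Remark~\ref{rem-doubling} rather than to the geometric definition \eqref{supp-h}--\eqref{hset}.

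For part (i) I would first check that $h\in\HH$. Positivity is clear, continuity follows because $r\mapsto\int_r^1\xi(s)\frac{\dint s}{s}$ is locally Lipschitz on $(0,1]$ (the integrand is bounded by $n/a$ on any $[a,1]$), and monotonicity of $h$ is immediate from $\xi\geq 0$, which makes that integral non-increasing in $r$. The only delicate point of membership in $\HH$ is the requirement $\lim_{r\to 0}h(r)=0$, which amounts to $\int_0^1\xi(s)\frac{\dint s}{s}=\infty$; I would record this divergence as the standing condition under which the statement is to be read, since otherwise $h$ tends to a positive constant and cannot lie in $\HH$. The decisive step is then to verify the sufficient condition \eqref{main2a} with the choice $h^\ast=h$: for $j,k\in\no$ one has $\log_2\frac{h(2^{-j})}{h(2^{-k-j})}=\frac{1}{\ln 2}\int_{2^{-k-j}}^{2^{-j}}\xi(s)\frac{\dint s}{s}\leq \frac{n}{\ln2}\int_{2^{-k-j}}^{2^{-j}}\frac{\dint s}{s}=kn$, using only $\xi\leq n$, so \eqref{main2a} holds with $c_1=c_2=1$. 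By the characterisation quoted in Remark~\ref{rem-doubling}, $h$ is a measure function.

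For part (ii) the natural candidate for $\xi$ is the logarithmic derivative $r\,h'(r)/h(r)$, but since $h$ is merely continuous and non-decreasing I would instead use a discretised, block-averaged version. Fix $\varepsilon>0$, let $c>0$ be the constant of Proposition~\ref{h-measfnt}, and choose $L\in\nat$ so large that $\frac{1}{L}\log_2\frac1c<\varepsilon$. Define $\xi$ to be constant on each dyadic block $(2^{-(m+1)L},2^{-mL}]$, $m\in\no$, with value $\xi_m:=\frac{1}{L}\log_2\frac{h(2^{-mL})}{h(2^{-(m+1)L})}$. Then $\int_{2^{-(m+1)L}}^{2^{-mL}}\xi(s)\frac{\dint s}{s}=\ln\frac{h(2^{-mL})}{h(2^{-(m+1)L})}$, so telescoping gives $\exp\{-\int_{2^{-mL}}^1\xi(s)\frac{\dint s}{s}\}=h(2^{-mL})/h(1)$ at the block endpoints. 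Monotonicity of $h$ yields $\xi_m\geq 0$, while Proposition~\ref{h-measfnt} applied with $(j,k)=(mL,L)$ gives $\frac{h(2^{-mL})}{h(2^{-(m+1)L})}\leq \frac1c\,2^{Ln}$, hence $\xi_m\leq n+\frac1L\log_2\frac1c<n+\varepsilon$; thus $\xi$ maps into $[0,n+\varepsilon]\subset[-\varepsilon,n+\varepsilon]$. For general $r\in(2^{-(m+1)L},2^{-mL}]$ the extra contribution $\int_r^{2^{-mL}}\xi(s)\frac{\dint s}{s}=\xi_m\ln\frac{2^{-mL}}{r}$ lies in $[0,(n+\varepsilon)L\ln2]$, so $\exp\{-\int_r^1\xi(s)\frac{\dint s}{s}\}$ differs from $h(2^{-mL})/h(1)$ only by a fixed factor depending on $L$; combined with $h(r)\sim h(2^{-mL})$ inside the block (the doubling-type estimate $h(2^{-mL})\leq \frac1c 2^{Ln} h(2^{-(m+1)L})$ together with monotonicity) this gives the claimed equivalence $h(r)\sim\exp\{-\int_r^1\xi(s)\frac{\dint s}{s}\}$.

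The main obstacle is the sharp upper bound $n+\varepsilon$ in part (ii): a single dyadic step only yields $\xi\leq n+\log_2(1/c)$, and it is precisely the block-averaging over $L$ scales, with $L$ chosen after $\varepsilon$, that dilutes the additive constant $\log_2(1/c)$ of Proposition~\ref{h-measfnt} into an arbitrarily small error. A secondary point requiring care is the limit condition in part (i), which is not automatic for arbitrary $\xi$ and must be read as a divergence hypothesis on $\int_0^1\xi(s)\frac{\dint s}{s}$.
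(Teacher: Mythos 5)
Your proposal is correct, but note that the paper itself contains no proof of Proposition~\ref{measure-xi}: it is quoted from \cite[Thm.~3.7]{bricchi-3}, with an alternative reference to \cite[pp.~74]{BGT}. So your argument is, by necessity, a different route, and what it buys is a derivation self-contained relative to the facts the paper does record: the necessary condition \eqref{main2} of Proposition~\ref{h-measfnt} and the sufficient characterisation \eqref{main2a} quoted in Remark~\ref{rem-doubling}. For part (i), taking $h^\ast=h$ and verifying \eqref{main2a} via $\int_{2^{-k-j}}^{2^{-j}}\xi(s)\,\frac{\dint s}{s}\leq kn\ln 2$ is exactly the device the paper itself uses later for the example \eqref{h-log-ex}, so this part is fully in the paper's spirit. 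For part (ii), your block-averaging construction is sound: the telescoping identity at the endpoints $2^{-mL}$, the bound $\xi_m\leq n+\frac1L\log_2\frac1c<n+\varepsilon$ from \eqref{main2}, and the two within-block equivalences (both controlled by constants depending only on $L$, $n$, $\varepsilon$, $c$, $h(1)$) all check out; choosing $L$ after $\varepsilon$ is precisely the mechanism that dilutes the additive constant $\log_2(1/c)$ into the tolerance $\varepsilon$, which also explains why the statement needs the enlarged range $[-\varepsilon,n+\varepsilon]$ rather than $[0,n]$. In fact, since monotonicity of $h$ makes your block averages non-negative, you prove the slightly stronger statement with range $[0,n+\varepsilon]$.

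Two caveats, both of which you handled but which deserve emphasis. First, part (i) as literally stated fails for, say, $\xi\equiv 0$: then $h\equiv 1$, which is not in $\HH$ because Definition~\ref{defi-hset} requires $\lim_{r\to 0}h(r)=0$, and the paper's term ``measure function'' applies only to elements of $\HH$. Your standing hypothesis $\int_0^1\xi(s)\,\frac{\dint s}{s}=\infty$ is the correct repair within the paper's conventions. Second, be aware of where the real weight of part (i) sits: the existence of an actual $h$-set from \eqref{main2a} is the hard geometric content, and your proof (like the paper's citation) outsources it entirely to the characterisation quoted in Remark~\ref{rem-doubling}, which comes from \cite{bricchi-fsdona}. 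That is legitimate within this paper's logical structure, but it means your argument is a reduction to that quoted result, not an independent proof of it.
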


This version of the theorem is given in \cite[Thm.~3.7]{bricchi-3}; it can
also be identified as a special case of a result in \cite[pp.~74]{BGT}.

\begin{example}\label{ex-h-fnt}
We restrict ourselves to a few examples only, but in view of Proposition~\ref{measure-xi} one can easily find 
further examples, we refer to \cite[Ex.~3.8]{bricchi-3}, too. All functions
are defined for $r\in (0,\varepsilon)$, suitably extended on $(0,1]$ afterwards.

Let $\Psi$ be a continuous {\em admissible} function or a continuous {\em
  slowly varying} function, respectively. An
{\em admissible} function $\Psi$ in the sense of \cite{ET6,moura-diss} is a
positive monotone function on $(0,1]$ such that $\Psi\left(2^{-2j}\right) \sim
\Psi\left(2^{-j}\right)$, $j\in\nat$. A positive and measurable function
  $\Psi$ defined on the interval $(0,1]$ is said to be {\em slowly varying}
  (in Karamata's sense) if
\begin{equation} \label{a.f.equiv}
\lim_{t\rightarrow 0}\frac{\Psi(s t)}{\Psi(t)}=1,\quad s\in(0,1].
\end{equation}
For such functions it is known, for instance, that for any $\delta >0$ there exists
$c=c(\delta)>1$ such that $ \frac{1}{c}\,s^{\delta} \leq
 \frac{\Psi(st)}{\Psi(t)} \leq c\, s^{-\delta}$, for $ t,s\in
 (0,1]$, and for each $\varepsilon>0$ there is a non-increasing
 function $\phi$ and a non-decreasing  function $\varphi$
with $t^{-\varepsilon}\,\Psi(t)\sim \phi(t)$, and
$t^{\varepsilon}\,\Psi(t)\sim \varphi(t)$; 
we refer to the monograph \cite{BGT} for details and further properties; see also
\cite[Ch.~V]{zygmund}, \cite{EKP}, and \cite{neves-01,neves-02}.
In particular,
\begin{equation}
\Psi_b(x)=\left(1+|\log x|\right)^b, \quad x\in (0,1],\quad
b\in\real, \label{psi-ex-log}
\end{equation}
may be considered as a prototype both for an admissible function and a slowly
varying function. \\
Let $0<d<n$. Then 
\begin{equation}
\label{hPsi}
h(r)=r^d\ \Psi(r),\quad r\in (0,1],
\end{equation}
is a typical example for $h\in\HH$. The limiting cases $d=0$ and $d=n$ can be
included, assuming additional properties of $\Psi$ in view of  \eqref{main2}
and $h(r)\to 0$ for $r\to 0$, e.g.
\begin{equation}
h(r)=(1+|\log r|)^b, \quad b<0,\quad r\in (0,1],\label{h-log-ex}
\end{equation}
referring to \eqref{psi-ex-log}. 

Later on we shall need to consider measure functions of this kind when $b \in [-1,0)$, so we would like to mention here that with this restriction the proof that the above is a measure function is a simple application of the characterisation given just before Proposition \ref{T:h-sets} (take $h^\ast = h$ and observe that, for $b \in [-1,0)$, $(1+x)^b 2^{xn} \geq 1$ for $x=0$ and for $x \geq 1$, from which follows (\ref{main2a}) taking $x=k$).

Such functions $h$ given by \eqref{hPsi} are related
to so-called $(d,\Psi)$-sets studied in \cite{ET6,moura-diss}, whereas
the special setting $\Psi\equiv 1$ leads to 
\begin{equation}
\label{h-d}
h(r)=r^d,\quad r\in (0,1],\quad 0< d< n,
\end{equation}
connected with the famous $d$-sets. Apart from \eqref{hPsi} also functions of
type $h(r)=\exp\left(b |\log r|^\varkappa\right)$, $b<0$, $0<\varkappa<1$, are admitted.
\end{example}

We shall need another feature of $h$-sets, namely the so-called `porosity'
condition, see also \cite[p.156]{mattila} and \cite[Sects.~9.16-9.19]{T-func}.

\begin{definition}
\label{porosity}
A Borel set $\Gamma\neq\emptyset$ satisfies the {\em porosity condition} if
there exists a number $0<\eta<1$, such that for any ball $B(\gamma,r)$
centered at $\gamma\in\Gamma$ and with radius $0<r\leq 1$, there is a ball
$B(x,\eta r)$ centered at some $x\in\rn$ satisfying
\begin{equation}
B(\gamma,r) \ \supset \ B(x,\eta r), \quad B(x,\eta r) \ \cap \
\overline{\Gamma} \ = \ \emptyset.
\label{ball-1}
\end{equation}
\end{definition}

Replacing $\eta$ by $\frac{\eta}{2}$, we can complement \eqref{ball-1} by
\begin{equation}
\mathrm{dist}\ \left(B(x,\eta r), \overline{\Gamma}\right) \ \geq \ \eta r,
\quad 0<r\leq 1.
\label{ball-2}
\end{equation}
This definition coincides with \cite[Def.~18.10]{T-Frac}. There is a complete
characterisation for measure functions $h$ such that the corresponding
$h$-sets $\Gamma$ satisfy the porosity condition; this can be found in
\cite[Prop.~9.18]{T-func}. We recall it for convenience.

\begin{proposition}
Let $\Gamma\subset\rn$ be an $h$-set. Then $\Gamma$ satisfies the porosity
condition if, and only if, there exist constants $c>0$ and $\varepsilon>0$
such that 
\begin{equation}
\frac{h\left(2^{-j-k}\right)}{h\left(2^{-j}\right)} \ \geq \ c\
2^{-(n-\varepsilon)k}, \quad j,k\in\no.
\label{ball-3}
\end{equation}
\label{crit-poros}
\end{proposition}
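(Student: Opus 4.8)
The plan is to translate both implications into a single geometric quantity — the local packing number of $\Gamma$ — by means of the measure $\mu$, and then read off \eqref{ball-3} from a box‑counting estimate and conversely. Fix $\gamma\in\Gamma$ and $j,k\in\no$, and let $N=N(\gamma,j,k)$ be the maximal number of pairwise $2^{-j-k}$‑separated points of $\Gamma\cap B(\gamma,2^{-j})$. I would first record the comparison
\[
N \ \sim\ \frac{h(2^{-j})}{h(2^{-j-k})},
\]
whose proof is routine: the balls of radius $2^{-j-k-1}$ about such points are disjoint and sit inside $B(\gamma,2^{-j+1})$, so by \eqref{hset} and the doubling property \eqref{doubling} their number times $h(2^{-j-k})$ is $\lesssim h(2^{-j})$; conversely a maximal separated family is a $2^{-j-k}$‑net, so the balls of radius $2^{-j-k}$ about its points cover $\Gamma\cap B(\gamma,2^{-j})$, and the same two relations give the reverse inequality. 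In this way \eqref{ball-3} becomes equivalent (up to fixed constants) to the uniform bound $N\lesssim 2^{(n-\varepsilon)k}$, i.e. to $\Gamma$ having, locally and uniformly, upper box dimension at most $n-\varepsilon$.

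For the implication \eqref{ball-3}$\Rightarrow$ porosity I would argue by contradiction, using only the easy (disjointness) half of the comparison, so that no iteration is needed: \eqref{ball-3} gives $N(\gamma,j,k)\le C\,2^{(n-\varepsilon)k}$ for all $\gamma,j,k$. Suppose $\Gamma$ were not porous. Then for every $\eta\in(0,\tfrac14)$ there is a ball $B(\gamma,r)$, $\gamma\in\Gamma$, $0<r\le1$, admitting no empty sub‑ball of radius $\eta r$; equivalently every point of $B(\gamma,\tfrac34 r)$ lies within $\eta r$ of $\Gamma$. Choosing a maximal $2\eta r$‑separated family in $\Gamma\cap B(\gamma,\tfrac34 r)$, the disjoint balls of radius $\eta r$ about its points bound its cardinality above by the packing number, hence (passing to the nearest dyadic radii) by $\lesssim\eta^{-(n-\varepsilon)}$; on the other hand the $\eta r$‑density of $\Gamma$ forces the dilated balls of radius $3\eta r$ to cover $B(\gamma,\tfrac12 r)$, so a Lebesgue‑volume count bounds the same cardinality below by $\gtrsim\eta^{-n}$. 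Comparing the two yields $\eta^{-\varepsilon}\lesssim 1$, which fails once $\eta$ is small enough — a contradiction. Hence $\Gamma$ is porous.

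For the reverse implication, porosity $\Rightarrow$ \eqref{ball-3}, the comparison reduces matters to $N(\gamma,j,k)\lesssim 2^{(n-\varepsilon)k}$, and this is where the work lies. I would run a hole‑removal iteration over blocks of a fixed number $\ell_0$ of dyadic scales, with $\ell_0=\ell_0(\eta,n)$ chosen so large that an empty ball of radius $\eta\rho$ fully contains a definite proportion of the dyadic sub‑cubes of side $2^{-\ell_0}\rho$. Applying \eqref{ball-1} to $B(\gamma_Q,\rho)$ for a point $\gamma_Q\in\Gamma$ lying in a cube $Q$ of side $\rho$, the resulting empty ball removes a fraction $\ge\delta=\delta(\eta,n)>0$ of the relevant sub‑cubes, so that (up to the overlap bookkeeping discussed below) at most $(1-\delta)\,2^{n\ell_0}$ of them meet $\Gamma$. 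Writing $a(k)$ for the supremum over all cubes $Q$ of side $\rho$ meeting $\Gamma$ of the number of $\Gamma$‑meeting dyadic sub‑cubes of side $2^{-k}\rho$, the nesting of dyadic cubes makes this count submultiplicative, so iterating across $m$ blocks gives $a(m\ell_0)\le\big((1-\delta)\,2^{n\ell_0}\big)^{m}$. With $k=m\ell_0$ this is $2^{(n-\varepsilon)k}$ for $\varepsilon=-\ell_0^{-1}\log_2(1-\delta)>0$; filling in non‑multiple values of $k$ by the trivial bound costs only a constant, and translating back through the packing comparison yields \eqref{ball-3}.

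The main obstacle is precisely the bookkeeping of this iteration, and two points need genuine care. First, the per‑block deficiency $\delta$ must be made uniform in the scale $\rho$ and the location $\gamma$, and the constants incurred per block must be prevented from accumulating over the $\sim k/\ell_0$ steps; this is the reason to phrase the count through nested dyadic cubes, where the composition of scales is \emph{exactly} submultiplicative, $a(k_1+k_2)\le a(k_1)\,a(k_2)$, rather than through ball coverings, whose multiplicative overlap constants would swamp the gain $1-\delta$. Second, the empty ball produced by \eqref{ball-1} at $\gamma_Q$ need not lie inside $Q$ itself but only inside a bounded dyadic neighbourhood of $Q$; one must therefore absorb the resulting bounded overlap multiplicity when passing from ``one empty hole per applied cube'' to ``a fixed fraction of empty cubes per generation''. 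Once these two points are settled the exponent $n-\varepsilon$ drops out, and together with the first implication the stated equivalence is complete.
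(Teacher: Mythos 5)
The first thing to note is that the paper itself does not prove Proposition~\ref{crit-poros}: it explicitly recalls the statement from Triebel's \emph{The structure of functions} \cite[Prop.~9.18]{T-func}, so your attempt must be judged on its own terms. Two of your three steps are sound. The packing comparison $N(\gamma,j,k)\sim h(2^{-j})/h(2^{-j-k})$ is correct (disjointness of the half-radius balls together with the doubling-type estimate \eqref{main2} in one direction, the net property of a maximal separated family in the other), and the implication \eqref{ball-3}$\,\Rightarrow\,$porosity is complete: the Lebesgue-volume lower bound $\gtrsim\eta^{-n}$ against the packing upper bound $\lesssim\eta^{-(n-\varepsilon)}$, with constants uniform in $\eta$, does yield the contradiction.

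The genuine gap is in the converse, exactly at the per-generation deficiency claim. Applying \eqref{ball-1} to $B(\gamma_Q,\rho)$ with $\gamma_Q\in\Gamma\cap Q$ produces an empty ball lying in $B(\gamma_Q,\rho)\subset 3Q$ but possibly entirely outside $Q$ (take $\gamma_Q$ a corner of $Q$ and the hole on the far side of $\gamma_Q$); worse, the hole may sit inside neighbouring grid cubes that do not meet $\Gamma$ at all --- think of $\Gamma$ locally a piece of a hyperplane, where the natural holes lie in the cells just above and below the layer carrying $\Gamma$. In that situation your scheme extracts no reduction at all in the number of $\Gamma$-meeting sub-cubes of $\Gamma$-meeting cubes, so the bound $a(\ell_0)\le(1-\delta)2^{n\ell_0}$ is not established. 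The proposed repair --- count in the neighbourhood $3Q$ and ``absorb the bounded overlap multiplicity'' --- reintroduces precisely the disease your first point of care was designed to avoid: the submultiplicative quantity then has $3^n2^{n\ell_0}$ cells per block, of which the single hole removes only a fraction $\sim\eta^n 3^{-n}$, so the iteration yields $\bigl(3^n(1-\delta)\bigr)^m 2^{n\ell_0 m}$, and since $3^n(1-\delta)>1$ the per-block factor $3^n$ swamps the gain. No bookkeeping that only uses ``one hole somewhere in $3Q$ per $\Gamma$-meeting cube $Q$'' can close this, since the hyperplane picture is consistent with all holes lying in cells that never entered the count.

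The missing idea is to manufacture a hole \emph{inside every cube}, not merely near it, and it is a one-line dichotomy on the centre $c_Q$: for an arbitrary cube $Q$ of side $\rho\le1$, either $\mathrm{dist}(c_Q,\Gamma)>\rho/4$, in which case $B(c_Q,\rho/4)\subset Q$ is already disjoint from $\overline{\Gamma}$; or there is $\gamma\in\Gamma$ with $|\gamma-c_Q|\le\rho/4$, and \eqref{ball-1} applied to $B(\gamma,\rho/4)$ gives an empty ball of radius $\eta\rho/4$ contained in $B(\gamma,\rho/4)\subset B(c_Q,\rho/2)\subset Q$. Hence every cube of side $\rho\le 1$ --- whether or not it meets $\Gamma$, and with no reference to where $\Gamma$ touches it --- contains an empty grid sub-cube of side $2^{-\ell_0}\rho$ as soon as $2^{\ell_0}>4\sqrt{n}/\eta$, giving $a(\ell_0)\le 2^{n\ell_0}-1$. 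With this, your exactly submultiplicative iteration $a(m\ell_0)\le(2^{n\ell_0}-1)^m=2^{(n-\varepsilon)m\ell_0}$, $\varepsilon=n-\ell_0^{-1}\log\bigl(2^{n\ell_0}-1\bigr)>0$, closes, and the remainder of your argument (filling in non-multiples of $\ell_0$, converting box counts back to packing numbers, and invoking your Step~1) goes through verbatim.
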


Note that an $h$-set $\Gamma$ satisfying the porosity condition has Lebesgue
measure $|\Gamma|=0$, but the converse is not true. This can be seen from
\eqref{ball-3} and the result \cite[Prop.~1.153]{T-F3}, 
\begin{equation}
|\Gamma| = 0 \quad\mbox{if, and only if,}\quad \lim_{r\to 0} \ \frac{r^n}{h(r)}
 = 0
\end{equation}
for all $h$-sets $\Gamma$.

\begin{remark}\label{example-ball}
In view of our above examples and \eqref{ball-3} it is obvious that $h$ from
\eqref{hPsi} and \eqref{h-d} with $d=n$ does not satisfy the porosity condition, unlike in
case of $d<n$.
\end{remark}

Let $L_p(\Omega)$, $\Omega\subseteq\rn$, $0<p\leq \infty$, stand for the usual
quasi-Banach space of $p$-integrable  (measurable, essentially bounded if
$p=\infty$) functions with respect to the Lebesgue measure, quasi-normed by 
$$
\|f\mid L_p(\Omega)\|:= \Bigl(\int_{\Omega} |f(x)|^p \dint x \Bigr)^{\!1/p},
$$
with the obvious modification if $p=\infty$. Moreover, when $\Gamma\subset\rn$
is an $h$-set in the sense of Definition~\ref{defi-hset}, then we consider
$L_p(\Gamma)=L_p(\Gamma,\mu)$ as the usual quasi-Banach space of $p$-integrable  (measurable, essentially bounded if $p=\infty$) functions on $\Gamma$ with respect to the measure $\mu$, quasi-normed by 
$$
\|f\mid L_p(\Gamma)\| = \Bigl(\int_{\Gamma} |f(\gamma)|^p \mu(\dint \gamma)
\Bigr)^{\!1/p} < \infty 
$$
for $0<p<\infty$, and
$$ \|f\mid L_\infty(\Gamma)\| = \inf\left\{ s>0~: \ \mu\left(\{\gamma\in\Gamma
: |f(\gamma)|>s\}\right)=0\right\} < \infty.
$$
In view of Proposition~\ref{T:h-sets} all (possibly different) measures $\mu$
corresponding to $h$ result in the same $L_p(\Gamma)$ space.

\subsection{Function spaces of generalised smoothness}
%
We deal with the concept of admissible sequences first.
\begin{definition}
A sequence $\bm{\sigma} = (\sigma_j)_{j\in\no}$ of positive numbers is called {\em
  admissible} if there are two positive constants $\ d_0$, $d_1$ such that
\begin{equation}
d_0 \ \sigma_j \ \leq \ \sigma_{j+1}\ \leq \ d_1\ \sigma_j, \quad j\in\no.
\label{adm-seq}
\end{equation}
\label{defi-sigma-adm}
\end{definition}

\begin{remark}\label{R-adm-1}
If $\bm{\sigma}$ and $\bm{\tau}$ are admissible sequences, then $\bm{\sigma
  \tau} := \left( \sigma_j \tau_j\right)_j\ $ and $\bm{\sigma}^r :=
\left(\sigma_j^r\right)_j\ $, $r\in\real$, are admissible, too. For later use
we introduce the  notation
\begin{equation}
\paren{a} := \left(2^{ja}\right)_{j\in\no}\quad \mbox{where}\quad a\in\real,
\end{equation}
that is $\paren{a}=\bm{\sigma}$ with $\sigma_j = 2^{ja}$, $j\in\no$. 
Obviously, for $a,b\in\real$, $r>0$, and $\bm{\sigma}$ admissible, we have 
$\paren{a} \paren{b} = \paren{a+b}$,
$\paren{\textstyle\frac{a}{r}}=\paren{a}^{1/r}$, and $\paren{a}\bm{\sigma} = 
\left(2^{ja}\sigma_j \right)_{j\in\no}$. 
\end{remark}

\begin{example}\label{ex-sigma}
Let $ s\in\real$, $\Psi $ be an admissible function in the sense of
Example~\ref{ex-h-fnt} above. Then $ \bm{\sigma} = 
\left(2^{js} \Psi\left(2^{-j}\right)\right)_j $ is admissible, including, in
particular, $ \bm{\sigma} = \paren{s}$, $ s\in\real$. We refer to
\cite{F-L} for a more general approach and further examples.
\end{example}

We introduce some `regularity' indices for $\bm{\sigma}$. 

\begin{definition}
Let $\bm{\sigma}$ be an admissible sequence, and
\begin{equation}
\label{s_sigma}
\lowind{\sigma} := \liminf_{j\to\infty} \
\log\left(\frac{\sigma_{j+1}}{\sigma_j}\right) 
\end{equation}
and
\begin{equation}
\label{s^sigma}
\upind{\sigma} := \limsup_{j\to\infty} \
\log\left(\frac{\sigma_{j+1}}{\sigma_j}\right). 
\end{equation}
\end{definition}

\begin{remark}\label{R-Boyd}
These indices were introduced and used in \cite{bricchi-diss}. For admissible sequences $\bm{\sigma}$
according to \eqref{adm-seq} we have $\ \log d_0 \leq 
 \lowind{\sigma} \leq \upind{\sigma} \leq \log d_1$. One easily
verifies that
\begin{equation}
 \upind{\sigma} = \lowind{\sigma} = s\quad \mbox{in case of}\quad  \bm{\sigma} =
 \left(2^{js} \Psi\left(2^{-j}\right)\right)_j 
\label{index-ex}
\end{equation}
for all admissible functions $\Psi $ and $s\in\real$. On the other hand one
can find examples in \cite{F-L}, due to 
\textsc{Kalyabin}, showing that an admissible sequence has not
necessarily a fixed main order. Moreover, it is known that 
for any $0<a\leq b<\infty$, there is
an admissible sequence $\bm{\sigma}$ with
$\lowind{\sigma} =a$ and $\upind{\sigma} =b$,
that is, with prescribed upper and lower indices.\\

For later use we fix some observations that are more or less immediate consequences of the definitions
\eqref{s_sigma}, \eqref{s^sigma}. Let $\bm{\sigma}, \bm{\tau}$ be 
admissible sequences. Then
\begin{equation}
\label{index-1}
\lowind{\sigma} = -
\sol\left(\bm{\sigma}^{-1}\right), \qquad 
\sul\left(\bm{\sigma}^r\right) = r\
\lowind{\sigma}, \quad r \geq 0, 
\end{equation}
and
\begin{equation}
\upind{\sigma \tau} \leq \upind{\sigma} + \upind{\tau},\qquad 
\lowind{\sigma \tau} \geq \lowind{\sigma} + \lowind{\tau}.\label{index-2}
\end{equation}
In particular, for $\bm{\sigma} = \paren{a}$, $a\in\real$, then
\eqref{index-2} can be sharpened by
\begin{equation}
\sol\left(\bm{\tau} \paren{a}\right)   = a + \upind{\tau}, \quad 
\sul\left(\bm{\tau} \paren{a}\right) = a + \lowind{\tau}. 
\label{index-3}
\end{equation}
Observe that,
given $\varepsilon>0$, there are two positive constants
$c_1=c_1(\varepsilon)$ and $c_2=c_2(\varepsilon)$ such that
\begin{equation} \label{estimate}
c_1 \ 2^{(\lowind{\sigma}-\varepsilon)j} \leq \sigma_j \leq c_2 \
2^{(\upind{\sigma}+\varepsilon)j},\qquad j\in\nat_0.
\end{equation}
Plainly this implies that whenever $\lowind{\sigma} >0$, then
$\bm{\sigma}^{-1}$ belongs to any space $\ell_u$, $0<u\leq\infty$, whereas
$\upind{\sigma}<0 $ leads to $\bm{\sigma}^{-1}\not\in\ell_\infty$.
\end{remark}

\begin{remark}\label{R-boyd-2}
Note that in some later papers, cf. \cite{bricchi-4}, instead of \eqref{s_sigma} and
  \eqref{s^sigma} the so-called {\em upper} and {\em lower Boyd
  indices of} $\bm{\sigma}$ are considered, given by 
\begin{equation}
\alpha_{\bm{\sigma}} = \lim_{j\to\infty} \ \frac1j \ 
\log \left(\sup_{k\in\no} \ \frac{\sigma_{j+k}}{\sigma_k}\right) \ = \ \inf_{j\in\nat} \ \frac1j \ 
\log \left(\sup_{k\in\no} \ \frac{\sigma_{j+k}}{\sigma_k}\right)
\end{equation}
and
\begin{equation}
\beta_{\bm{\sigma}} = \lim_{j\to\infty} \ \frac1j \ 
\log \left(\inf_{k\in\no} \ \frac{\sigma_{j+k}}{\sigma_k}\right) \ = \ \sup_{j\in\nat} \ \frac1j \ 
\log \left(\inf_{k\in\no} \ \frac{\sigma_{j+k}}{\sigma_k}\right)
\end{equation}
respectively. In general we have 
$$
\lowind{\sigma}  \leq \beta_{\bm{\sigma}} \leq  \alpha_{\bm{\sigma}}\leq
 \upind{\sigma},
$$
but one can construct admissible sequences with $\lowind{\sigma}  <
\beta_{\bm{\sigma}}$ and $\alpha_{\bm{\sigma}} <  \upind{\sigma}$. 
\end{remark}

We want to introduce function spaces of generalised smoothness and need
to recall some notation. By $\mathcal{S}(\rn)$ we denote the Schwartz
space of all complex-valued, infinitely differentiable and rapidly
decreasing functions on $\rn$ and by $\mathcal{S}'(\rn)$ the dual space
of all tempered distributions on $\rn$. If $\varphi \in
\mathcal{S}(\rn)$, then
\begin{equation}\label{Ftransform}
\widehat{\varphi}(\xi)\equiv (\mathcal F
\varphi)(\xi):=(2\pi)^{-n/2}\int_{\rn}e^{-ix\xi}\varphi(x) \dint
x, \quad \xi\in \rn,
\end{equation}
denotes the Fourier transform of $\varphi$. As usual, ${\mathcal
F}^{-1} \varphi$ or $\varphi^{\vee}$ stands for the inverse
Fourier transform, given by the right-hand side of
\eqref{Ftransform} with $i$ in place of $-i$. Here $x\xi$ denotes
the scalar product in $\rn$. Both $\mathcal F$ and ${\mathcal
F}^{-1}$ are extended to $\mathcal{S}'(\rn)$ in the standard way. Let
$\varphi_0\in\mathcal{S}(\rn)$ be such that
\begin{equation}  \label{phi}
\varphi_0(x)=1 \quad \mbox{if}\quad |x|\leq 1 \quad \mbox{and}
\quad \supp \varphi_0 \subset \{x\in\rn: |x|\leq 2\},
\end{equation}
and for each $j\in\nat$ let
\begin{equation}\label{phi-j}
\varphi_j(x):=\varphi_0(2^{-j}x)-\varphi_0(2^{-j+1}x), \quad x\in
\rn.
\end{equation}
Then the sequence $(\varphi_j)_{j=0}^{\infty}$ forms a smooth dyadic resolution of
unity.

\begin{definition}
Let $\bm{\sigma}$ be an admissible sequence, $0<p,q\leq\infty$, and
$(\varphi_j)_{j=0}^{\infty}$ a smooth dyadic resolution of
unity (in the sense described above). Then
\begin{equation}
\Bsi(\rn) = \left\{ f\in \mathcal{S}'(\rn): \biggl( \sum_{j=0}^\infty
\sigma_j^q \left\| {\mathcal F}^{-1} \varphi_j {\mathcal F}f |
L_p(\rn)\right\|^q \biggr)^{1/q} < \infty\right\}
\end{equation}
$($with the usual modification if $q=\infty)$.
\end{definition}
\begin{remark}\label{R-Bsigma}
These spaces are quasi-Banach spaces, independent of the chosen resolution of
unity, and $\mathcal{S}(\rn)$ is dense in $\Bsi(\rn)$ when $p<\infty$ and
$q<\infty$. Taking $\bm{\sigma} = \left(2^{js}\right)_j\ $, $ 
s\in\real$, we obtain the classical Besov spaces $\B(\rn)$, whereas $\ \bm{\sigma}
= \left(2^{js} \Psi\left(2^{-j}\right)\right)_j\ $, $ s\in\real$, $\Psi $ an
admissible function, leads to spaces $B^{(s,\Psi)}_{p,q}(\rn)$, studied in
\cite{moura,moura-diss} in detail. Moreover, the above spaces $\Bsi(\rn)$
are special cases of the more general approach investigated in \cite{F-L}. For
the theory of spaces $\B(\rn)$ we refer to the series of monographs 
\cite{T-F1,T-F2,T-Frac,T-func,T-F3}.
\end{remark}

We recall the atomic characterisation of spaces $\Bsi(\rn)$, for later use. 
Let $\zn$ stand for the lattice of all points in $\rn$
with integer-valued components, $Q_{j m}$ denote a cube in
$\rn$ with sides parallel to the axes of coordinates, centered at
$2^{-j}m=(2^{-j}m_1,\dots,2^{-j}m_n)$, and with side length
$2^{-j}$, where $m\in \zn$ and $j \in \no$.
If $Q$ is a cube in $\rn$ with sides parallel to the axes of coordinates and $r>0$, then $rQ$ is the cube in $\rn$
concentric with $Q$, with sides parallel to the sides of $Q$ and $r$ times the length.

\begin{definition} \label{atoms-Bsigma}
Let $K \in \nat_0$ and $b> 1$.\\[-4ex]
\bli
\item[{\upshape\bfseries (i)\hfill}]
A $K$ times differentiable
complex-valued function $a(x)$ in $\rn$ $($continuous if $K=0)$ is
called {\em an $1_K$-atom} if
\begin{equation}\label{atom-supp-0}
\supp a \subset b\,Q_{0 m},\quad\mbox{for some}\quad m\in\zn,
\end{equation}
and
$$
|\Dd^{\alpha}a(x)|\leq 1, \quad \mbox{for} \quad |\alpha|\leq K.
$$
\item[{\upshape\bfseries (ii)\hfill}]
Let $L+1\in\nat_0$, and $\bm{\sigma}$ be admissible. A $K$
times differentiable complex-valued function $a(x)$ in $\rn$
$($continuous if $K=0)$ is called {\em an $(\bm{\sigma}, p)_{K,L}$-atom}
if for some $j \in \no$,
\begin{align}
\supp a \subset b\,Q_{j m},\quad & \mbox{for some}\quad m\in \zn,
\label{atom-supp}\\
|\Dd^{\alpha}a(x)|\leq \sigma_j^{-1} \ 2^{j(\frac{n}{p}+|\alpha|)}\, \quad
& \mbox{for} \quad |\alpha|\leq K,\quad x\in\rn,\label{atom-deri}
\intertext{and}
\int_{\rn} x^{\beta}a(x) \dint x=0, \quad & \mbox{if}
\quad|\beta|\leq L.
\label{atom-mom}
\end{align}
\eli
\end{definition}

We adopt the usual convention to denote atoms located at $Q_{j m}$ (which means \eqref{atom-supp-0} or \eqref{atom-supp}) by
$a_{jm}$, $j\in\no$, $m\in\zn$. For  sequences $\lambda =
\left(\lambda_{jm}\right)_{j\in\no, \ m\in\zn}$ of complex numbers the
Besov sequence spaces $b_{p,q}$, $0<p,q\leq\infty$, are given by
\begin{equation}
\lambda \in b_{p,q} \quad\mbox{if, and only if,}\quad
\left\|\lambda| b_{p,q}\right\| = \left(\sum_{j=0}^\infty \left(\sum_{m\in\zn}
\left|\lambda_{jm}\right|^p\right)^{q/p}\right)^{1/q} < \infty
\end{equation}
(with the usual modification if $p=\infty$ or $q=\infty$). 
The atomic decomposition theorem for $\Bsi(\rn)$ reads as follows, see
\cite[Thm.~4.4.3, Rem. 4.4.8]{F-L}.

\begin{proposition} \label{atomicdecomposition}
Let $\bm{\sigma}$ be admissible, $b>1$, $0<p,q\leq\infty$, $K\in \no$ and $L+1\in\no$ with
\begin{equation}
K > \upind{\sigma}, \qquad\mbox{and}\qquad L > -1 +
n\left(\frac1p -1\right)_+ - \lowind{\sigma}
\label{moment-Bsi}
\end{equation}
be fixed. Then $f\in {\mathcal S}'(\rn)$ belongs to
$\Bsi(\rn)$ if, and only if, it can be represented as
\begin{equation}\label{series}
f=\sum_{j=0}^{\infty} \sum_{m\in\zn} \lambda_{j m}\,a_{j m}(x), 
\quad\mbox{convergence being in } {\mathcal S}'(\rn),
\end{equation}
where $\lambda \in b_{p,q}$ and $a_{j m}(x)$ are $1_K$-atoms $(j=0)$ or
$(\bm{\sigma},p)_{K,L}$-atoms $(j \in \nat)$ according to 
Definition~\ref{atoms-Bsigma}. Furthermore,
$$
\inf \|\lambda\,|\,b_{p,q}\|,
$$
where the infimum is taken over all admissible representations
\eqref{series}, is an equivalent quasi-norm in $\Bsi(\rn)$.
\end{proposition}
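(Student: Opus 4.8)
The plan is to follow the now-standard Frazier--Jawerth and Triebel-type machinery for atomic decompositions, adapted to the generalised-smoothness scale by replacing the classical factor $2^{js}$ throughout by $\sigma_j$ and invoking the two-sided bound \eqref{estimate} to convert between $\bm{\sigma}$ and dyadic powers whenever a geometric summation has to be carried out. Accordingly I would split the statement into its two halves: the \emph{sufficiency} (that any $\lambda\in b_{p,q}$ produces, via \eqref{series}, a distribution in $\Bsi(\rn)$ with $\|f\mid\Bsi(\rn)\|\lesssim\|\lambda\mid b_{p,q}\|$) and the \emph{necessity} (that every $f\in\Bsi(\rn)$ admits such a representation with $\|\lambda\mid b_{p,q}\|\lesssim\|f\mid\Bsi(\rn)\|$, so that the infimum is an equivalent quasi-norm).

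For the sufficiency direction I would first establish the crucial pointwise kernel estimate controlling, for each frequency index $\nu\in\no$, the quantity $\bigl|\Fti\varphi_\nu\Ft a_{jm}(x)\bigr|$, where $a_{jm}$ is an $(\bm{\sigma},p)_{K,L}$-atom (or a $1_K$-atom when $j=0$). Two regimes arise. When $\nu\geq j$ the smoothness of $\varphi_\nu$ played against the vanishing-moment condition \eqref{atom-mom} yields a gain of $2^{-(\nu-j)(L+1)}$ together with spatial off-diagonal decay localised near $bQ_{jm}$; when $\nu<j$ the derivative bounds \eqref{atom-deri} together with the support condition \eqref{atom-supp} give a gain of $2^{-(j-\nu)K}$. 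Summing the resulting estimates over $m$ (essentially an $\ell_p$ superposition of disjointly supported localised bumps) and then over $j$ against the weight $\sigma_\nu$, the index conditions \eqref{moment-Bsi} are exactly what guarantee that the two geometric series converge: $K>\upind{\sigma}$ controls the low-frequency regime and $L>-1+n(1/p-1)_+-\lowind{\sigma}$ the high-frequency one, once \eqref{estimate} is used to estimate $\sigma_\nu/\sigma_j$ by dyadic powers. Convergence of \eqref{series} in $\SpRn$ follows from the same estimates.

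For the necessity direction I would start from the Littlewood--Paley pieces $f_j:=\Fti\varphi_j\Ft f$, each of which is an entire analytic function with Fourier support in an annulus of radius $\sim 2^j$ (a ball when $j=0$). Using a reproducing formula adapted to this band-limited structure, I would localise $f_j$ over the cubes $Q_{jm}$, defining the coefficients $\lambda_{jm}$ as $\sigma_j\,2^{-jn/p}$ times a suitable local quantity (a smoothed supremum of $f_j$ over a neighbourhood of $Q_{jm}$) and the atoms as the correspondingly normalised pieces; the required vanishing moments \eqref{atom-mom} are produced by exploiting that, for $j\geq1$, $\widehat{f_j}$ vanishes in a neighbourhood of the origin, which allows the localised pieces to be arranged with the prescribed cancellation. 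The estimate $\|\lambda\mid b_{p,q}\|\lesssim\|f\mid\Bsi(\rn)\|$ then rests on Peetre-type maximal-function inequalities, applicable precisely because of the Fourier-support restriction on $f_j$, together once more with \eqref{estimate} to pass between $\sigma_j$ and powers of $2$.

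The main obstacle is the reconstruction (necessity) half: one must simultaneously arrange the correct normalisation, support, derivative and moment properties of the atoms while keeping quantitative control of the coefficient sequence, and the Peetre maximal-function estimates underpinning $\|\lambda\mid b_{p,q}\|\lesssim\|f\mid\Bsi(\rn)\|$ require care in the genuinely quasi-Banach range $\min\{p,q\}<1$. That said, since the generalised smoothness enters only through the admissibility of $\bm{\sigma}$ and the uniform bound \eqref{estimate}, the entire argument runs parallel to the classical case $\bm{\sigma}=\paren{s}$, and the full details can be found in \cite[Thm.~4.4.3, Rem.~4.4.8]{F-L}.
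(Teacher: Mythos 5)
The paper itself offers no proof of Proposition~\ref{atomicdecomposition}: it is quoted verbatim from \cite[Thm.~4.4.3, Rem.~4.4.8]{F-L}, which is also where your outline ultimately defers. So architecturally your proposal follows the same Fourier-analytical route as the cited source (two-regime kernel estimates for sufficiency; Littlewood--Paley pieces, a reproducing formula and Peetre maximal inequalities for necessity; the bound \eqref{estimate} to trade $\sigma_j$ for dyadic powers). There is, however, a genuine error in the core of your sufficiency argument.

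You have interchanged the two frequency regimes. The vanishing moments \eqref{atom-mom} make $\Ft a_{jm}$ small \emph{near the origin}, so they produce decay only when the analysis frequency lies below the atom's scale, i.e.\ for $\nu<j$: there one Taylor-expands the kernel $\Fti\varphi_\nu$ around the atom's centre, the moments kill the polynomial part, and one gains $2^{-(j-\nu)(L+1)}$ (up to dimensional corrections). Symmetrically, the derivative bounds \eqref{atom-deri} control the decay of $\Ft a_{jm}$ at \emph{high} frequencies, so they help only for $\nu\geq j$, played against the vanishing moments of $\Fti\varphi_\nu$ (recall $\varphi_\nu$ vanishes near the origin for $\nu\geq 1$), yielding the gain $2^{-(\nu-j)K}$. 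As written in your sketch --- moments used for $\nu\geq j$, smoothness for $\nu<j$ --- both estimates fail: \eqref{atom-mom} imposes no constraint on $\Ft a_{jm}$ at frequencies $\sim 2^\nu\gg 2^j$, and no amount of smoothness of the atom improves the trivial bound when $\nu<j$. Consequently the pairing with \eqref{moment-Bsi} must also be swapped: $K>\upind{\sigma}$ makes the series over $\nu\geq j$ converge, since $\sigma_\nu/\sigma_j\lesssim 2^{(\nu-j)(\upind{\sigma}+\varepsilon)}$, whereas $L>-1+n\left(\frac1p-1\right)_+-\lowind{\sigma}$ handles $\nu<j$, where $\sigma_\nu/\sigma_j\lesssim 2^{-(j-\nu)(\lowind{\sigma}-\varepsilon)}$. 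Note also that your appeal to ``disjointly supported localised bumps'' in the sum over $m$ is only valid for $\nu\geq j$; in the regime $\nu<j$ the convolved pieces spread to the scale $2^{-\nu}$ and overlap heavily, and it is exactly this loss, for $p<1$, that produces the term $n\left(\frac1p-1\right)_+$ in the condition on $L$. With the regimes restored to their correct roles, the outline is the standard one and the remaining details are indeed those of \cite[Thm.~4.4.3, Rem.~4.4.8]{F-L}.
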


\begin{remark}
\label{tessellations}
For later use it is useful to remark that, for fixed $j\in \No$, the family
$$
\{ Q_{jm} : \, m \in \Zn \}
$$
constitutes a tessellation of $\Rn$. We shall call it a {\em tessellation of step} $2^{-j}$ and denote each cube of it by a (corresponding) {\em grid cube}.
\end{remark}

\begin{remark}
\label{partitions}
The following shall also be of use later on: given the families of tessellations as above (for any possible $j \in \No$), clearly there exists, for each $j \in \No$, a partition of unity 
$$
\{ \varphi_{jm} : \, m \in \Zn \}
$$
of $\Rn$ by functions $\varphi_{jm}$ supported on $\frac{3}{2} Q_{jm}$ and such that, for each $\gamma \in \Non$, there exists $c_\gamma > 0$ independent of $j$ such that
\begin{equation}
|\Dd^\gamma \varphi_{jm}(x)|\leq c_\gamma 2^{j|\gamma|}, \quad x \in \Rn,\; m \in \Zn.
\label{derivcontrol}
\end{equation}
We shall call it a {\em partition of unity of step} $2^{-j}$.
\end{remark}

\section{Besov spaces on $\Gamma$}
\label{sect-2}

Let $\Gamma$ be some $h$-set, $h\in \HH$. We follow \cite{bricchi-diss} and use the abbreviation
\begin{equation}
\bm{h} := \left(h_j\right)_{j\in\no} \quad \mbox{with}\quad  h_j := h(2^{-j}),
\quad j\in\no,
\label{h-seq}
\end{equation}
for the sequence connected with $h\in\HH$.

\subsection{Trace spaces ${\mathbb B}^{\bm{\sigma}}_{p,q}(\Gamma)$}
%
\label{sect-2-1}

Recall that $L_p(\Gamma) = L_p(\Gamma, \mu)$, where $\mu \sim {\mathcal
  H}^h|\Gamma$ is related to the $h$-set $\Gamma$. Suppose there exists
some $c>0$ such that for all $\varphi\in\mathcal{S}(\rn)$,
\begin{equation}
\label{trace-def}
\left\| \varphi\raisebox{-0.4ex}[1.1ex][0.9ex]{$|_{\Gamma}$}~ | L_p(\Gamma)
\right\| \leq \ c\ \left\| \varphi | \Btau(\rn)\right\|,
\end{equation}
where the restriction on $\Gamma$ is taken pointwisely. By the density of
$\mathcal{S}(\rn)$ in $\Btau(\rn)$ for $p,q<\infty$ and the completeness of
$L_p(\Gamma)$  one can thus define for $f\in \Btau(\rn)$ its
trace $\tr{\Gamma} f $ on
$\Gamma$ by completion of pointwise restrictions. \\ 

This was the approach followed in \cite[Def. 3.3.5]{bricchi-diss}, conjugating general embedding results for Besov spaces on $\Rn$ (as seen for example in \cite[Thm. 3.7]{C-Fa} or \cite[Prop. 2.2.16]{bricchi-diss}) with the fact that \eqref{trace-def} above holds for $\bm{\tau}=\bm{h}^{1/p}\paren{n}^{1/p}$, $0<p<\infty$, $0<q\leq \min\{p,1\}$ (cf. \cite[Thm.~3.3.1(i)]{bricchi-diss}. Then we have, following \cite[Def. 3.3.5]{bricchi-diss}, that for $0<p,q<\infty$ and $\bm{\sigma}$ admissible with  $\lowind{\sigma}
> 0$,  Besov spaces $\Bsg$ on $\Gamma$ are defined as

\begin{equation} 
\label{defBesovGamma}
\Bsg := \tr{\Gamma} \Bsih,
\end{equation}
more precisely,
\begin{equation} 
\label{precisely}
\Bsg := \left\{ f\in L_p(\Gamma) : \exists \ g\in \Bsih,\tr{\Gamma} g =   
    f\right\},
\end{equation}
equipped with the quasi-norm
\begin{equation}
\label{qnormBG}
\left\| f |  \Bsg\right\| = \inf\left\{ \left\| g | \Bsih\right\| :  
   \tr{\Gamma} g =  f,  g\in \Bsih\right\}.
\end{equation}

This was extended in the following way in \cite[Def. 2.7]{Cae-env-h}:

\begin{definition}
Let $0<p,q<\infty$, $\bm{\sigma}$ be an admissible sequence, and $\Gamma$ be an
$h$-set. Assume that \\[-4ex]
\bli
\item[{\upshape\bfseries (i)\hfill}]
in case of $p\geq 1$ or $q\leq p<1$, 
\begin{equation}
\label{ext-2-4-a}
\bm{\sigma}^{-1} \in \ell_{q'},
\end{equation}
\item[{\upshape\bfseries (ii)\hfill}]
in case of $0<p<1$ and $p<q$,
\begin{equation}
\bm{\sigma}^{-1} \bm{h}^{\frac1r-\frac1p}\in \ell_{v_r} \quad \text{for some} \   
r\in [p,\min\{q,1\}] ~ \mbox{and} \ 
\frac{1}{v_r} = \frac1r-\frac1q, 
\label{ext-2-4-b}
\end{equation}
\eli
is satisfied. Then (it makes sense to) define $\Bsg$  as in \eqref{defBesovGamma}, \eqref{precisely}, again with the  quasi-norm given by \eqref{qnormBG}.
\label{defi-Bsg}
\end{definition}

\begin{remark}
The reasonability of declaring that smoothness $\paren{1}$  (that is, 0, in classical notation) on $\Gamma$ corresponds to smoothness $\bm{h}^{1/p}\paren{n}^{1/p}$ on $\Rn$ --- as is implicit in \eqref{defBesovGamma} --- comes from the fact that, at least when $\Gamma$ also satisfies the porosity condition, we have indeed that $\tr{\Gamma} B_{p,q}^{\bm{h}^{1/p}\paren{n}^{1/p}}(\Rn) = L_p(\Gamma)$ when $0<p<\infty$, $0<q\leq \min\{p,1\}$  --- cf. \cite[Thm. 3.3.1(ii)]{bricchi-diss}.
\end{remark}

\begin{remark}
Both in \cite{bricchi-diss} and in \cite{Cae-env-h} the definition of Besov spaces on $\Gamma$ also covers the cases when $p$ or $q$ can be $\infty$, following some convenient modifications of the approach given above. However, in view of the main results to be presented in this paper, the restriction to $0<p,q<\infty$ is natural for us here, as will be apparent in the following subsection.
\end{remark}

\begin{remark}\label{R-ext-def}
We briefly compare the different assumptions in \cite[Def.~3.3.5]{bricchi-diss} and in
Definition~\ref{defi-Bsg} above. Due to the observation made immediately after \eqref{estimate}, $\lowind{\sigma}
> 0$ implies $\bm{\sigma}^{-1} \in \ell_v$ for arbitrary $v$,
i.e. \eqref{ext-2-4-a} and \eqref{ext-2-4-b} with $r=p$. The converse,
however, is not true, take e.g. $\sigma_j = 
(1+j)^\varkappa$, $\varkappa\in\real$, then $\lowind{\sigma} =
0$, but $\bm{\sigma}^{-1}\in \ell_{q'}  $ for $\ \varkappa >
\frac{1}{q'}$, corresponding to \eqref{ext-2-4-a}. As for
\eqref{ext-2-4-b}, say with $p=r$, for the same $\bm{\sigma}$ given above we have also that
$\bm{\sigma}^{-1}\in\ell_{v_p}$ for $\varkappa > \frac1p-\frac1q$, but
still $\lowind{\sigma}=0$. So the above definition is in fact a proper extension of the one considered in \cite{bricchi-diss} and, as we shall see, will (at least in some cases) be indeed the largest possible extension.
\end{remark}

\begin{remark}\label{R-09-3}
The definition above applies in particular when $\Gamma$ is a $d$-set and $\bm{\sigma} = \paren{s}$ with
$s>0$. In simpler notation, we can write in this case that 
\begin{equation} 
\mathbb{B}^s_{p,q}(\Gamma) = \tr{\Gamma} B^{s+\frac{n-d}{p}}_{p,q}(\rn).
\end{equation}
This coincides with \cite[Def.~20.2]{T-Frac}.
\end{remark}


\subsection{Criteria for non-existence of trace spaces}\label{sect-2-2}
Here we shall try to get necessary conditions for the existence of the trace. To this end, we explore the point of view that the trace cannot exist in the sense used in Definition~\ref{defi-Bsg} when $\mathcal{D}(\rn\setminus\Gamma)$, the set of test functions with compact support outside $\Gamma$, is dense in $\Bsih$.

In fact, assume that  $\mathcal{D}(\rn\setminus\Gamma)$ is dense in $\Btau(\Rn)$.
Let $\varphi\in C^\infty_0(\rn)$ with
$\varphi\equiv 1$ on a neighbourhood of $\Gamma$. Clearly, $\varphi\in
\Btau(\rn)$.  Then there exists a sequence $(\psi_k)_k \subset
\mathcal{D}(\rn\setminus\Gamma)$ with
\begin{equation}
\left\| \varphi - \psi_k | \Btau(\rn) \right\|
\xrightarrow[k\to\infty]{} 0.
\end{equation}
If the trace were to exist in the sense explained before, this would imply 
\begin{equation}
0 = \psi_k\raisebox{-0.4ex}[1ex][0.9ex]{$|_{\Gamma}$} = \tr{\Gamma}
\psi_k \xrightarrow[k\to\infty]{} \tr{\Gamma} \varphi =
\varphi\raisebox{-0.4ex}[1ex][0.9ex]{$|_{\Gamma}$} = 1 \quad
\text{in}\ L_p(\Gamma),
\end{equation}
which is a contradiction.

\begin{prep} \label{prep-dense}
So in order to disprove the existence of the
trace in certain cases it is sufficient to show the density of
$\mathcal{D}(\rn\setminus\Gamma)$ in $\Btau(\rn)$. Further, we may restrict ourselves to functions $\varphi\in
C^\infty_0(\rn)$ because of their density in $\Btau(\rn)$,
$0<p,q<\infty$, and approximate them by functions $\psi_k\in
\mathcal{D}(\rn\setminus\Gamma)$. We shall construct such  $\psi_k$
based on finite sums of the type
\[
\sum_{r\in I_k} \lambda_r \varphi_r,
\]
where $I_k$ is some finite index set, $\lambda_r\in\comp$ and $\varphi_r$
are compactly supported smooth functions with
\begin{equation}
\label{coincidence}
\sum_{r\in I_k} \lambda_r \varphi_r \varphi = \varphi
\end{equation}
on a neighbourhood (depending on $k$) of $\Gamma$, $\varphi$ as above, and
\begin{equation}
\left\| \sum_{r\in I_k} \lambda_r \varphi_r \varphi |
\Btau(\rn)\right\| \xrightarrow[k\to\infty]{} 0.
\label{tends0}
\end{equation}
Plainly, then $\psi_k := \varphi - \sum_{r\in I_k} \lambda_r
\varphi_r \varphi \in \mathcal{D}(\rn\setminus\Gamma)$, $k\in\nat$, 
\[
\left\| \varphi - \psi_k | \Btau(\rn)\right\|  = \left\| \sum_{r\in I_k} \lambda_r \varphi_r \varphi|
\Btau(\rn)\right\| \xrightarrow[k\to\infty]{} 0,
\]
and therefore the required density is proved.
\end{prep}

The limit in \eqref{tends0} above is going to be computed with the help of atomic representations for the functions considered. In order to explain how this will be done, we need first to consider a preliminary result which connects the Definition \ref{defi-hset} of $h$-sets with the structure of the atomic representation theorem given in Proposition \ref{atomicdecomposition}.

\begin{definition}
Given a set $\Gamma \subset \Rn$ and $r>0$, we shall denote by $\Gamma_r$ the neighbourhood of radius $r$ of $\Gamma$. That is,
$$\Gamma_r := \{ x \in \Rn : \, {\rm dist}(x,\Gamma) < r \}.$$
\end{definition}

\begin{lemma}
\label{optimcover}
Let $\Gamma$ be an $h$-set in $\Rn$ with $\mu$ a corresponding Radon measure. Let $j \in \N$. There is a cover $\{ Q(j,i) \}_{i=1}^{N_j}$ of $\, \Gamma_{2^{-j}}$ such that 
\begin{list}{}{\labelwidth7mm\leftmargin9mm}
\item[{\upshape\bfseries (i)\hfill}]
$N_j \sim h_j^{-1}$,
\item[{\upshape\bfseries (ii)\hfill}]
$Q(j,i)$ are cubes in $\Rn$ of side length $\, \sim 2^{-j}$,
\item[{\upshape\bfseries (iii)\hfill}]
each $Q(j,i)$ is, in fact, the union of $\, \sim 1$ grid cubes of a tessellation of $\Rn$ of step $2^{-j}$ in accordance with Remark \ref{tessellations},
\item[{\upshape\bfseries (iv)\hfill}]
each $Q(j,i)$ contains a ball of radius $\, \sim 2^{-j}$ centred at a point of $\, \Gamma$ such that any pair of them, for different values of $i$, are disjoint.
\eli
   
Denote by ${\cal Q}(j)$ the family of all grid cubes obtained in this way (that is, ${\cal Q}(j) = \{ Q_{jm} : Q_{jm} \subset Q(j,i) \mbox{ for some } i \}$) and consider the corresponding functions of a related partition of unity $\{ \varphi_{jm} \}$ of step $2^{-j}$ of $\Rn$ in accordance with Remark \ref{partitions}. That is, consider only the functions of that partition which are supported on $\frac{3}{2}Q_{jm}$ such that $Q_{jm} \in {\cal Q}(j)$.

The cover of $\Gamma_{2^{-j}}$ considered above can be chosen in such a way that the family 
$$
\{ \varphi_{jm} : \, Q_{jm} \in {\cal Q}(j) \}
$$
is a partition of unity of $\Gamma_{2^{-j}}$.

All equivalence constants in the estimates above are independent of $j$ and $i$.
\end{lemma}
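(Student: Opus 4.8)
The plan is to produce the cubes $Q(j,i)$ from a maximal packing of disjoint balls centred on $\Gamma$ at scale $2^{-j}$, and then to read off (i)--(iv) together with the partition-of-unity assertion from the packing. Fix $j\in\N$ and a small constant $c\in(0,1]$ (to be adjusted later). By compactness of $\Gamma$ choose a maximal finite collection of points $\gamma_1,\dots,\gamma_{N_j}\in\Gamma$ for which the balls $B(\gamma_i,c\,2^{-j})$ are pairwise disjoint. Disjointness already supplies the disjoint balls required in (iv), each of radius $\sim 2^{-j}$ and centred on $\Gamma$. Maximality means that no further such ball can be added, so every $\gamma\in\Gamma$ lies within distance $2c\,2^{-j}$ of some $\gamma_i$; hence $\{B(\gamma_i,2c\,2^{-j})\}_i$ covers $\Gamma$ and, enlarging the radii by one more $2^{-j}$, $\{B(\gamma_i,(2c+1)2^{-j})\}_i$ covers $\Gamma_{2^{-j}}$.

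The count (i) comes from testing the packing against $\mu$. Since the balls $B(\gamma_i,c2^{-j})$ are pairwise disjoint and $\supp\mu=\Gamma$, their traces on $\Gamma$ are disjoint, so by \eqref{hset} together with the doubling property (Remark~\ref{rem-doubling}), which yields $h(c2^{-j})\sim h_j$ for our fixed $c$,
\[
N_j\,h_j\ \sim\ \sum_{i=1}^{N_j}\mu\bigl(B(\gamma_i,c2^{-j})\bigr)\ \leq\ \mu(\Gamma)\ \sim\ 1 ,
\]
where $\mu(\Gamma)\sim1$ because $\mu$ is finite while $\mu(\Gamma)\geq\mu(B(\gamma,1))\sim h(1)>0$; this gives $N_j\lesssim h_j^{-1}$. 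For the reverse inequality I use the covering $\Gamma\subset\bigcup_i B(\gamma_i,2c2^{-j})$ and \eqref{hset}: $1\sim\mu(\Gamma)\leq\sum_i\mu(B(\gamma_i,2c2^{-j}))\lesssim N_j h_j$, so $N_j\gtrsim h_j^{-1}$. All constants depend only on $c$, $n$ and the $h$-set constants, not on $j$.

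To obtain (ii)--(iii) and the cover, fix a large constant $A$ (to be pinned down in the last step) and let $Q(j,i)$ be the smallest grid-aligned cube which is a union of grid cubes of the tessellation of step $2^{-j}$ (Remark~\ref{tessellations}) and contains $B(\gamma_i,A\,2^{-j})$. Since $A$ is fixed, $Q(j,i)$ has side length $\sim 2^{-j}$ and is built from $\sim1$ grid cubes, which is (ii) and (iii); as $A\geq 2c+1$ the family still covers $\Gamma_{2^{-j}}$, and as $A\geq c$ each $Q(j,i)$ contains the disjoint ball of (iv). Write $\mathcal{Q}_i(j)$ for the grid cubes composing $Q(j,i)$ and $\mathcal{Q}(j)=\bigcup_i\mathcal{Q}_i(j)$.

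Finally I turn to the partition-of-unity claim, which is the delicate point. Because $\sum_{m\in\zn}\varphi_{jm}\equiv1$ and each $\varphi_{jm}$ is supported in $\tfrac32 Q_{jm}$, for any $x$ the only terms of the full sum not vanishing at $x$ are those with $x\in\tfrac32 Q_{jm}$; hence $\sum_{Q_{jm}\in\mathcal{Q}(j)}\varphi_{jm}(x)=1$ as soon as \emph{every} grid cube $Q_{jm}$ with $x\in\tfrac32 Q_{jm}$ already belongs to $\mathcal{Q}(j)$ (no positivity of the $\varphi_{jm}$ is needed). It therefore suffices to arrange that every grid cube whose $\tfrac32$-dilate meets $\Gamma_{2^{-j}}$ lies in some $Q(j,i)$. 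If $\tfrac32 Q_{jm}$ meets $\Gamma_{2^{-j}}$ at a point $y$, then the centre of $Q_{jm}$ is within a bounded multiple of $2^{-j}$ of $y$, while $y$ is within $2^{-j}$ of some $\gamma\in\Gamma$ and $\gamma$ lies in some $B(\gamma_i,2c2^{-j})$; chaining these three estimates bounds the distance from $Q_{jm}$ to $\gamma_i$ by a fixed multiple of $2^{-j}$. Choosing $A$ larger than this fixed multiple forces $Q_{jm}\subset B(\gamma_i,A2^{-j})\subset Q(j,i)$, so $Q_{jm}\in\mathcal{Q}(j)$, as required. The main obstacle is precisely this last bookkeeping: one must propagate the $2^{-j}$-scale margins through the three estimates and fix $A$ once and for all so that the entire $\tfrac32$-neighbourhood of $\Gamma_{2^{-j}}$ is swept up, while keeping the number of grid cubes per $Q(j,i)$ bounded so that (ii)--(iii) survive.
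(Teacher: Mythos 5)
Your proof is correct and takes essentially the same route as the paper: a family of balls centred on $\Gamma$ at scale $2^{-j}$ whose pairwise disjointness gives the count $N_j\sim h_j^{-1}$, each then inflated to a union of boundedly many grid cubes, with the inflation constant chosen large enough that every grid cube whose $\frac{3}{2}$-dilate meets $\Gamma_{2^{-j}}$ is captured, which yields the partition-of-unity claim. The only differences are matters of self-containedness: the paper obtains the ball family and its count by citing Bricchi's optimal-cover lemma (disjoint one-third balls plus a bounded-multiplicity argument), where you rederive it from scratch via a maximal packing and two-sided measure estimates, and the paper enlarges the cubes in two explicit steps (rings of neighbouring grid cubes) rather than by one generic constant $A$ fixed at the end.
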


\begin{proof}
For each $j \in \N$, start by considering an optimal cover of $\Gamma$ in the sense of \cite[Lemma 1.8.3]{bricchi-diss}, that is, a cover by balls $B(\gamma_i,2^{-j-1})$ centred at points $\gamma_i \in \Gamma$ and such that $B(\gamma_i,\frac{2^{-j-1}}{3}) \cap B(\gamma_k,\frac{2^{-j-1}}{3}) = \emptyset$ for $i \not= k$. As was pointed out in \cite[Lemma 1.8.3]{bricchi-diss}, the number $N_{2^{-j-1}}$ of balls in such a cover satisfy
\begin{equation}
N_{2^{-j-1}} \sim h_{j+1}^{-1}.
\label{coverBricchi}
\end{equation}
It is not difficult to see that each $B(\gamma_i,2^{-j-1})$ is contained in the union of $2^n$ grid cubes of the type $Q_{jm}$ which together form a cube of side length $2^{-j+1}$ and which we shall provisionally denote by $Q(j,i)$. Then assertions (ii)-(iv) of the lemma are clear. As to (i), it is also clear, using \eqref{coverBricchi} and Proposition \ref{h-measfnt}, that $N_j \lesssim h_j^{-1}$. The reverse inequality follows from the fact that there can only be $\, \sim 1$ different $B(\gamma_k,2^{-j-1})$ giving rise to the same $Q(j,i)$, because each one of those produces also a ball $B(\gamma_k,\frac{2^{-j-1}}{3})$ which is disjoint from all the other balls obtained in a similar way. Eliminating the repetitions among the previously considered $Q(j,i)$ and redefining the $i$ accordingly, we get a possible cover of $\Gamma$ satisfying (i)-(iv) of the lemma.

In order to get from here a corresponding cover of $\Gamma_{2^{-j}}$, we just need to enlarge each one of the previous $Q(j,i)$ by adding to it all the $4^n-2^n$ surrounding grid cubes which touch it. It is clear that the new cubes $Q(j,i)$ obtained in this way also satisfy (i)-(iv) above.

Finally, if we further enlarge the preceding $Q(j,i)$ by adding to it all the $6^n-4^n$ surrounding grid cubes which touch it, clearly we do not destroy properties (i)-(iv) for the new cubes $Q(j,i)$ obtained in this way, this cover of $\Gamma_{2^{-j}}$ satisfying also the property that 
$$
\{ \varphi_{jm} : \, Q_{jm} \in {\cal Q}(j) \}
$$
is a partition of unity of $\Gamma_{2^{-j}}$.
\end{proof}

\begin{remark}
\label{gridcover}
It follows from the construction above that ${\cal Q}(j)$ is also a cover of $\Gamma_{2^{-j}}$ with $\sharp{\cal Q}(j) \sim h_j^{-1}$ (though a property like (iv) above cannot be guaranteed for each grid cube in ${\cal Q}(j)$).
\end{remark}

\begin{remark}
\label{optimalcover}
We shall use the expression ``optimal cover of $\Gamma_{2^{-j}}$'' when referring to a cover of $\Gamma_{2^{-j}}$ satisfying all the requirements of the lemma above.
\end{remark}

We return now to the question of calculating the limit \eqref{tends0} in Discussion \ref{prep-dense}.

\begin{prep}
\label{prep-dense-2}
The index set $I_k$, $k\in\nat$, will have the structure
\begin{equation}
\label{Ik}
I_k = \left\{ (j,m)\in\nat\times\Zn: j \in J_k, \ m \in M_j \right\},
\end{equation}
where $J_k$ and $M_j$ are appropriately chosen finite subsets of $\N$ and $\Zn$ respectively. In any case, for each $(j,m) \in I_k$ the relation ${\rm dist}(Q_{jm},\Gamma) \lesssim 2^{-j}$ should be satisfied. With $r=(j,m)\in I_k$, we shall also require that 
\begin{equation}
\label{support}
{\rm supp\,} \varphi_r \subset b\, Q_{jm},
\end{equation}
for some constant $b>1$, and, for arbitrary fixed $K \in \N$,
\begin{equation}
\label{derivatives}
|\Dd^\gamma \varphi_r(x)|\leq c_K 2^{j|\gamma|}, \quad x\in\rn, \quad \gamma\in\non \, \mbox{ with } \, |\gamma| \leq K.
\end{equation} 
We claim that then (apart from constants) the functions $a_{(j,m)} = \tau_j^{-1}
2^{j\frac{n}{p}} \varphi_{(j,m)}\varphi$ are $(\bm{\tau},p)_{K,-1}$-atoms (no moment conditions) located at $Q_{jm}$: the support condition is obvious; as for the
derivatives we calculate for $\alpha\in\non$, $|\alpha|\leq K$, that
\begin{align*}
|\Dd^\alpha a_{(j,m)}(x)| & \leq \ \tau_j^{-1}
2^{j\frac{n}{p}} \sum_{\gamma\leq \alpha} {\alpha\choose\gamma}|\Dd^{\gamma}\varphi_{(j,m)}(x)| |\Dd^{\alpha-\gamma}\varphi(x)| \\
&\leq \ \tau_j^{-1} 2^{j\frac{n}{p}} c'_K 2^{j|\alpha|} \left\|
\varphi | C^K(\rn)\right\| \\
& \leq C_{K,\varphi} \tau_j^{-1}
2^{j(\frac{n}{p}+|\alpha|)},
\end{align*}
fitting the needs of Definition~\ref{atoms-Bsigma}.

Since 
\[
\sum_{r\in I_k} \lambda_r \varphi_r \varphi = \sum_{(j,m)\in I_k} (\lambda_{(j,m)} C_{K,\varphi} \tau_j 2^{-j\frac{n}{p}}) (C_{K,\varphi}^{-1} a_{(j,m)}) ,
\]
to the effect of obtaining \eqref{tends0} we can then estimate from above the quasi-norm of this function by applying Proposition \ref{atomicdecomposition}, in case the parameters considered do not require moment conditions.

If moment conditions are required, in the presence of porosity of the $h$-set $\Gamma$ a standard procedure can be applied in order to identify the above sum at least in a smaller neighbourhood of $\Gamma$ (smaller than the one considered apropos of \eqref{coincidence}) with an atomic representation

\[
\sum_{(j,m)\in I_k} (c_2^{-1} \lambda_{(j,m)} C_{K,\varphi} \tau_j 2^{-j\frac{n}{p}}) (c_2 C_{K,\varphi}^{-1} \tilde{a}_{(j,m)})
\]
with the appropriate moment conditions and, as is apparent, producing essentially the same upper estimate by application of Proposition \ref{atomicdecomposition}. So, in this case we replace, in Discussion \ref{prep-dense}, $\sum_{r \in I_k} \lambda_r \varphi_r \varphi$ by the above sum, keeping a property like \eqref{coincidence}. In any case, following Proposition  \ref{atomicdecomposition}, the quasi-norms of both sums in $\Btau(\Rn)$ are estimated by
\begin{equation}
\label{atomicuppest}
 \lesssim  \left( \sum_{j\in J_k} \left( \sum_{m \in M_j} | \lambda_{(j,m)} \tau_j 2^{-j\frac{n}{p}} |^p \right)^{q/p} \right)^{1/q}.
\end{equation}
\end{prep}

The standard procedure referred to above, leading to the creation of atoms with appropriate moment conditions, though still coinciding with the former atoms in a somewhat smaller neighbourhood of a set satisfying the porosity condition, comes from \cite{TW} and was, for example, used in \cite{Cae-00} and, more recently, in \cite{T-dicho}. It is quite technical, but we give here a brief description, for the convenience of the reader:

	For each $Q_{jm}$ as above, namely with ${\rm dist}(Q_{jm},\Gamma) \lesssim 2^{-j}$, fix an element of $\Gamma$, $x_{j,m}$ say, at a distance $\, \lesssim 2^{-j}$ of $Q_{jm}$. Clearly, there is a constant $c_1 > 0$ 
such that $Q_{jm} \subset B(x_{j,m},c_1 2^{-j})$. Without loss of generality, we can assume
that $0 < c_1 2^{-j} < 1$, so that, from the fact $\Gamma$ satisfies the porosity condition given in Definition \ref{porosity}, 
there exists $y_{j,m} \in \Rn$ such that $B(y_{j,m}, \eta c_1 2^{-j}) \subset B(x_{j,m}, c_1 2^{-j})$
and $B(y_{j,m}, \eta c_1 2^{-j}) \cap \Gamma = \emptyset $, where $0 < \eta <1$ is as in 
Definition \ref{porosity}. Obviously, as mentioned in \eqref{ball-2}, we can also say that ${\rm dist }(B(y_{j,m}, \eta c_1 2^{-j-1}) , \Gamma ) \geq \eta c_1 2^{-j-1}$.

	Fix a natural $L$ as in \eqref{moment-Bsi} --- with $\tau$ instead of $\sigma$ --- and let $\psi _ \gamma$, with $\gamma \in \N_0^n$
and $|\gamma | \leq L$, be $C^\infty $-functions $\psi _\gamma $ with support in the open ball ${\stackrel{\; \circ }{B}}(0,1)$ and satisfying the property
$$	\forall \beta ,\gamma \in \N_0^n \mbox{ with } |\beta |, |\gamma | \leq L, \: \int_{\Rn}{x^\beta 
\psi _\gamma (x)} \dint x = \delta_{\beta \gamma },$$

\noindent where $\delta_{\beta \gamma }$ stands for the Kronecker symbol (for a proof of the existence of such functions, see \cite[p. 665]{TW}).

Define, for each $j,m$ as above,
$$	d_{\gamma}^{j,m} \equiv \int_{\Rn}{x^{\gamma} a_{(j,m)} (\eta c_1 2^{-j-1} x + y_{j,m})} \dint x,
 \; \gamma \in \Non \mbox{ with } |\gamma | \leq L,$$

\noindent and
$$	\tilde{a}_{(j,m)}(z) = a_{(j,m)}(z) - \sum_{|\gamma| \leq L}{d_{\gamma }^{j,m} \psi _\gamma ( (\eta c_1
)^{-1} 2^{j+1} (z - y_{j,m}))}, \; z \in \Rn.$$

\noindent It is easy to see that
$$	\int_{\Rn} {( (\eta c_1)^{-1} 2^{j+1} (z - y_{j,m})) ^\beta \tilde{a}_{(j,m)} (z)} \dint z = 0, \; \beta \in \Non 
\mbox{ with } |\beta| \leq L,$$

\noindent and, consequently (by Newton's binomial formula),
$$	\int_{\Rn} {z^\beta \tilde{a}_{(j,m)} (z)} \dint z = 0, \; \beta \in \Non \mbox{ with } |\beta| \leq L.$$

\noindent This means that each $\tilde{a}_{(j,m)}$ has the required moment conditions for the atoms in the
atomic representations of functions of $\Btau(\Rn)$. Actually, it is not difficult to see that there exists a 
positive constant $c_2$ such that $c_2 C_{K,\varphi}^{-1}\tilde{a}_{(j,m)}$ is a $(\tau,p)_{K,L}$-atom located at $Q_{jm}$.

Since, from the 
hypotheses and choices that have been made, $\tilde{a}_{(j,m)} = a_{(j,m)}$ on $\Gamma_{\eta c_1 2^{-j-1}}$, we are through.

\begin{proposition}\label{dicholemma}
Let $\Gamma$ be an $h$-set satisfying the porosity condition, $0< p,q<\infty$ and 
$\bm{\tau}$ an admissible sequence.
Assume that
\begin{equation}
\bm{\tau}^{-1} \bm{h}^{1/p} \paren{n}^{1/p} \not\in \ell_\infty.
\label{dense-10}
\end{equation}
Then  $\mathcal{D}(\rn\setminus \Gamma)$ is dense in
$B^{\bm{\tau}}_{p,q}(\rn)$, therefore 
\begin{equation}
\tr{\Gamma} B^{\bm{\tau}}_{p,q}(\rn) \quad \mbox{cannot exist.}
\label{dense-9}
\end{equation}
\end{proposition}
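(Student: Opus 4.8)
The plan is to realise concretely the strategy laid out in Discussions \ref{prep-dense} and \ref{prep-dense-2}, committing to a single dyadic level for each approximant and reading off the decisive estimate from the cardinality count in Lemma \ref{optimcover}. By Discussion \ref{prep-dense} it suffices, for an arbitrary $\varphi\in C^\infty_0(\rn)$, to produce functions $\psi_k\in\DG$ with $\|\varphi-\psi_k\mid\Btau(\rn)\|\to 0$, and by Discussion \ref{prep-dense-2} this reduces to exhibiting, for each $k$, a finite family $(\varphi_r)_{r\in I_k}$ and coefficients $(\lambda_r)$ satisfying \eqref{coincidence}, \eqref{support} and \eqref{derivatives} for which the atomic bound \eqref{atomicuppest} tends to $0$. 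Since by hypothesis \eqref{dense-10} the sequence $\left(\tau_j^{-1}h_j^{1/p}2^{jn/p}\right)_j$ is unbounded, I would first fix a strictly increasing sequence $(j_k)_k\subset\N$ with $\tau_{j_k}^{-1}h_{j_k}^{1/p}2^{j_k n/p}\to\infty$ as $k\to\infty$.

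For the construction at level $j_k$ I would invoke Lemma \ref{optimcover}: take an optimal cover of $\Gamma_{2^{-j_k}}$, set $I_k=\{(j_k,m):Q_{j_k m}\in\mathcal{Q}(j_k)\}$, choose $\varphi_{(j_k,m)}:=\varphi_{j_k m}$ the associated partition-of-unity functions of step $2^{-j_k}$, and take all coefficients $\lambda_{(j_k,m)}:=1$. The support and derivative requirements \eqref{support}, \eqref{derivatives} then hold by Remark \ref{partitions}, and the grid cubes of $\mathcal{Q}(j_k)$ lie at distance $\lesssim 2^{-j_k}$ from $\Gamma$, as demanded in Discussion \ref{prep-dense-2}, because they are contained in the cubes of the optimal cover, each of which carries a point of $\Gamma$. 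Since $\{\varphi_{j_k m}:Q_{j_k m}\in\mathcal{Q}(j_k)\}$ is a partition of unity of $\Gamma_{2^{-j_k}}$, we obtain $\sum_m\varphi_{j_k m}\,\varphi=\varphi$ on the neighbourhood $\Gamma_{2^{-j_k}}$ of $\Gamma$, which is \eqref{coincidence}; hence $\psi_k:=\varphi-\sum_m\varphi_{j_k m}\varphi$ vanishes near $\Gamma$ and lies in $\DG$.

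It remains to estimate the norm. With only the single level $j=j_k$ and $\lambda_{(j_k,m)}=1$, the bound \eqref{atomicuppest} collapses to $\tau_{j_k}2^{-j_k n/p}\left(\sharp\mathcal{Q}(j_k)\right)^{1/p}$; using $\sharp\mathcal{Q}(j_k)\sim h_{j_k}^{-1}$ from Remark \ref{gridcover}, this is $\sim\tau_{j_k}2^{-j_k n/p}h_{j_k}^{-1/p}=\left(\tau_{j_k}^{-1}h_{j_k}^{1/p}2^{j_k n/p}\right)^{-1}\to 0$ by the choice of $(j_k)$. If the parameters force moment conditions in Proposition \ref{atomicdecomposition}, I would replace the atoms $a_{(j_k,m)}$ by the modified atoms $\tilde a_{(j_k,m)}$ built from the porosity of $\Gamma$ as described after Discussion \ref{prep-dense-2}; these carry the required vanishing moments, coincide with $a_{(j_k,m)}$ on the smaller neighbourhood $\Gamma_{\eta c_1 2^{-j_k-1}}$ (so that \eqref{coincidence} survives on a, possibly smaller, neighbourhood of $\Gamma$, keeping $\psi_k\in\DG$), and yield the same estimate \eqref{atomicuppest}. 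This proves $\|\varphi-\psi_k\mid\Btau(\rn)\|\to 0$, hence the density of $\DG$ in $\Btau(\rn)$; the contradiction argument at the beginning of Section \ref{sect-2-2} then delivers \eqref{dense-9}.

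Because the atomic decomposition, the adapted partitions of unity, and the porosity-based moment correction have all been prepared in advance, the real content lies in the bookkeeping of the two middle paragraphs: committing to one dyadic level per approximant and taking $\lambda\equiv 1$. The point I would regard as the crux is the identification $\sharp\mathcal{Q}(j)\sim h_j^{-1}$, which converts \eqref{atomicuppest} into exactly the reciprocal of the sequence appearing in \eqref{dense-10}; thus the assumption $\bm{\tau}^{-1}\bm{h}^{1/p}\paren{n}^{1/p}\notin\ell_\infty$ is precisely what is needed, and no more, to drive the approximation errors to $0$ along a subsequence.
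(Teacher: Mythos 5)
Your proposal is correct and follows essentially the same route as the paper's own proof: the same extraction of a subsequence $(j_k)$ from \eqref{dense-10}, the same single-level construction per $k$ using an optimal cover of $\Gamma_{2^{-j_k}}$ with partition-of-unity functions and coefficients $\lambda\equiv 1$, the same cardinality estimate $\sharp\mathcal{Q}(j_k)\sim h_{j_k}^{-1}$ turning \eqref{atomicuppest} into the reciprocal of the sequence in \eqref{dense-10}, and the same porosity-based moment correction. Nothing to add.
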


\begin{proof}
By standard
reasoning we conclude from \eqref{dense-10} that there exists a
divergent subsequence of $\bm{\tau}^{-1} \bm{h}^{1/p} \paren{n}^{1/p}$. More
precisely, there is a strictly increasing sequence $(j_k)_{k\in\nat} \subset \nat$ such that
\begin{equation}
\lim_{k\to\infty} \ \tau_{j_k} \
  h_{j_k}^{-\frac1p} \ 2^{-j_k \frac{n}{p}} \ = \ 0.
\label{dense-3}
\end{equation}
For each $k\in \N$, consider an optimal cover of $\Gamma_{2^{-j_k}}$, in the sense of Remark \ref{optimalcover}, and  
follow our Discussions \ref{prep-dense} and \ref{prep-dense-2} with $I_k$ from \eqref{Ik} given by
$$
I_k = \{ (j,m) \in \N \times \Zn : \, j=j_k, \, Q_{jm} \in {\cal Q}(j) \}
$$
(that is, $J_k = \{ j_k \}$ and $M_j = M_{j_k} = \{ m \in \Zn : \,  Q_{j_km} \in {\cal Q}(j_k) \}$), $\varphi_{(j,m)} = \varphi_{jm}$ and $\lambda_{(j,m)} = 1$. Due to Remark \ref{partitions}, this fits nicely into Discussions \ref{prep-dense} and \ref{prep-dense-2}, in particular \eqref{coincidence}, \eqref{support} and \eqref{derivatives} hold. It then follows, specially from \eqref{atomicuppest}, that the quasi-norm of the sum in \eqref{tends0} (or of an alternative similar sum, as discussed in Discussion \ref{prep-dense-2} apropos of the consideration of moment conditions) is
\begin{align*}
\lesssim  \left( \sum_{Q_{j_km} \in {\cal Q}(j_k)} \tau_{j_k}^p
2^{-j_k n} \right)^{\frac1p} 
& = \ \tau_{j_k} \
2^{-j_k\frac{n}{p}}\ (\sharp {\cal Q}(j_k))^{\frac1p} \\ 
& \sim \ \tau_{j_k} \ 2^{-j_k\frac{n}{p}}\
h_{j_k}^{- \frac1p} \ \xrightarrow[k\to\infty]{} \ 0,
\end{align*}
where we have also used Remark \ref{gridcover} and \eqref{dense-3}. Thus \eqref{tends0} holds (possibly with an alternative similar sum if moment conditions are required, as mentioned above) and Discussion \ref{prep-dense} then concludes the proof.
\end{proof}

\begin{remark}
\label{noporosity}
The porosity condition was only used in the proof above to guarantee that atoms with appropriate moment conditions can be considered. Thus the proposition holds without assuming porosity of $\Gamma$ as long as $L$ can be taken equal to -1 in the atomic representation theorem for $\Btau(\Rn)$ (cf. Proposition \ref{atomicdecomposition} with $\bm{\tau}$ instead of $\bm{\sigma}$).
\end{remark}

The next result shows that when $p<q$ we can conclude as in the preceding proposition with an hypothesis weaker than \eqref{dense-10}.

\begin{proposition}\label{dicholemma2}
Let $\Gamma$ be an $h$-set satisfying the porosity condition, $0< p < q <\infty$,
$\bm{\tau}$ admissible. 
Assume that
\begin{equation}
\limsup \bm{\tau}^{-1} \bm{h}^{1/p} \paren{n}^{1/p} >0.
\label{dense-101}
\end{equation}
Then  $\mathcal{D}(\rn\setminus \Gamma)$ is dense in
$B^{\bm{\tau}}_{p,q}(\rn)$, therefore 
\begin{equation}
\tr{\Gamma} B^{\bm{\tau}}_{p,q}(\rn) \quad \mbox{cannot exist.}
\label{dense-91}
\end{equation}
\end{proposition}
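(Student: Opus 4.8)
The plan is to follow the strategy of Proposition \ref{dicholemma}, but to distribute the approximating atoms over \emph{many} dyadic scales, which is precisely where the hypothesis $p<q$ will be exploited. First I would reformulate \eqref{dense-101}: since $\limsup \bm{\tau}^{-1}\bm{h}^{1/p}\paren{n}^{1/p}>0$, there are a constant $c>0$ and a strictly increasing sequence $(j_l)_{l\in\N}\subset\N$ with $\tau_{j_l}^{-1}h_{j_l}^{1/p}2^{j_l n/p}\geq c$ for all $l$, equivalently $\tau_{j_l}\,2^{-j_l n/p}\,h_{j_l}^{-1/p}\leq c^{-1}$. In contrast with \eqref{dense-3}, these numbers need \emph{not} tend to $0$, so the single-scale construction of Proposition \ref{dicholemma} (which produced a bound $\sim \tau_{j_k}2^{-j_k n/p}h_{j_k}^{-1/p}$) no longer gives anything small; the remedy is to spread the construction over $N$ scales and to send $N\to\infty$.

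For each $k$ I would fix $N=N(k)\to\infty$ and work with the scales $j_1<\dots<j_N$. The idea is to split $\Gamma$ into $N$ pieces of comparable $\mu$-measure and to cover the $l$-th piece at the scale $j_l$. Concretely, starting from the optimal cover of $\Gamma_{2^{-j_1}}$ of Lemma \ref{optimcover} at the coarsest scale $j_1$, I would group the $\sim h_{j_1}^{-1}$ grid cubes of ${\cal Q}(j_1)$ into $N$ disjoint families, the $l$-th seeing a subset $P_l\subset\Gamma$ with $\mu(P_l)\sim N^{-1}$. Over each $P_l$ I then keep only the part of an optimal cover of $\Gamma_{2^{-j_l}}$ sitting above it; since each grid cube of step $2^{-j_l}$ meeting $\Gamma$ carries $\mu$-measure $\sim h_{j_l}$, the corresponding index set $M_{j_l}$ satisfies $\sharp M_{j_l}\sim N^{-1}h_{j_l}^{-1}$. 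Taking $I_k$ as in \eqref{Ik} to consist of these pairs $(j_l,m)$, with $\lambda_{(j,m)}=1$ and $\varphi_{(j,m)}=\varphi_{jm}$, I would arrange (see the obstacle below) that $\sum_{(j,m)\in I_k}\varphi_{jm}=1$ on a small, $N$-dependent neighbourhood of $\Gamma$, so that \eqref{coincidence} holds, and $\eqref{support}$, $\eqref{derivatives}$ hold by Remark \ref{partitions}. Exactly as in Proposition \ref{dicholemma}, porosity enters only to upgrade these to atoms carrying the moment conditions demanded by Proposition \ref{atomicdecomposition}.

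The gain appears in the atomic bound \eqref{atomicuppest}. With the above data, the inner $\ell_p$-sum at scale $j_l$ is $\sum_{m\in M_{j_l}}|\tau_{j_l}2^{-j_l n/p}|^{p}\sim N^{-1}h_{j_l}^{-1}\tau_{j_l}^{p}2^{-j_l n}$, whose $q/p$-th power equals $N^{-q/p}\bigl(\tau_{j_l}2^{-j_l n/p}h_{j_l}^{-1/p}\bigr)^{q}\leq N^{-q/p}c^{-q}$. Summing the $N$ outer terms and taking the $q$-th root,
\[
\Bigl(\sum_{l=1}^{N}\bigl(\textstyle\sum_{m\in M_{j_l}}|\tau_{j_l}2^{-j_l n/p}|^{p}\bigr)^{q/p}\Bigr)^{1/q}\ \lesssim\ N^{-1/p}\,(N c^{-q})^{1/q}\ =\ c^{-1}\,N^{\,1/q-1/p}.
\]
Because $p<q$ we have $1/q-1/p<0$, so letting $N\to\infty$ drives the quasi-norm in \eqref{tends0} to $0$. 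Discussion \ref{prep-dense} then yields that $\mathcal{D}(\rn\setminus\Gamma)$ is dense in $\Btau(\rn)$, whence \eqref{dense-91}.

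The main obstacle is the bookkeeping concealed in the coincidence requirement \eqref{coincidence}. The partial partitions of unity attached to the pieces $P_l$ live at \emph{different} scales $j_l$, so where two neighbouring pieces meet, their supports overlap in thin slabs and the naive sum need not equal $1$ there. Turning $\{\varphi_{jm}:(j,m)\in I_k\}$ into a genuine partition of unity equal to $1$ on a full neighbourhood of $\Gamma$ — while preserving both the counts $\sharp M_{j_l}\sim N^{-1}h_{j_l}^{-1}$ and the bounds \eqref{support}, \eqref{derivatives} — is the technical heart of the argument. I expect this to be achievable by the same scale-localised construction underlying Lemma \ref{optimcover} and Discussion \ref{prep-dense-2}, with the overlap corrections at the boundaries between pieces contributing only negligibly to \eqref{atomicuppest}, so that the displayed estimate is unaffected.
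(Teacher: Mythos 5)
Your skeleton is the paper's own: rewrite \eqref{dense-101} as boundedness of $\bm{\sigma}:=\bm{\tau}\bm{h}^{-1/p}\paren{n}^{-1/p}$ along a subsequence $(j_\ell)$, split $\Gamma$ into many pieces of small $\mu$-measure, cover each piece at its own scale taken from that subsequence, and win from $q/p>1$ in the outer sum. Your final computation $c^{-1}N^{1/q-1/p}\to 0$ is the same as the paper's, which splits into groups of measure $\sim i^{-1}$, $i=k,\dots,k_1$, and ends with the tail $\bigl(\sum_{i\ge k}i^{-q/p}\bigr)^{1/q}\to 0$; equal versus harmonic weights is an inessential difference. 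One small repair: a grid cube merely \emph{meeting} $\Gamma$ carries measure $\lesssim h_{j_l}$ but by no means $\gtrsim h_{j_l}$, so the bound $\sharp M_{j_l}\lesssim N^{-1}h_{j_l}^{-1}$ must come from the disjoint balls centred at points of $\Gamma$ provided by Lemma \ref{optimcover}(iv), as in the paper's measure-counting argument.

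However, the obstacle you flag and postpone is a genuine gap, and it sits exactly where the paper's proof has its one key device, which your proposal lacks. A family obtained by \emph{selecting} single-scale partition functions $\varphi_{j_lm}$, $m\in M_{j_l}$, at different scales $j_l$ cannot satisfy \eqref{coincidence}: at the interface between the regions treated at scales $j_l$ and $j_{l+1}$ the selected functions do not sum to $1$, no selection can repair this, and modifying the functions near the interfaces would wreck the single-scale derivative bounds \eqref{derivatives} that the atomic estimate needs. The paper avoids any correction by a product construction: at the coarsest scale $j(k)$ it groups the partition functions into sums $\psi_i$, one per piece, so that $\sum_i\psi_i\varphi=\varphi$ on $\Gamma_{2^{-j(k)}}$ exactly; the functions fed into \eqref{atomicuppest} are then the \emph{products} $\varphi_{j(i)m}\psi_i$, where $\{\varphi_{j(i)m}\}_m$ is the \emph{full} partition of unity at the finer scale $j(i)$. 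Because $\sum_m\varphi_{j(i)m}=1$ on $\Gamma_{2^{-j(i)}}$, the double sum collapses, $\sum_i\sum_m\varphi_{j(i)m}\psi_i\varphi=\sum_i\psi_i\varphi=\varphi$ on $\Gamma_{2^{-j(k_1)}}$, giving \eqref{coincidence} with no interface corrections at all; each product is supported in $\frac32 Q_{j(i)m}$ and satisfies \eqref{derivatives} at scale $j(i)$ by Leibniz, since the scales are ordered; and the number of non-vanishing products at scale $j(i)$ is $\lesssim i^{-1}h_{j(i)}^{-1}$, because a $2^{-j(k)}$-neighbourhood of $\mathrm{supp}\,\psi_i$ has $\mu$-measure $\lesssim i^{-1}$ while each counted cube contains one of a family of disjoint balls of measure $\sim h_{j(i)}$. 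With this device your outline becomes the paper's proof; without it, the coincidence requirement — the technical heart, as you yourself note — is not established.
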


\begin{proof}
Denote $\bm{\sigma} := \bm{\tau} \bm{h}^{-1/p} \paren{n}^{-1/p}$.
From \eqref{dense-101} it follows the existence of a constant $c>0$ and a strictly increasing sequence $(j_\ell)_{\ell \in \N} \subset \N$ such that 
\begin{equation}
\sigma_{j_\ell}^{-1} \geq c, \quad \ell \in \N.
\label{boundedaway}
\end{equation}

Given $k \in \N$, let $k_2 \in \N$ be such that
\begin{equation}
\frac{1}{k}+\frac{1}{k+1}\ldots+\frac{1}{k_2} \geq 2,
\label{harmon}
\end{equation}
which clearly exists. Then choose $j(k)$ large enough in order that it is possible to consider an optimal cover of $\Gamma_{2^{-j(k)}}$ in the sense of Remark \ref{optimalcover} with a number $N_{j(k)}$ of cubes such that $k_2^{-1} N_{j(k)} \geq 2$. Again, this is clearly possible, because of $N_{j(k)} \sim h_{j(k)}^{-1}$ (cf. Lemma \ref{optimcover}) and properties of gauge functions (cf. Definition \ref{defi-hset}). Actually, we shall also require that $j(k)$ be chosen in such a way it coincides with one of the $j_\ell$'s above, which is also possible.

Hence, for $i = k, k+1, \ldots, k_2$, $\whole{i^{-1}N_{j(k)}} > i^{-1}N_{j(k)} -1 \geq \frac{1}{2} i^{-1}N_{j(k)}$ and therefore, using \eqref{harmon},
$$
\whole{k^{-1}N_{j(k)}} + \whole{(k+1)^{-1}N_{j(k)}} + \ldots + \whole{k_2^{-1}N_{j(k)}} \geq N_{j(k)}.
$$

Let $k_1 \in \N$ be the smallest number such that
\begin{equation}
\whole{k^{-1}N_{j(k)}} + \whole{(k+1)^{-1}N_{j(k)}} + \ldots + \whole{k_1^{-1}N_{j(k)}} \geq N_{j(k)}.
\label{k1}
\end{equation}

On the other hand, following Lemma \ref{optimcover} and Discussion \ref{prep-dense},
\begin{equation}
\sum_{Q_{j(k)m} \in {\cal Q}(j(k))} \varphi_{j(k)m} \varphi = \varphi \quad \mbox{on } \, \Gamma_{2^{-j(k)}}.
\label{globalsum}
\end{equation}

Now we associate the terms of this sum in the following way: first consider the $m$'s such that $Q_{j(k)m}$ are grid subcubes of the first $\whole{k^{-1}N_{j(k)}}$ cubes of the optimal cover above; next consider the $m$'s such that $Q_{j(k)m}$ are grid subcubes of the following $\whole{(k+1)^{-1}N_{j(k)}}$ cubes of the same cover. And so on until the consideration of the $m$'s such that $Q_{j(k)m}$ are grid subcubes of the at most $\whole{k_1^{-1}N_{j(k)}}$ cubes of the optimal cover. This process certainly leads to repetition of grid cubes, so, in order not to affect the sum above we make the convention that $\varphi_{j(k)m}$ is replaced by zero any time it corresponds to a grid cube that has already been considered before. That is, $\varphi_{j(k)m}$ is replaced by $\tilde{\varphi}_{j(k)m}$ which can either be $\varphi_{j(k)m}$ or zero, according to what was just explained. 

Denote by $\psi_i$, $i=k, k+1, \ldots, k_1$, the sum of the $\tilde{\varphi}_{j(k)m}$'s corresponding to each association made above, so that the sum in \eqref{globalsum} can be written as
\begin{equation}
\sum_{i=k}^{k_1} \psi_i \varphi,
\label{assocsum}
\end{equation}

which, of course, still equals $\varphi$ on $\Gamma_{2^{-j(k)}}$.

The next thing to do is to choose, for each $i=k+1, \ldots, k_1$, $j(i)$ in such a way that
\begin{equation}
j(k) < j(k+1) < \ldots < j(k_1)
\label{orderedjays}
\end{equation}
and, moreover, each $j(i)$ coincides with one of the $j_\ell$'s used to get \eqref{boundedaway}. Then consider optimal covers of $\Gamma_{2^{-j(i)}}$ and the sum
\begin{equation}
\sum_{i=k}^{k_1} \sum_{Q_{j(i)m} \in {\cal Q}(j(i))} \varphi_{j(i)m} \psi_i \varphi,
\label{splitsum}
\end{equation}

which, by Lemma \ref{optimcover} and \eqref{orderedjays}, equals $\varphi$ on $\Gamma_{2^{-j(k_1)}}$.
We know, following Remark \ref{gridcover}, that each inner sum above has $\, \sim h_{j(i)}^{-1}$ terms. However, because of the product structure of the latter and the support of each $\psi_i$, actually only a smaller number of terms are non-zero. We claim that the latter number is $\, \lesssim i^{-1}  h_{j(i)}^{-1}$. This can be seen in the following way: for fixed $i=k,k+1,\ldots,k_1$, only the $\varphi_{j(i)m}$ such that 
$$
{\rm supp}\, \varphi_{j(i)m} \cap {\rm supp}\, \psi_i \not= \emptyset
$$
are of interest; notice that this implies that the cubes of the optimal cover of $\Gamma_{2^{-j(i)}}$ which contain the grid cubes at which such functions $\varphi_{j(i)m}$ are located are contained in a neighbourhood of radius $\, \sim 2^{-j(i)}$ of ${\rm supp}\, \psi_i$ which, in turn, is contained in the union of neighbourhoods of radius $\, \sim 2^{-j(k)}$ of the $\, \lesssim i^{-1} h_{j(k)}^{-1}$ cubes of the optimal cover of $\Gamma_{2^{-j(k)}}$ which were used to form $\psi_i$; since these union measures $\, \lesssim i^{-1} h_{j(k)}^{-1} h_{j(k)}$, that is, $\, \lesssim i^{-1}$, and the cubes of the optimal cover of $\Gamma_{2^{-j(i)}}$ contain disjoint balls of radius $\, \sim 2^{-j(i)}$, so measuring $\, \sim h_{j(i)}$ each, the number of such cubes that are being considered here must be $\, \lesssim i^{-1} h_{j(i)}^{-1}$; since, clearly, the same estimate holds for the family of grid subcubes of such cubes, our claim is proved.

We write then \eqref{splitsum} in the form
\begin{equation}
\sum_{i=k}^{k_1} \sum_{Q_{j(i)m} \in {\cal Q}(j(i))}' \varphi_{j(i)m} \psi_i \varphi,
\label{redsplitsum}
\end{equation}
where the prime over the sum means that we are in fact taking only $\, \lesssim i^{-1} h_{j(i)}^{-1}$ of the terms, according to the discussion above, without changing the value of \eqref{splitsum}.

Now we remark that, for $i=k,k+1,\ldots,k_1$,
$$
{\rm supp}\, \varphi_{j(i)m} \psi_i \subset {\rm supp}\, \varphi_{j(i)m} \subset \frac{3}{2} Q_{j(i)m}
$$
and, due to Leibniz formula, \eqref{orderedjays} and \eqref{derivcontrol}, for a fixed $K \in \N$ there is a positive constant $c_K$ such that
$$
|\Dd^\gamma(\varphi_{j(i)m} \psi_i)(x)| \leq c_K\, 2^{j(i)|\gamma|}, \quad x \in \Rn,\;\; \gamma \in \Non \, \mbox{ with } \,|\gamma| \leq K.
$$
That is, \eqref{coincidence}, \eqref{support} and \eqref{derivatives} hold for $\lambda_r = \lambda_{(j(i),m)} = 1$ and  $\varphi_r = \varphi_{(j(i),m)} = \varphi_{j(i)m} \psi_i$ with $r=(j(i),m)$ belonging to a set $I_k$ as in \eqref{Ik}, though more involved to describe: here we have $J_k = \{ j(k), j(k+1), \ldots,j(k_1) \}$ and each $M_{j(i)}$, with $i=k,k+1,\ldots,k_1$, is the set of $m$'s considered in the corresponding inner sum in \eqref{redsplitsum}.

It then follows, specially from \eqref{atomicuppest}, that the quasi-norm of the sum in \eqref{tends0} (or of an alternative similar sum, as discussed in Discussion \ref{prep-dense-2} apropos of the consideration of moment conditions) is
\begin{eqnarray*}
 & \lesssim & \left( \sum_{i=k}^{k_1} \left( \sum_{Q_{j(i)m} \in {\cal Q}(j(i))}' \tau_{j(i)}^p 2^{-j(i)n} \right)^{q/p} \right)^{1/q} \\
 & \lesssim & \left( \sum_{i=k}^{k_1} \tau_{j(i)}^q 2^{-j(i)n\frac{q}{p}} i^{-\frac{q}{p}} h_{j(i)}^{-\frac{q}{p}} \right)^{1/q} \\
 & = & \left( \sum_{i=k}^{k_1} i^{-\frac{q}{p}} \sigma_{j(i)}^q \right)^{1/q} \\
 & \lesssim & \left( \sum_{i=k}^\infty i^{-\frac{q}{p}} \right)^{1/q},
\end{eqnarray*}
the last estimate being a consequence of \eqref{boundedaway} and the choice of the $j(i)$ in \eqref{orderedjays}. 

Using now the hypothesis $0 < p < q < \infty$, we have that the last expression above tends to zero when $k$ tends to infinity, so that the result follows from Discussion \ref{prep-dense}.
\end{proof}

\begin{remark}
An observation corresponding to the one made in Remark \ref{noporosity} holds also here.
\end{remark}

The next result shows that when $1<q$ we can also conclude as in Proposition \ref{dicholemma} with an hypothesis weaker than \eqref{dense-10}. 

\begin{proposition}
Let $\Gamma$ be an $h$-set satisfying the porosity condition, $1<q<\infty$, $\bm{\tau}$ admissible. Assume that
\begin{equation}
\bm{\tau}^{-1} \bm{h}^{1/p} \paren{n}^{1/p} \notin \ell_{q'}.
\label{dense-2}
\end{equation}
Then
$\mathcal{D}(\rn\setminus \Gamma)$ is dense in $B^{\bm{\tau}}_{p,q}(\rn)$, therefore
$$\tr{\Gamma} B^{\bm{\tau}}_{p,q}(\rn) \quad \mbox{cannot exist}.$$
\label{theo-density}
\end{proposition}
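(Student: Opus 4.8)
The plan is to combine the machinery of Discussions~\ref{prep-dense} and~\ref{prep-dense-2} with a multilevel construction in the spirit of the proof of Proposition~\ref{dicholemma2}, but now distributing a single partition of unity \emph{across} several levels, with weights dictated by H\"older duality. Writing $\bm{\sigma} := \bm{\tau}\bm{h}^{-1/p}\paren{n}^{-1/p}$, so that $\sigma_j = \tau_j\, h_j^{-1/p}\, 2^{-jn/p}$, hypothesis~\eqref{dense-2} reads $\bm{\sigma}^{-1}\notin\ell_{q'}$, that is, $\sum_{j}\sigma_j^{-q'} = \infty$. This divergence, together with the assumption $q>1$ (so that $q'<\infty$), is the resource I would exploit.

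For each $k\in\nat$ I would first select finitely many large levels $j_1 < j_2 < \cdots < j_{M}$ (with $M=M(k)$) such that $A_k := \sum_{i=1}^{M}\sigma_{j_i}^{-q'}\xrightarrow[k\to\infty]{}\infty$; this is possible precisely because the series diverges. At each selected level $j_i$ I take a full optimal cover of $\Gamma_{2^{-j_i}}$ in the sense of Remark~\ref{optimalcover} and set, for every $m$ with $Q_{j_i m}\in{\cal Q}(j_i)$,
\[
\varphi_{(j_i,m)} := \varphi_{j_i m}, \qquad \lambda_{(j_i,m)} := w_i, \qquad w_i := \frac{\sigma_{j_i}^{-q'}}{A_k},
\]
which defines an index set $I_k$ of the form~\eqref{Ik} with $J_k = \{ j_1,\dots,j_M\}$ and each $M_{j_i}=\{m: Q_{j_i m}\in{\cal Q}(j_i)\}$. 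Since $\sum_{i=1}^{M} w_i = 1$ and, by Lemma~\ref{optimcover}, $\sum_{m}\varphi_{j_i m} \equiv 1$ on $\Gamma_{2^{-j_i}}\supset \Gamma_{2^{-j_M}}$, the coincidence~\eqref{coincidence} holds on the neighbourhood $\Gamma_{2^{-j_M}}$ of $\Gamma$; the support and derivative requirements~\eqref{support}, \eqref{derivatives} are immediate from Remark~\ref{partitions}.

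It then remains to feed this into the atomic estimate~\eqref{atomicuppest}. With $\lambda_{(j_i,m)}=w_i$ constant on each level and $\sharp{\cal Q}(j_i)\sim h_{j_i}^{-1}$ (Remark~\ref{gridcover}), the inner sum at level $j_i$ is $\sim w_i\,\tau_{j_i}2^{-j_i n/p}h_{j_i}^{-1/p} = w_i\sigma_{j_i}$, whence the quasi-norm in~\eqref{tends0} is
\[
\lesssim \Bigl(\sum_{i=1}^{M} (w_i\sigma_{j_i})^q\Bigr)^{1/q} = A_k^{-1}\Bigl(\sum_{i=1}^{M}\sigma_{j_i}^{q(1-q')}\Bigr)^{1/q} = A_k^{-1}\,A_k^{1/q} = A_k^{-1/q'},
\]
where I used $q(1-q') = -q'$, so that $\sum_i \sigma_{j_i}^{q(1-q')} = \sum_i \sigma_{j_i}^{-q'} = A_k$, and $1-\tfrac1q=\tfrac1{q'}$. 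As $A_k\to\infty$ the right-hand side tends to $0$, so~\eqref{tends0} holds and Discussion~\ref{prep-dense} yields the density of $\mathcal{D}(\rn\setminus\Gamma)$ in $\Btau(\rn)$, whence the trace cannot exist. The weights $w_i\propto\sigma_{j_i}^{-q'}$ are exactly those minimising $\sum_i(w_i\sigma_{j_i})^q$ under the constraint $\sum_i w_i = 1$ forced by~\eqref{coincidence}, which is why the dual exponent $q'$ --- and hence the sharp hypothesis $\bm{\sigma}^{-1}\notin\ell_{q'}$ --- appears naturally.

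The computation itself is short, and the two points requiring care are the ones already settled in the earlier propositions, so I expect no genuinely new obstacle. First, when moment conditions are needed one replaces the atoms $a_{(j_i,m)}$ by the corrected atoms $\tilde a_{(j_i,m)}$ of Discussion~\ref{prep-dense-2}, agreeing with them on a slightly smaller neighbourhood of $\Gamma$; this is exactly where the porosity of $\Gamma$ is used. Second, one must check that the several full covers, living at distinct scales $2^{-j_i}$ with overlapping supports, still combine to give~\eqref{coincidence} on the common smallest neighbourhood $\Gamma_{2^{-j_M}}$ --- which they do, since each level furnishes a genuine partition of unity there and the $w_i$ sum to one. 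The only conceptually new ingredient relative to Propositions~\ref{dicholemma} and~\ref{dicholemma2} is the H\"older-optimal weighting, and the crux is verifying that it yields precisely the decay factor $A_k^{-1/q'}$.
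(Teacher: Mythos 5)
Your proposal is correct, and it runs on the same engine as the paper's proof: Discussions~\ref{prep-dense} and~\ref{prep-dense-2}, optimal covers of $\Gamma_{2^{-j}}$ at several levels, full partitions of unity at each level weighted proportionally to $\sigma_j^{-q'}$, and the atomic estimate \eqref{atomicuppest} combined with the identity $q(1-q')=-q'$. The only real difference is the bookkeeping of the normalisation. The paper first extracts, from the divergence of $\sum_j \sigma_j^{-q'}$, consecutive blocks $\{j_\nu,\dots,j_{\nu+1}-1\}$ with $\sum_{l=j_\nu}^{j_{\nu+1}-1}\sigma_l^{-q'}\geq 1$ (its \eqref{dense-7}), assigns coefficients $\lambda_{(j,m)}=\frac{\sigma_j^{-q'}}{k}\bigl(\sum_{l=j_\nu}^{j_{\nu+1}-1}\sigma_l^{-q'}\bigr)^{-1}$ normalised per block and averaged over $k$ blocks (its \eqref{dense-8}), and ends with the decay $k^{-1/q'}$; you instead pick any finite family of levels whose partial sum $A_k$ of $\sigma_{j_i}^{-q'}$ is large, use the single global normalisation $w_i=\sigma_{j_i}^{-q'}/A_k$, and get the decay $A_k^{-1/q'}$ directly. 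Your version is a genuine (if mild) streamlining: it dispenses with the block-extraction step entirely, and your closing observation that the weights are H\"older-optimal explains transparently why $\ell_{q'}$ is the right hypothesis — something the paper's two-tier scheme obscures. Both arguments use porosity in exactly the same place (passing to moment-corrected atoms), both verify \eqref{coincidence} by the same ``weights summing to one times level-wise partitions of unity'' mechanism on the smallest common neighbourhood, and both need $q>1$ only so that $q'<\infty$ makes the final exponent $-1/q'$ strictly negative.
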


\begin{proof}
Denote $\bm{\sigma} := \bm{\tau} \bm{h}^{-1/p} \paren{n}^{-1/p}$. From \eqref{dense-2} it follows the existence of some strictly increasing
sequence $(j_k)_{k\in\N} \subset \N$ such that
\begin{equation}
\sum_{l=j_k}^{j_{k+1}-1} \ \sigma_{l}^{-q'}  \geq \ 1\ , \quad k\in\mathbb{N}.
\label{dense-7}
\end{equation}
For each $k\in \N$, consider optimal covers of $\Gamma_{2^{-i}}$, in the sense of Remark \ref{optimalcover}, for all $i=j_\nu,j_{\nu}+1,\ldots,j_{\nu+1}-1$ and all $\nu=1,\ldots,k$, and  
follow our Discussions \ref{prep-dense} and \ref{prep-dense-2} with $I_k$ from \eqref{Ik} defined such that $J_k = \{ j\in \N : \, \exists \, \nu = 1,\ldots,k : j \in \{ j_\nu,j_{\nu}+1,\ldots,j_{\nu+1}-1 \} \}$ and $M_j = \{ m \in \Zn : \,  Q_{jm} \in {\cal Q}(j) \}$,
$\varphi_{(j,m)} = \varphi_{jm}$ and 
\begin{equation}
\lambda_{(j,m)} = 
\ds\frac{\sigma_j^{-q'} }{k} \left(\sum_{l=j_\nu}^{j_{\nu+1}-1}
\sigma_l^{-q'}\right)^{-1}, \quad j= j_\nu , \dots, j_{\nu+1}-1, \ \nu = 1, \dots,
k. 
\label{dense-8}
\end{equation}
Due to Remark \ref{partitions}, this fits nicely into Discussions \ref{prep-dense} and \ref{prep-dense-2}. In particular, \eqref{support} and \eqref{derivatives} immediately hold. As to \eqref{coincidence}, we have, for $x\in \Gamma_{2^{-j_{k+1}}}$, that
\begin{align*}
 \lefteqn{\sum_{j=j_1}^{j_{k+1}-1} \sum_{Q_{jm} \in {\cal Q}(j)}\lambda_{(j,m)} \varphi_{(j,m)}(x) \varphi(x)} \\
= \ & \frac1k \ \sum_{\nu=1}^k \sum_{j=j_\nu}^{j_{\nu+1}-1}  \sigma_j^{-q'} \left(\sum_{l=j_
\nu}^{j_{\nu+1}-1}
\sigma_l^{-q'}\right)^{-1} \sum_{Q_{jm} \in {\cal Q}(j)} \varphi_{jm}(x) \varphi(x) \\
= \ & \frac{\varphi(x)}{k} \ \sum_{\nu=1}^k \sum_{j=j_\nu}^{j_{\nu+1}-1}  \sigma_j^{-q'} \left(\sum_{l=j_
\nu}^{j_{\nu+1}-1}
\sigma_l^{-q'}\right)^{-1}  \ = \ \varphi(x).
\end{align*}

It then follows, specially from \eqref{atomicuppest}, that the quasi-norm of the sum in \eqref{tends0} (or of an alternative similar sum, as discussed in Discussion \ref{prep-dense-2} apropos of the consideration of moment conditions) is
\begin{align*}
 \lesssim \ & \left(\sum_{j=j_1}^{j_{k+1}-1}  \left( \sum_{Q_{jm} \in {\cal Q}(j)}
|\lambda_{(j,m)} \tau_j 2^{-j\frac{n}{p}}|^p\right)^{\frac{q}{p}}\right)^{\frac1q} \\ 
 = \ & \left(\sum_{j=j_1}^{j_{k+1}-1} \lambda_{(j,m)}^q   \tau_j^q 2^{-j\frac{n}{p}q} (\sharp {\cal Q}(j))^{\frac{q}{p}} \right)^{\frac1q} \\
 \sim \ & \left(\sum_{\nu=1}^{k}\ \sum_{j=j_\nu }^{j_{\nu+1}-1}
\frac{\sigma_j^{-q'q}}{k} \left(\sum_{l=j_\nu}^{j_{\nu+1}-1}
\sigma_l^{-q'}\right)^{-q} \sigma_j^q   \right)^{\frac1q} \\
 = \ & \ \frac{1}{k} \left(\sum_{\nu=1}^{k} \left(\sum_{l=j_\nu}^{j_{\nu+1}-1}
\sigma_l^{-q'}\right)^{1-q}\right)^{\frac1q} \\
\leq \ & \ \frac{1}{k} \left( \sum_{\nu=1}^{k}
1\right)^{\frac1q}\quad = \  k^{-\frac {1}{q'}}
\ \xrightarrow[k\to\infty]{} \ 0, 
\end{align*}
since $q>1$, where we have also used Remark \ref{gridcover}, \eqref{dense-8} and \eqref{dense-7}. Thus \eqref{tends0} holds (possibly with an alternative similar sum if moment conditions are required, as mentioned above) and Discussion \ref{prep-dense} then concludes the proof.
\end{proof}

\begin{remark}
An observation corresponding to the one made in Remark \ref{noporosity} holds also here.
\end{remark}

\begin{theorem}
\label{coroleither}
Let $\Gamma$ be an $h$-set satisfying the porosity condition, $\bm{\sigma}$ an admissible sequence and let either $1\leq p<\infty$,
$0<q<\infty$, or $0<q\leq p<1$. Then {\bfseries either}
\begin{align*}
\text{\upshape\bfseries (i)} &\quad \Bsg = \tr{\Gamma} \Bsih\quad \mbox{exists}
\intertext{\bfseries or}
\text{\upshape\bfseries (ii)} &\quad  \mathcal{D}(\rn\setminus \Gamma)\quad \mbox{is dense in}\quad
\Bsih\\
& \quad \mbox{and, therefore,}\quad \tr{\Gamma} \Bsih\ \mbox{cannot exist}.
\end{align*}
\end{theorem}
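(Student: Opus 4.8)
The plan is to reduce the dichotomy to a single case distinction, namely whether or not $\bm{\sigma}^{-1} \in \ell_{q'}$, because in the parameter range of the theorem this is exactly the condition under which the trace space is declared. Indeed, each of the two admitted regimes --- $1\leq p<\infty$, $0<q<\infty$ and $0<q\leq p<1$ --- falls under the case ``$p\geq 1$ or $q\leq p<1$'' of Definition~\ref{defi-Bsg}, for which the defining requirement is condition~\eqref{ext-2-4-a}, that is $\bm{\sigma}^{-1}\in\ell_{q'}$. Hence, if $\bm{\sigma}^{-1}\in\ell_{q'}$, then Definition~\ref{defi-Bsg} applies verbatim and alternative~(i) holds: $\Bsg = \tr{\Gamma}\Bsih$ exists.

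It remains to show that $\bm{\sigma}^{-1}\notin\ell_{q'}$ forces alternative~(ii). I would set $\bm{\tau} := \bm{\sigma}\,\bm{h}^{1/p}\paren{n}^{1/p}$, so that $\Bsih = \Btau(\rn)$; this $\bm{\tau}$ is admissible, being a product of admissible sequences (note that the doubling property of $h$ together with its monotonicity makes $\bm{h}$, and hence $\bm{h}^{1/p}$, admissible in the sense of Definition~\ref{defi-sigma-adm}). The decisive observation is the identity $\bm{\tau}^{-1}\bm{h}^{1/p}\paren{n}^{1/p} = \bm{\sigma}^{-1}$, which lets me feed the failure of $\bm{\sigma}^{-1}\in\ell_{q'}$ directly into the density results already established.

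Now I would split on the size of $q$. If $0<q\leq 1$, then $q'=\infty$ and $\ell_{q'}=\ell_\infty$, so the assumption reads $\bm{\sigma}^{-1}\notin\ell_\infty$, i.e.\ $\bm{\tau}^{-1}\bm{h}^{1/p}\paren{n}^{1/p}\notin\ell_\infty$; this is precisely hypothesis~\eqref{dense-10} of Proposition~\ref{dicholemma}, whose remaining assumptions (porosity of $\Gamma$, $0<p,q<\infty$, $\bm{\tau}$ admissible) all hold, so $\mathcal{D}(\rn\setminus\Gamma)$ is dense in $\Btau(\rn)$ and the trace cannot exist. If instead $q>1$, then the second regime $0<q\leq p<1$ is impossible (it would force $p>1$), so we are necessarily in the first regime with $p\geq 1$ and $1<q<\infty$; the assumption $\bm{\sigma}^{-1}\notin\ell_{q'}$ then matches hypothesis~\eqref{dense-2} of Proposition~\ref{theo-density}, which again delivers density of $\mathcal{D}(\rn\setminus\Gamma)$ and non-existence of the trace. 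In either subcase alternative~(ii) holds, so the two alternatives together exhaust all possibilities.

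I expect no genuine analytic obstacle here: the substantial work --- building the approximating finite sums from optimal covers and, when moment conditions are needed, correcting the atoms using the porosity of $\Gamma$ --- has already been discharged in Propositions~\ref{dicholemma} and~\ref{theo-density} via Discussions~\ref{prep-dense} and~\ref{prep-dense-2}. The only point demanding care is the bookkeeping of exponents, recognising that $q=1$ is the threshold at which $\ell_{q'}$ collapses to $\ell_\infty$ and that $q>1$ rules out the regime $p<1$; coupled with the fact that in this range the existence of the trace hinges solely on $\bm{\sigma}^{-1}\in\ell_{q'}$. It is worth remarking that Proposition~\ref{dicholemma2} is \emph{not} invoked for this theorem, precisely because the excluded regime $0<p<1<q$ --- where Definition~\ref{defi-Bsg} would instead rely on condition~\eqref{ext-2-4-b} --- lies outside the stated parameter range.
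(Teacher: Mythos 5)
Your proposal is correct and follows essentially the same route as the paper's own proof: the case split on $\bm{\sigma}^{-1}\in\ell_{q'}$ (condition \eqref{ext-2-4-a}), with Definition~\ref{defi-Bsg} giving alternative (i) when it holds, and the substitution $\bm{\tau} := \bm{\sigma}\,\bm{h}^{1/p}\paren{n}^{1/p}$ feeding its failure into Proposition~\ref{dicholemma} for $0<q\leq 1$ and Proposition~\ref{theo-density} for $1<q<\infty$ to get alternative (ii). Your additional bookkeeping (admissibility of $\bm{\tau}$, the identity $\bm{\tau}^{-1}\bm{h}^{1/p}\paren{n}^{1/p}=\bm{\sigma}^{-1}$, and the observation that $q>1$ excludes the regime $p<1$) only makes explicit what the paper leaves implicit.
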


\begin{proof}
Either \eqref{ext-2-4-a} holds or not. If it holds, then from Definition \ref{defi-Bsg} it follows that (i) above holds. If \eqref{ext-2-4-a} fails, then, with $\bm{\tau} := \bm{\sigma} \bm{h}^{1/p} \paren{n}^{1/p}$,
$$\bm{\tau}^{-1} \bm{h}^{1/p} \paren{n}^{1/p} \notin \ell_{q'},$$
therefore, from Propositions \ref{dicholemma} (in the case $0<q \leq 1$) and \ref{theo-density} (in the case $1<q<\infty$), (ii) above holds.
\end{proof}

\begin{remark}
The only use of the porosity condition in the proof above is in guaranteeing the existence of suitable atoms when moment conditions for these are required. Therefore the result of the theorem above holds without assuming porosity of $\Gamma$ whenever the atomic representation of $\Bsih$ does not require atoms with moment conditions.
\end{remark}

\begin{remark}
\label{remeither}
From the proof above it also follows that, under the conditions of the theorem, (i) is equivalent to \eqref{ext-2-4-a}, that is, $\bm{\sigma}^{-1} \in \ell_{q'}.$
\end{remark}

\begin{conjecture}\label{conj}
Let $\Gamma$ be an $h$-set satisfying the porosity condition, $\bm{\sigma}$ an admissible sequence and $0<p<1$, $0<p<q<\infty$. Define $v_p$ by the identity $\frac{1}{v_p} = \frac{1}{p}-\frac{1}{q}$. With the extra assumption 
\begin{equation}
\lim_{j \to \infty} h_j \sigma_j^{v_p} = 0
\label{extracond}
\end{equation}
the alternative in the conclusion of the theorem above also holds. More precisely, assertion {\upshape\bfseries (i)} holds iff assertion {\upshape\bfseries (ii)} does not hold iff 
\begin{equation}
\bm{\sigma}^{-1} \in \ell_{v_p}.
\label{condconj}
\end{equation}
\end{conjecture}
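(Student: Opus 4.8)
The plan is to split the claimed chain of equivalences into an \emph{existence} half and a \emph{density} half, joined by the observation (already exploited in Discussion~\ref{prep-dense}) that density of $\DG$ in $\Bsih$ precludes the trace. Writing $\bm{\tau} := \bm{\sigma}\bm{h}^{1/p}\paren{n}^{1/p}$, so that $\bm{\tau}^{-1}\bm{h}^{1/p}\paren{n}^{1/p} = \bm{\sigma}^{-1}$ and $\tau_j 2^{-jn/p} = \sigma_j h_j^{1/p}$, I would prove: (a) under \eqref{extracond}, condition \eqref{ext-2-4-b} (existence for \emph{some} admissible $r$) is equivalent to $\bm{\sigma}^{-1}\in\ell_{v_p}$, which by Definition~\ref{defi-Bsg} yields assertion {\upshape\bfseries (i)}; and (b) if $\bm{\sigma}^{-1}\notin\ell_{v_p}$ then $\DG$ is dense in $\Bsih$, which yields assertion {\upshape\bfseries (ii)}. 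Since these two cases are exhaustive and, as density rules out a trace, mutually exclusive, the stated alternative together with the characterisation \eqref{condconj} follows at once.

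For (a), the choice $r=p$ in \eqref{ext-2-4-b} reads exactly $\bm{\sigma}^{-1}\in\ell_{v_p}$, so that implication is immediate. For the converse I would show that, under \eqref{extracond}, the index $r=p$ is the least restrictive one. Fix $r\in(p,\min\{q,1\}]$ and set $c_r := (\tfrac1p-\tfrac1r)v_r>0$; a short computation from $\frac{1}{v_p}-\frac{1}{v_r}=\frac1p-\frac1r$ gives the exact identity $v_r-v_p = v_p(\tfrac1p-\tfrac1r)v_r = v_p c_r$, whence
\[
\frac{\sigma_j^{-v_p}}{\sigma_j^{-v_r}\,h_j^{(\frac1r-\frac1p)v_r}}
 = \sigma_j^{\,v_r-v_p}\,h_j^{c_r}
 = \bigl(h_j\,\sigma_j^{v_p}\bigr)^{c_r}\xrightarrow[j\to\infty]{}0
\]
by \eqref{extracond}. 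Thus $\sigma_j^{-v_p}\leq\sigma_j^{-v_r}h_j^{(\frac1r-\frac1p)v_r}$ for large $j$, so convergence of the latter series forces $\bm{\sigma}^{-1}\in\ell_{v_p}$. This is precisely where \eqref{extracond} is indispensable: it is exactly the statement that makes the displayed ratio tend to $0$.

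For (b) I would prove a density proposition in the spirit of Propositions~\ref{dicholemma2} and~\ref{theo-density}, following Discussions~\ref{prep-dense} and~\ref{prep-dense-2}, but now distributing the mass across scales with weights tuned to $v_p$. Given $\bm{\sigma}^{-1}\notin\ell_{v_p}$, for $S\to\infty$ I select a finite block of levels $j\in\{j_1,\dots,j_2\}$ with $\sum_{j=j_1}^{j_2}\sigma_j^{-v_p}\geq S$ and set $\theta_j := \sigma_j^{-v_p}/S$. Starting from an optimal cover of $\Gamma_{2^{-j_1}}$ (Remark~\ref{optimalcover}), I partition its $\sim h_{j_1}^{-1}$ cubes into disjoint groups $G_{j_1},\dots,G_{j_2}$ with $\sharp G_j\sim\theta_j h_{j_1}^{-1}$, and refine each cube of $G_j$ down to an optimal cover of $\Gamma$ at level $j$; a refined group then consists of $\sim\theta_j h_{j_1}^{-1}\cdot(h_{j_1}/h_j)=\theta_j h_j^{-1}$ grid cubes of step $2^{-j}$. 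Taking all $\lambda_{(j,m)}=1$ and $\varphi_{(j,m)}=\varphi_{jm}$ from the corresponding partitions of unity, disjointness of the groups yields \eqref{coincidence} on a neighbourhood of $\Gamma$, while \eqref{support} and \eqref{derivatives} are immediate. The estimate \eqref{atomicuppest} then gives, exactly via $\theta_j^{q/p}\sigma_j^q = S^{-q/p}\sigma_j^{-v_p}$,
\[
\Bigl\|\textstyle\sum\lambda_r\varphi_r\varphi\,\big|\,\Btau(\rn)\Bigr\|
\;\lesssim\;\Bigl(\sum_{j=j_1}^{j_2}\bigl(\theta_j h_j^{-1}\,\sigma_j^p h_j\bigr)^{q/p}\Bigr)^{1/q}
=\Bigl(\sum_{j=j_1}^{j_2}\theta_j^{q/p}\sigma_j^{q}\Bigr)^{1/q}
= S^{-1/v_p}\xrightarrow[S\to\infty]{}0 .
\]
As in the earlier propositions, moment conditions (if required) are installed by the porosity-based procedure described before Proposition~\ref{dicholemma}, altering the atoms only away from $\Gamma$; Discussion~\ref{prep-dense} then concludes density of $\DG$.

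The main obstacle is the combinatorial realisation in (b): turning the fractional weights $\theta_j\propto\sigma_j^{-v_p}$ into an honest disjoint tiling of a neighbourhood of $\Gamma$ by grid cubes of several different steps, with the integer roundings of $\sharp G_j$ controlled so that the groups still exhaust the coarse cover and \eqref{coincidence} holds exactly. This is the analogue of the delicate counting in Proposition~\ref{dicholemma2} (the bookkeeping around \eqref{harmon}--\eqref{k1}); here assumption \eqref{extracond} re-enters usefully, since $h_j\sigma_j^{v_p}\to 0$ guarantees $\theta_j h_j^{-1}=\sigma_j^{-v_p}/(S h_j)\gtrsim 1$ for the levels in play once $j_1$ is chosen large (depending on $S$), i.e. every selected scale really contributes at least one cube, so no level is lost in the rounding.
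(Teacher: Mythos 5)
Before anything else: the statement you were given is, in the paper, a \emph{conjecture}. The authors do not prove it; what they establish is exactly the two fragments you also reproduce, namely that \eqref{condconj} implies assertion (i) via Definition~\ref{defi-Bsg} (case $r=p$ of \eqref{ext-2-4-b}), and that assertion (ii) holds under the \emph{stronger} hypothesis $\limsup \bm{\sigma}^{-1}>0$ (Proposition~\ref{dicholemma2}). Your part (a) is correct and is essentially the authors' own consistency remark following the conjecture: your identity $v_r-v_p=v_p(\tfrac1p-\tfrac1r)v_r$ gives a clean comparison of the series in \eqref{ext-2-4-b} with $\sum_j\sigma_j^{-v_p}$; you only omit the endpoint $r=q\le 1$, where $v_r=\infty$, which must be excluded separately by noting that $\bm{\sigma}^{-1}\bm{h}^{\frac1q-\frac1p}\in\ell_\infty$ contradicts \eqref{extracond}. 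But, by your own (correct) logical reduction, part (a) is not where the conjecture lives: everything hinges on part (b), the density of $\DG$ in $\Bsih$ whenever $\bm{\sigma}^{-1}\notin\ell_{v_p}$, and this is precisely the open point.

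\textbf{First gap in (b).} With $\lambda_{(j,m)}=1$ and $\varphi_{(j,m)}=\varphi_{jm}$ taken directly from the fine partitions of unity, ``disjointness of the groups'' does \emph{not} yield \eqref{coincidence}: near the interface between a coarse cube of $G_j$ and one of $G_{j'}$ with $j\neq j'$, functions of both fine partitions are nonzero and neither family sums to $1$ there, so your sum does not equal $\varphi$ on any neighbourhood of $\Gamma$. This is repairable by the product device of Proposition~\ref{dicholemma2} (use $\varphi_{jm}\psi_{G_j}$, where $\psi_{G_j}$ is the sum of the coarse bumps of the group), together with the counting argument given there; you cite that proposition as an analogy but do not implement its mechanism, and without it \eqref{coincidence} fails.

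\textbf{The essential gap in (b).} Your groups consist of \emph{coarse} cubes at scale $j_1$, of which there are only $\sim h_{j_1}^{-1}$; hence $G_j\neq\emptyset$ forces $\theta_j h_{j_1}^{-1}\gtrsim 1$, i.e. $\sigma_j^{-v_p}\gtrsim S\,h_{j_1}$ for \emph{every} $j$ in the block. What you verify instead is $\theta_j h_j^{-1}\gtrsim 1$, and that is indeed all \eqref{extracond} can give, since \eqref{extracond} compares $\sigma_j^{-v_p}$ with $h_j$, not with the much larger $h_{j_1}$. The two requirements of your scheme, mass $\sum_{j=j_1}^{j_2}\sigma_j^{-v_p}\ge S$ and $\min_{j_1\le j\le j_2}\sigma_j^{-v_p}\ge S\,h_{j_1}$, can be incompatible under the full hypotheses of the conjecture. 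Sketch of an obstruction: let $a_j:=\sigma_j^{-v_p}$ be non-increasing, equal to $c_k:=2^{-k^2}$ on a plateau $P_k$ of length $2^{k^2}$ (unit mass per plateau), consecutive plateaus joined by geometric drops of factor $\tfrac12$ per step; then $\bm{\sigma}$ is admissible, $\bm{\sigma}^{-1}\notin\ell_{v_p}$, and $\sigma_j^{-1}\to 0$, so neither Proposition~\ref{dicholemma} nor Proposition~\ref{dicholemma2} applies. Now choose $h$ non-increasing, with per-step decay factors in $[2^{-5/6},1]$ (so $h$ is a measure function of a porous $h$-set by \eqref{main2a} and Proposition~\ref{crit-poros}), such that $h=c_{k+1}$ at the start of $P_k$, $h=c_{k+1}\bigl(c_{k+2}/c_{k+1}\bigr)^{1/2}$ at the end of $P_k$, and $h=c_{k+2}$ at the start of $P_{k+1}$. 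One checks that $h_j/a_j\to 0$, i.e. \eqref{extracond} holds; yet for every starting index $j_1$ the scales $j\ge j_1$ with $a_j\ge S h_{j_1}$ carry total mass at most $2+o(1)$ (only the plateau containing $j_1$ and the next one can qualify, never the second-next). So for all $S\ge 3$ no admissible block exists, every further scale would produce an empty group, and your bound $S^{-1/v_p}$ is never realised with $S\to\infty$: the construction stalls at a constant. This tension is exactly why Proposition~\ref{dicholemma2} needs $\limsup\bm{\sigma}^{-1}>0$: there the group weights $i^{-1}$ in \eqref{harmon}--\eqref{k1} are decoupled from the refinement scales $j(i)$, which are chosen afterwards, arbitrarily deep, along the subsequence \eqref{boundedaway}; in your scheme the weight $\theta_j$ of a group is tied to the very scale $j$ at which it is refined. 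Removing this coupling (or finding a different construction) is the actual content of the open conjecture, and your proposal does not do it.
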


We discuss a little bit this conjecture. 
Clearly, if \eqref{condconj} holds then Definition \ref{defi-Bsg} guarantees that (i) holds, and therefore (ii) does not, even without the extra assumption \eqref{extracond}.

On the other hand, Proposition \ref{dicholemma2}, with $\bm{\tau} := \bm{\sigma} \bm{h}^{1/p} \paren{n}^{1/p}$, guarantees that at least in the special subcase of $\bm{\sigma}^{-1} \notin \ell_{v_p}$ given by $\limsup \bm{\sigma}^{-1} > 0$ (ii) holds, and therefore (i) does not.

In other words, we can get the alternative in the conclusion of the theorem above in the case  $0<p<1$, $0<p<q<\infty$ if instead of the extra assumption \eqref{extracond} we assume that $\limsup \bm{\sigma}^{-1} > 0$. The drawback of this restriction is that it immediately implies that \eqref{condconj} never holds, so \eqref{condconj} is not a real alternative under that extra assumption. From this point of view, \eqref{extracond} is more interesting. Notice also that, in order that this conjecture does not contradict Definition \ref{defi-Bsg}, under \eqref{extracond} it must be true that \eqref{condconj} holds whenever any of the conditions \eqref{ext-2-4-b} does. In fact, by standard comparison criteria for series it is easy to see directly that this is indeed the case (take also into account that the case $v_r=\infty$ in the conditions \eqref{ext-2-4-b} never holds in presence of \eqref{extracond}).

\begin{remark}
It is worth noticing that in the case when $\Gamma$ is a $d$-set with $0<d<n$, \eqref{extracond} holds whenever \eqref{condconj} fails, so in that setting, and by the above discussion, there is no need to impose the extra condition \eqref{extracond}, the conjecture then turning out to be indeed a known result (cf. \cite[(1.5)]{T-dicho}). However, for general $h$-sets we cannot dispense with an extra assumption (such as (\ref{extracond})), as the Conjecture above then fails, at least in the part of the equivalence with (\ref{condconj}). We give a class of examples where this is the case:

Consider $h(r)=(1+|\log r|)^b, \, r\in (0,1]$, for any given $b \in (-1,0)$, recall \eqref{h-log-ex} and the discussion afterwards. Let $0<p<1$ and $0<p<q<\infty$. Given any $\kappa \in (-b(\frac{1}{p} - \frac{1}{q}) + (1+b) \frac{1}{q'}, \frac{1}{p} - \frac{1}{q}]$, consider $\bm{\sigma}=((1+j)^\kappa)_j$. It is easy to see that, though (\ref{condconj}) fails, by Definition \ref{defi-Bsg}(ii) with $r=\min\{q,1\}$ (i) of the theorem above holds, i.e., the trace exists (and therefore the corresponding assertion (ii) fails, as was discussed in the beginning of Section \ref{sect-2-2}).

Nevertheless, our conjecture as stated above resists this class of counterexamples: as is easily seen, for such a class the assumption (\ref{extracond}) is violated.
\end{remark}

\subsection{Dichotomy results}
We combine our results from the previous subsections and deal with the so-called {\em dichotomy} of trace spaces. First we briefly describe the idea.

Recall that $\DRn$ is dense in all spaces $\Btau(\rn)$ with $0<p,q<\infty$. So removing from $\rn$ only `small enough' $\Gamma$ one can ask whether (still) 
\begin{equation}
\text{$\DG$ is dense in $\Btau(\rn)$.}
\label{dgamma-dense}
\end{equation} 
Conversely, we have the affirmative trace results mentioned in Section~\ref{sect-2-1}, but one can also ask for what (`thick enough') $\Gamma$ 
\begin{equation}
\text{there exists a trace of $\Btau(\rn)$ in  $L_p(\Gamma)$}
\label{trace-exist}
\end{equation}
(for sufficiently high smoothness and $q$-regularity). Though these questions may arise independently, it is at least clear that whenever $\DG$ is dense in $\Btau(\rn)$, then there cannot exists a trace according to \eqref{trace-def}; see our discussion at the beginning of Section~\ref{sect-2-2} and \cite{T-dicho} for the corresponding argument in the classical case.

\begin{remark}
It is not always true that one really has an alternative in the sense that {\em either} there is a trace {\em or} $\DG$ is dense in $\Btau(\rn)$. Triebel studied such questions in \cite{T-dicho} for spaces of type $\B$ and described an example of a set $\Gamma$ where a gap remains: traces can only exist for spaces $\B$ with smoothness $s\geq s_0$, whereas density requires $s\leq s_1$ and $s_1< s_0$. 
\end{remark}

However, if one obtains an alternative between \eqref{dgamma-dense} and \eqref{trace-exist}, then following Triebel in \cite{T-dicho} we call this phenomenon {\em dichotomy}. First we recall this notion for spaces of type $\B(\rn)$ and point out necessary modifications for our setting afterwards. Let 
\begin{equation}
\tr{\Gamma} : \B(\rn) \rightarrow L_p(\Gamma)
\end{equation}
be the trace operator defined by completion from the pointwise trace according to \eqref{trace-def}, and 
\begin{equation}
B_p(\rn) = \{\B(\rn): 0<q<\infty,  s\in\real\},\quad 0<p<\infty.
\label{dicho-1}
\end{equation}
\begin{definition}\label{defi-dicho}
Let $n\in \nat$, $\Gamma\subset\rn$, $0<p<\infty$. 
The {\em dichotomy} of the scale $B_p(\rn)$ with respect to $L_p(\Gamma)$, denoted by $\dicho(B_p(\rn),L_p(\Gamma))$, is defined by
\begin{align}
& \dicho(B_p(\rn), L_p(\Gamma)) = (\dc{s}, \dc{q}),\quad \dc{s}\in\real,\ 0<\dc{q}<\infty, 
\label{dicho-2}
\intertext{if}
\text{\upshape\bfseries (i)}& \ \tr{\Gamma} B^s_{p,q}(\Rn) \ \text{exists for}\ \begin{cases}
s>\dc{s}, & 0<q<\infty,\\ s= \dc{s},& 0<q\leq \dc{q},
\end{cases}\label{dicho-3a} 
\intertext{and}
\text{\upshape\bfseries (ii)}& \ \mathcal{D}(\rn\setminus \Gamma) \ \text{is dense
  in $\B(\rn)$ for}\ \begin{cases}
s=\dc{s}, & \dc{q} <q<\infty,\\ s< \dc{s},& 0<q<\infty.
\end{cases}
\label{dicho-3b}
\end{align}
\end{definition}

\begin{remark} \label{R7.67}
The notion applies to spaces of type $\F$ in a similar way, cf. \cite{T-dicho}. Then one has to define the  borderline cases $\dc{q} =0$ and $\dc{q} =\infty$, too. But this will not be needed at the moment in our setting. \\
We briefly explain why it is reasonable to look for the `breaking point' $(\dc{s}, \dc{q})$. In the diagram below we sketch this situation, where spaces of type $\B(\rn)$ are indicated by their parameters $(\frac1q,s)$ (while $p$ is always assumed to be fixed). Assume that $\DG$ is dense in some space $B^{s_2}_{p,q_2}(\rn)$.\\ 
\begin{minipage}{0.48\textwidth}

  Then, since $\DRn$ is dense in all spaces $B^t_{p,u}(\rn)$, we immediately obtain that $\DG$ is also dense in all spaces $B^t_{p,u}(\rn)$ in which $B^{s_2}_{p,q_2}(\rn)$ is continuously embedded (the shaded area left and below of $(\frac{1}{q_2},s_2)$ referring to $B^{s_2}_{p,q_2}(\rn)$ in the diagram). This explains why we look for the largest possible $s_2$ and smallest possible $q_2$ in \eqref{dicho-3b}. Conversely, if the trace exists for some space  $B^{s_1}_{p,q_1}(\rn)$, then it exists likewise for \end{minipage}~~~~\begin{minipage}{0.47\textwidth}
\begin{picture}(0,0)%
\includegraphics{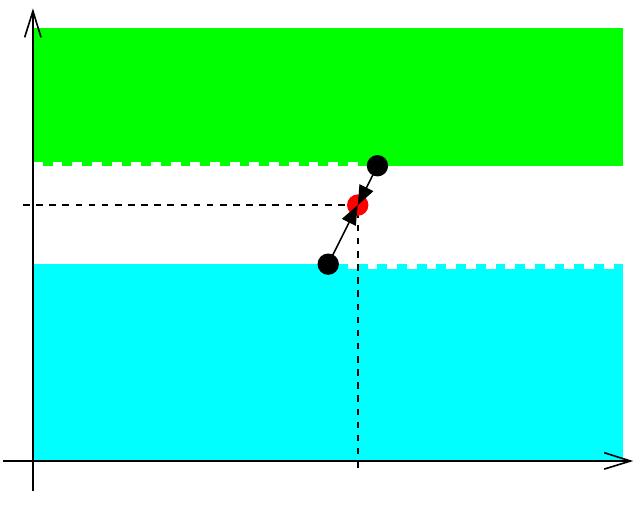}%
\end{picture}%
\setlength{\unitlength}{4144sp}%
\begingroup\makeatletter\ifx\SetFigFont\undefined%
\gdef\SetFigFont#1#2#3#4#5{%
  \reset@font\fontsize{#1}{#2pt}%
  \fontfamily{#3}\fontseries{#4}\fontshape{#5}%
  \selectfont}%
\fi\endgroup%
\begin{picture}(2907,2392)(751,-2150)
\put(1441,-151){\makebox(0,0)[lb]{\smash{{\SetFigFont{10}{12.0}{\familydefault}{\mddefault}{\updefault}{\color[rgb]{0,0,0}$\tr{\Gamma}$ exists}%
}}}}
\put(1036,-1681){\makebox(0,0)[lb]{\smash{{\SetFigFont{10}{12.0}{\familydefault}{\mddefault}{\updefault}{\color[rgb]{0,0,0}$\DG$ dense}%
}}}}
\put(2476,-421){\makebox(0,0)[lb]{\smash{{\SetFigFont{10}{12.0}{\familydefault}{\mddefault}{\updefault}{\color[rgb]{0,0,0}$B^{s_1}_{p,q_1}(\rn)$}%
}}}}
\put(2251,-1141){\makebox(0,0)[rb]{\smash{{\SetFigFont{10}{12.0}{\familydefault}{\mddefault}{\updefault}{\color[rgb]{0,0,0}$B^{s_2}_{p,q_2}(\rn)$}%
}}}}
\put(766,119){\makebox(0,0)[rb]{\smash{{\SetFigFont{10}{12.0}{\familydefault}{\mddefault}{\updefault}{\color[rgb]{0,0,0}$s$}%
}}}}
\put(3601,-2086){\makebox(0,0)[rb]{\smash{{\SetFigFont{10}{12.0}{\familydefault}{\mddefault}{\updefault}{\color[rgb]{0,0,0}$\frac1q$}%
}}}}
\put(2386,-2086){\makebox(0,0)[b]{\smash{{\SetFigFont{10}{12.0}{\familydefault}{\mddefault}{\updefault}{\color[rgb]{0,0,0}$\frac{1}{\dc{q}}$}%
}}}}
\put(811,-691){\makebox(0,0)[rb]{\smash{{\SetFigFont{10}{12.0}{\familydefault}{\mddefault}{\updefault}{\color[rgb]{0,0,0}$\dc{s}$}%
}}}}
\end{picture}%

\end{minipage}

\noindent 
 all spaces which embed continuously into $B^{s_1}_{p,q_1}(\rn)$ (the shaded area right and above of $(\frac{1}{q_1},s_1)$ referring to $B^{s_1}_{p,q_1}(\rn)$ in the diagram); hence we now search for the smallest possible $s_1$ and largest possible $q_1$ in \eqref{dicho-3a}.
Dichotomy in the above-defined sense happens, if the two `extremal' points merge, that is, the common breaking point $(\frac{1}{\dc{q}}, \dc{s})$ in the diagram exists. Then we denote the couple of parameters $(\dc{s}, \dc{q})$ by $\dicho(B_p(\rn),L_p(\Gamma))$.\\
In \cite[Sect.~6.4.3]{T-wave-dom} Triebel mentioned already, that it might be more reasonable in general to exclude the limiting case $q=\dc{q}$ in \eqref{dicho-3a} or shift it to \eqref{dicho-3b}. But as will turn out below, (also) in our context the breaking point $q=\dc{q}$ is always on the trace side.
\end{remark}

Now we collect what is known in the situation of spaces $\B$ for hyperplanes $\Gamma=\real^m$ or $d$-sets $\Gamma$, $0<d<n$.

\begin{proposition}\label{prop-dicho-Triebel}
Let $0<p<\infty$.\\[-4ex]
\bli
\item[{\upshape\bfseries (i)}]
Let $m\in\nat$, $m\leq n-1$. Then
\begin{align*}
\dicho(B_p(\rn), L_p(\real^m)) & = \left(\frac{n-m}{p},
\min\{p,1\}\right).
\end{align*}
\item[{\upshape\bfseries (ii)}]
Let $\Gamma$ be a compact $d$-set, $0<d<n$. Then
\begin{align*}
\dicho(B_p(\rn), L_p(\Gamma)) & = \left(\frac{n-d}{p},
\min\{p,1\}\right).
\end{align*}
\eli
\end{proposition}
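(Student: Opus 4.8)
The plan is to read off both the trace-existence threshold and the density threshold directly from the machinery of Section~\ref{sect-2-2}, specialised to $h(r)=r^d$, and to check that the two meet at the claimed breaking point. Throughout I would use that for a $d$-set with $0<d<n$ one has $\bm{h}=\paren{-d}$ and $\Gamma$ satisfies the porosity condition (Remark~\ref{example-ball}), so that everything built up in Section~\ref{sect-2-2} is available. The first step is to translate the classical scale into the generalised one: writing $B^t_{p,q}(\rn)=B^{\paren{t}}_{p,q}(\rn)$ and putting $\bm{\sigma}:=\paren{t}\,\bm{h}^{-1/p}\paren{n}^{-1/p}=\paren{u}$ with $u:=t-\frac{n-d}{p}$, exactly as in Remark~\ref{R-09-3}, one gets $B^t_{p,q}(\rn)=\Bsih$ and $\bm{\sigma}^{-1}=\paren{-u}$, whose entries are $2^{-ju}$. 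Hence the $\ell$-membership conditions of Section~\ref{sect-2-1} become elementary: $\bm{\sigma}^{-1}\in\ell_{q'}$ holds iff $u>0$, or $u=0$ and $q\le 1$ (so that $q'=\infty$); and, with $\frac{1}{v_p}=\frac1p-\frac1q$, one has $\bm{\sigma}^{-1}\in\ell_{v_p}$ iff $u>0$.

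Next I would assemble the alternative regime by regime. For $1\le p<\infty$ and all $0<q<\infty$, and for $0<q\le p<1$, Theorem~\ref{coroleither} yields the dichotomy and Remark~\ref{remeither} identifies the trace side with \eqref{ext-2-4-a}, i.e.\ $\bm{\sigma}^{-1}\in\ell_{q'}$; by the computation above this is exactly $\{u>0\}\cup\{u=0,\ q\le\min\{p,1\}\}$, its complement giving density. For the band $0<p<1$, $p<q<\infty$, which Theorem~\ref{coroleither} does not cover, I would obtain existence for $u>0$ from Definition~\ref{defi-Bsg}(ii) with $r=p$ (then $\bm{h}^{1/r-1/p}=\paren{0}$ and $\bm{\sigma}^{-1}\in\ell_{v_p}$ iff $u>0$), and density for $u\le 0$ from Proposition~\ref{dicholemma2}, whose hypothesis \eqref{dense-101}, namely $\limsup\bm{\sigma}^{-1}>0$, is equivalent to $u\le 0$.

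In every regime the trace side is then precisely $\{u>0\}\cup\{u=0,\ q\le\min\{p,1\}\}$ and the density side is its complement; recalling $u=t-\frac{n-d}{p}$, this is Definition~\ref{defi-dicho} with $\dc{s}=\frac{n-d}{p}$ and $\dc{q}=\min\{p,1\}$, which proves (ii). Note that the breaking point $(\dc{s},\dc{q})$ itself falls on the trace side, in agreement with the comment in Remark~\ref{R7.67}.

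Finally, for (i) the one genuinely new point is that $\real^m$ is not compact, hence not an $h$-set in our sense, so the results of Section~\ref{sect-2-2} do not apply verbatim. I would handle this by localisation: given $f\in B^t_{p,q}(\rn)$, first approximate it by $g\in\DRn$ (density of test functions), so that $\supp g$ meets only a \emph{bounded} piece of $\real^m$; on that compact piece $\real^m$ behaves as a compact $m$-set (the relevant measure being $m$-dimensional Hausdorff measure, and porosity holding since $m\le n-1<n$), whence the optimal covers of Lemma~\ref{optimcover} and the atomic estimate \eqref{atomicuppest} are available and the computation of (ii) goes through verbatim with $d$ replaced by $m$. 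Equivalently one may simply quote the classical trace and density statements recorded in \cite{T-dicho,T-Frac}. I expect the main obstacle to be exactly this reduction of the unbounded hyperplane to a compact piece while keeping the constants of Lemma~\ref{optimcover} uniform, together with verifying --- as was done above --- that in the excluded band $0<p<1$, $p<q$ the two one-sided results dovetail with no gap at $u=0$, since there $\limsup\bm{\sigma}^{-1}>0$ is equivalent to $u\le0$ while the sufficient existence condition requires $u>0$.
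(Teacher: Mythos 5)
The paper gives no proof of this proposition at all: it is a recollection of known results, and the remark that follows it simply cites \cite{T-dicho}, \cite[Cor.~6.69, Thm.~6.68]{T-wave-dom}, \cite{CS-4}, with forerunners in \cite{T-Frac,T-func}. Your treatment of part (ii) is therefore a genuinely different route, and it is correct: writing $B^t_{p,q}(\rn)=\Bsih$ with $\bm{\sigma}=\paren{u}$, $u=t-\frac{n-d}{p}$, your translation of the sequence-space conditions is exact ($\bm{\sigma}^{-1}\in\ell_{q'}$ iff $u>0$, or $u=0$ and $q\le 1$; $\bm{\sigma}^{-1}\in\ell_{v_p}$ iff $u>0$; $\limsup\bm{\sigma}^{-1}>0$ iff $u\le 0$), Theorem~\ref{coroleither} together with Remark~\ref{remeither} settles the regimes $1\le p<\infty$ and $0<q\le p<1$, and in the remaining band $0<p<1$, $p<q<\infty$ the two one-sided results (Definition~\ref{defi-Bsg}(ii) with $r=p$ for existence, Proposition~\ref{dicholemma2} for density) dovetail with no gap at $u=0$ precisely because for geometric sequences the conditions $\bm{\sigma}^{-1}\in\ell_{v_p}$ and $\limsup\bm{\sigma}^{-1}>0$ are complementary --- granted, as in the paper's own proof of Theorem~\ref{coroleither}, that Definition~\ref{defi-Bsg} encodes genuine trace existence via the results of \cite{bricchi-diss,Cae-env-h} behind it. In effect you have assembled into a proof of (ii) observations the paper scatters over Remark~\ref{dicho-new-d}, Corollary~\ref{Coro-2.28} and the discussion after Conjecture~\ref{conj}; what this buys is self-containedness, in place of the bare citation the paper offers.

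Part (i) is where your argument has a genuine gap. Since $\real^m$ is not compact, it is not an $h$-set in the sense of Definition~\ref{defi-hset}, and the Section~\ref{sect-2-2} machinery cannot be run ``verbatim'' on a compact piece. For the \emph{density} half your localisation does work, and more cleanly than you suggest: any approximant of $\varphi\in C_0^\infty(\rn)$ produced by Discussion~\ref{prep-dense} carries the factor $\varphi$ and is therefore supported in $\supp\varphi$, so vanishing near the compact piece $\real^m\cap\overline{B(0,2R)}$ already makes it an element of $\mathcal{D}(\rn\setminus\real^m)$; and no uniformity of the constants in Lemma~\ref{optimcover} with respect to $R$ is needed, because $R$ is fixed once $\varphi$ is (your stated ``main obstacle'' is thus not the real issue). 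The \emph{trace} half, however, does not localise this way: trace existence means the global inequality \eqref{trace-def} with target $L_p(\real^m)$ over the whole unbounded hyperplane, and assembling it from compact-piece estimates requires a uniformly locally finite partition of unity together with a localisation property of the $B^s_{p,q}(\rn)$-quasi-norm that you neither state nor prove. So for that half the work in your proposal is really done by the fallback --- quoting the classical results in \cite{T-dicho,T-Frac} --- which is exactly what the paper does for the entire proposition.
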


\begin{remark}
The result (i) is proved in this explicit form in \cite{T-dicho}, see also \cite[Cor.~6.69]{T-wave-dom} and \cite{CS-4}. The second part (ii) can be found in \cite{T-dicho} and \cite[Thm.~6.68]{T-wave-dom} (with forerunners in \cite[Thm.~17.6]{T-Frac} and \cite[Prop.~19.5]{T-func}).
As for the classical case of a bounded $C^\infty$ domain $\Omega$ in $\rn$ 
with boundary $\Gamma=\partial\Omega$ (referring to $d=n-1$), $1<p<\infty$, $1\leq
q<\infty$, $s\in\real$, the situation (whether $\mathcal{D}(\Omega)$ is dense
in $B^s_{p,q}(\Omega)$) is known for long, cf. \cite[Thm.~4.7.1]{T-I}, even in
a more general setting, we refer to \cite{CS-trace-dom}. Moreover, we dealt with dichotomy questions for weighted spaces corresponding to (ii) in \cite{iwona-2,Ha-EE}. More precisely, when $\Gamma$ is again a compact $d$-set, $0<d<n$, $0<p<\infty$, and the weight $\wGamma$ is given by 
\[
\wGamma(x)= \begin{cases} \dist (x,\Gamma)^{\varkappa} ,&
 \text{if}\quad  \dist(x,\Gamma) \leq 1,   \\ 1 , & \text{if}\quad \dist(x,\Gamma) \geq 1, \end{cases}
\]
with $\varkappa>-(n-d)$, then
\begin{equation}\label{Ha-EE-d}
\dicho(B_p(\rn, \wGamma), L_p(\Gamma))  = \left(\frac{n-d+\varkappa}{p},
\min\{p,1\}\right).
\end{equation}
In all the cases mentioned above there are parallel results for $F$-spaces, too.
\end{remark}

We return to our setting to study traces of $\Btau(\rn)$ on fractal $h$-sets. Obviously, definitions \eqref{dicho-1}--\eqref{dicho-3b} have to be adapted now. The
following extension seems appropriate. (Recall that we put $v=\infty$
if $\frac1v=0$.)  

\begin{definition}\label{defi-dicho-ex}
Let $\Gamma$ be a porous $h$-set, and 
\begin{equation}
B_p(\rn) = \{\Btau(\rn) : 0<q<\infty, \ \bm{\tau}\ \text{admissible}\},\quad 0<p<\infty.
\label{dicho-6}
\end{equation}
Then the {\em dichotomy} of the scale
$B_p(\rn)$ with respect to $L_p(\Gamma)$ is defined by
\begin{equation}
\mathbb{D}(B_p(\rn), L_p(\Gamma)) = (\dc{\bm{\tau}}, \dc{q}),
\label{dicho-7}
\end{equation}
if $\dc{\bm{\tau}}$ is admissible, $0<\dc{q} < \infty$, and 
\begin{align}
\text{\upshape\bfseries (i)}& \quad \tr{\Gamma} \Btau(\Rn) \ \text{exists for}\ 
\bm{\tau}^{-1} \dc{\bm{\tau}} \in \ell_v, \ \text{where}\ \frac1v =
\left(\frac{1}{\dc{q}}-\frac1q\right)_+ ,\label{dicho-8a}
\intertext{and}
\text{\upshape\bfseries (ii)}& \quad \mathcal{D}(\rn\setminus \Gamma) \ \text{is dense
  in $\Btau(\rn)$ for}\ \bm{\tau}^{-1} \dc{\bm{\tau}} \not\in \ell_v .
\label{dicho-8b}
\end{align}
\end{definition}

\begin{remark}\label{dicho-new-d}
One immediately verifies that \eqref{dicho-6}--\eqref{dicho-8b} with
$h(r)=r^d$, $\bm{\tau} = \paren{s}$ for $s\in\real$, 
and $0<p<\infty$, $0<q<\infty$, coincides with
\eqref{dicho-1}--\eqref{dicho-3b}, that is, the notion is extended (and we may thus and in this sense retain the same symbols in a slight abuse of notation). Using  the continuous embedding
\begin{equation}\label{BB-Rn}
B^{\bm{\sigma}}_{p,q_1}(\rn) \hookrightarrow
B^{\bm{\tau}}_{p,q_2}(\rn)
\end{equation}
for $\bm{\sigma}^{-1}\bm{\tau}\in \ell_{q^\ast}$ with $\frac{1}{q^\ast} = (\frac{1}{q_2} - \frac{1}{q_1})_+$, cf. \cite[Thm.~3.7]{C-Fa}, we can argue as in Remark~\ref{R7.67} to motivate the definition.
\end{remark}

Part of the results contained in Theorem~\ref{coroleither} and Remark \ref{remeither} can then be rephrased in terms of this notion of dichotomy in the following way.

\begin{corollary}\label{Coro-2.28}
Let $\Gamma$ be an $h$-set satisfying the porosity condition and $1\leq p<\infty$. Then
$$
\mathbb{D}(B_p(\rn), L_p(\Gamma)) = \left( \bm{h}^{1/p} \paren{n}^{1/p}, \ 1
\right).
$$
\end{corollary}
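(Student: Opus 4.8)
The plan is to recognise that this corollary is simply a translation of Theorem~\ref{coroleither} --- together with the equivalence recorded in Remark~\ref{remeither} --- into the language of Definition~\ref{defi-dicho-ex}. The bridge is the substitution $\bm{\tau} = \bm{\sigma}\,\bm{h}^{1/p}\paren{n}^{1/p}$, equivalently $\bm{\sigma} = \bm{\tau}\,\bm{h}^{-1/p}\paren{n}^{-1/p}$, under which $\Bsih = \Btau(\rn)$. This identifies the candidate breaking data as $\dc{\bm{\tau}} = \bm{h}^{1/p}\paren{n}^{1/p}$ and $\dc{q}=1$, and my task reduces to verifying conditions \eqref{dicho-8a} and \eqref{dicho-8b} of Definition~\ref{defi-dicho-ex} for exactly this pair.

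First I would check that $(\dc{\bm{\tau}},\dc{q})$ is admissible data for the definition. Plainly $\dc{q}=1\in(0,\infty)$. For $\dc{\bm{\tau}}$ I note that $\paren{n}^{1/p}=\paren{n/p}$ is admissible, and that $\bm{h}=(h(2^{-j}))_j$ is admissible as well: since $h$ is non-decreasing one has $h(2^{-j-1})\leq h(2^{-j})$, while the doubling condition satisfied by every $h$-set (Remark~\ref{rem-doubling}) yields $h(2^{-j})\leq c\,h(2^{-j-1})$, so \eqref{adm-seq} holds. By Remark~\ref{R-adm-1}, $\bm{h}^{1/p}$ and hence the product $\dc{\bm{\tau}}=\bm{h}^{1/p}\paren{n}^{1/p}$ are admissible. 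The same remark shows that for \emph{any} admissible $\bm{\tau}$ from the scale \eqref{dicho-6} the sequence $\bm{\sigma}=\bm{\tau}\,\bm{h}^{-1/p}\paren{n}^{-1/p}$ is again admissible, so Theorem~\ref{coroleither} is applicable to it.

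Next I would carry out the index bookkeeping. Since $\frac{1}{q'}:=(1-\frac1q)_+$, taking $\dc{q}=1$ gives $\frac1v=(\frac{1}{\dc{q}}-\frac1q)_+=(1-\frac1q)_+=\frac{1}{q'}$, so $v=q'$; moreover $\bm{\tau}^{-1}\dc{\bm{\tau}}=\bm{\tau}^{-1}\bm{h}^{1/p}\paren{n}^{1/p}=\bm{\sigma}^{-1}$. Hence the membership $\bm{\tau}^{-1}\dc{\bm{\tau}}\in\ell_v$ of \eqref{dicho-8a} is precisely $\bm{\sigma}^{-1}\in\ell_{q'}$, and its negation is $\bm{\sigma}^{-1}\notin\ell_{q'}$. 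Now I fix an arbitrary admissible $\bm{\tau}$ and an arbitrary $0<q<\infty$, form $\bm{\sigma}$ as above, and invoke the main theorem: because $1\leq p<\infty$ its hypotheses are met, and Remark~\ref{remeither} states that assertion~(i) of the theorem, namely the existence of $\tr{\Gamma}\Bsih=\tr{\Gamma}\Btau(\rn)$, is equivalent to $\bm{\sigma}^{-1}\in\ell_{q'}$, while its failure forces assertion~(ii), the density of $\mathcal{D}(\rn\setminus\Gamma)$ in $\Btau(\rn)$. Reading this through $\bm{\sigma}^{-1}=\bm{\tau}^{-1}\dc{\bm{\tau}}$ and $v=q'$ gives exactly \eqref{dicho-8a} and \eqref{dicho-8b}, which is the claim.

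I expect no genuine analytic obstacle, since all the hard content sits in Theorem~\ref{coroleither} and the propositions feeding it; the only points needing care are the admissibility of $\dc{\bm{\tau}}$ (so that Definition~\ref{defi-dicho-ex} applies at all) and the identity $v=q'$, which fixes the breaking exponent at $\dc{q}=1$. As a confirming remark I would note that the breaking value $q=\dc{q}=1$ lands on the trace side, in agreement with Remark~\ref{R7.67}: there $v=q'=\infty$, so the trace persists for $\bm{\tau}^{-1}\dc{\bm{\tau}}\in\ell_\infty$, a condition satisfied in particular at $\bm{\tau}=\dc{\bm{\tau}}$.
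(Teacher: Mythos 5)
Your proposal is correct and follows exactly the route the paper takes: the corollary is stated there as a direct rephrasing of Theorem~\ref{coroleither} and Remark~\ref{remeither} under the substitution $\bm{\tau}=\bm{\sigma}\,\bm{h}^{1/p}\paren{n}^{1/p}$, which is precisely your argument, with the admissibility of $\bm{h}$ and the identity $v=q'$ spelled out more explicitly than the paper bothers to do.
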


\begin{remark}
Again, with the special setting $h(r)=r^d$, $\bm{\tau} = \paren{s}$, Corollary~\ref{Coro-2.28} coincides with Proposition~\ref{prop-dicho-Triebel}(ii) for $p\geq 1$. So one might expect some parallel result with $\dc{q}=p$ for $0<p<1$, see also \eqref{Ha-EE-d}. However, we are not yet able to (dis)prove this claim. More precisely, if $0<p<1$ and $p<q<\infty$, \eqref{dicho-8a} is satisfied with $\dc{\bm{\tau}}=\bm{h}^{1/p} \paren{n}^{1/p}$ and $\dc{q}=p$, see \eqref{ext-2-4-b}. The gap that remains at the moment is to confirm \eqref{dicho-8b} in that case (possibly with some additional assumptions), recall Conjecture~\ref{conj} and the discussion afterwards.  
\end{remark}

But we can obtain some weaker version as follows. Introducing a notion of dichotomy where also $q$ can be fixed beforehand, hence adapting accordingly  \eqref{dicho-6}, \eqref{dicho-7} and denoting the new versions respectively by $B_{p,q}(\Rn)$ and $\mathbb{D}(B_{p,q}(\rn), L_p(\Gamma)) = (\dc{\bm{\tau}}, \dc{q})$, while \eqref{dicho-8a} and \eqref{dicho-8b} are kept unchanged, we can also cast the case $0<q\leq p<1$ of Theorem~\ref{coroleither} and Remark \ref{remeither} in terms of dichotomy.

\begin{corollary}
Let $\Gamma$ be an $h$-set satisfying the porosity condition and $0<q\leq p<1$. Then
$$
\mathbb{D}(B_{p,q}(\rn), L_p(\Gamma)) = \left( \bm{h}^{1/p} \paren{n}^{1/p}, \ p
\right).
$$
\end{corollary}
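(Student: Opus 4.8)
The plan is to reduce this corollary directly to Theorem~\ref{coroleither} together with Remark~\ref{remeither}, since the hard analytic content has already been established there; what remains is essentially a bookkeeping translation into the dichotomy notation of Definition~\ref{defi-dicho-ex}, specialised to the parameter range $0<q\leq p<1$.

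First I would fix the candidate values $\dc{\bm{\tau}} = \bm{h}^{1/p}\paren{n}^{1/p}$ and $\dc{q} = p$ and check the standing requirements of the definition. That $0 < \dc{q} = p < \infty$ is immediate. For admissibility of $\dc{\bm{\tau}}$, I would first note that $\bm{h}$ is itself admissible: monotonicity of $h$ gives $h_{j+1} \leq h_j$ (upper bound with constant $1$), while Proposition~\ref{h-measfnt} applied with $k=1$ gives $h_{j+1}/h_j \geq c\,2^{-n}$ (lower bound). Then $\bm{h}^{1/p}$, $\paren{n}^{1/p}$ and their product are admissible by Remark~\ref{R-adm-1}. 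Crucially, I would then record the effect of the hypothesis $0<q\leq p<1$ on the two indices that appear: since $q\leq p$ we have $\frac1p\leq\frac1q$, so $\frac1v=\left(\frac1{\dc{q}}-\frac1q\right)_+=\left(\frac1p-\frac1q\right)_+=0$, i.e. $v=\infty$; and since $q\leq 1$ we have $q'=\infty$. Thus both the membership $\bm{\tau}^{-1}\dc{\bm{\tau}}\in\ell_v$ in \eqref{dicho-8a} and the condition \eqref{ext-2-4-a} underlying Remark~\ref{remeither} collapse to membership in $\ell_\infty$.

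Next, for an arbitrary admissible $\bm{\tau}$ I would set $\bm{\sigma}:=\bm{\tau}\,\bm{h}^{-1/p}\paren{n}^{-1/p}$, which is admissible by Remark~\ref{R-adm-1}. Then $\bm{\sigma}\,\bm{h}^{1/p}\paren{n}^{1/p}=\bm{\tau}$, so $\Bsih=\Btau(\rn)$, and $\bm{\sigma}^{-1}=\bm{\tau}^{-1}\bm{h}^{1/p}\paren{n}^{1/p}=\bm{\tau}^{-1}\dc{\bm{\tau}}$. Since the hypotheses place us in the case $0<q\leq p<1$ of Theorem~\ref{coroleither}, I may apply that theorem to $\bm{\sigma}$: exactly one of its alternatives (i)/(ii) holds, and by Remark~\ref{remeither} alternative (i) — existence of $\tr{\Gamma}\Bsih=\tr{\Gamma}\Btau(\rn)$ — is equivalent to $\bm{\sigma}^{-1}\in\ell_{q'}=\ell_\infty$, i.e. to $\bm{\tau}^{-1}\dc{\bm{\tau}}\in\ell_v$. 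This is precisely clause \eqref{dicho-8a}. In the complementary case $\bm{\tau}^{-1}\dc{\bm{\tau}}=\bm{\sigma}^{-1}\notin\ell_\infty=\ell_v$, alternative (i) fails, so by the theorem alternative (ii) holds, namely $\DG$ is dense in $\Bsih=\Btau(\rn)$; this is clause \eqref{dicho-8b}. Running over all admissible $\bm{\tau}$ (with $p,q$ fixed, as in the adapted scale $B_{p,q}(\rn)$) verifies both clauses of Definition~\ref{defi-dicho-ex}, giving $\dicho(B_{p,q}(\rn),L_p(\Gamma))=(\bm{h}^{1/p}\paren{n}^{1/p},p)$.

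I do not expect a genuine obstacle here, since all the analysis is carried by Theorem~\ref{coroleither}; the only points demanding care are the index computations showing $v=q'=\infty$ in this parameter range (so that the two conditions reduce to $\ell_\infty$-membership) and verifying that $\dc{\bm{\tau}}=\bm{h}^{1/p}\paren{n}^{1/p}$ is admissible via the monotonicity of $h$ and Proposition~\ref{h-measfnt}. A secondary subtlety worth a sentence is that the alternative in Theorem~\ref{coroleither} is exclusive — density of $\DG$ precludes a well-defined trace — which is what lets me pass from ``(i) iff $\bm{\sigma}^{-1}\in\ell_\infty$'' (Remark~\ref{remeither}) to ``(ii) iff $\bm{\sigma}^{-1}\notin\ell_\infty$'', exactly matching \eqref{dicho-8a}--\eqref{dicho-8b}.
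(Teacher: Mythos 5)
Your proposal is correct and follows exactly the route the paper intends: the paper states this corollary as an immediate recasting of the case $0<q\leq p<1$ of Theorem~\ref{coroleither} and Remark~\ref{remeither} in the dichotomy notation, which is precisely your reduction via $\bm{\sigma}:=\bm{\tau}\,\bm{h}^{-1/p}\paren{n}^{-1/p}$. Your explicit verification that $q'=v=\infty$ in this parameter range and that $\dc{\bm{\tau}}=\bm{h}^{1/p}\paren{n}^{1/p}$ is admissible supplies the bookkeeping the paper leaves implicit, so there is nothing to correct.
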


\subsection*{Acknowledgements}
It is our pleasure to thank our colleagues in the Department of
Mathematics at the University of Coimbra for their kind hospitality during our stays there.

The first author was partially supported by {\itshape
      FEDER} funds through {\itshape COM\-PETE}--Operational Programme
    Factors of Competitiveness (``Programa Operacional Factores de
    Competitividade'') and by Portuguese funds through the {\itshape Center
      for Research and Development in Mathematics and Applications}
    (University of Aveiro) and the Portuguese Foundation for Science
    and Technology (``FCT--Funda\c c\~ao para a Ci\^encia e a
    Tecnologia''), within project PEst-C/MAT/UI4106/2011 with COMPETE
    number FCOMP-01-0124-FEDER-022690 and projects PEst-OE/MAT/UI4106/2014 and \linebreak UID/MAT/04106/2013.
		The second author was also supported by the DFG Heisenberg fellowship HA 2794/1-2.



\end{document}